\theoremstyle{plain}
\newtheorem{theorem}{Theorem}[section]
\newtheorem{maintheorem}{Theorem}
\newtheorem{proposition}[theorem]{Proposition}
\newtheorem{lemma}[theorem]{Lemma}
\newtheorem{corollary}[theorem]{Corollary}
\theoremstyle{definition}
\newtheorem{definition}[theorem]{Definition}
\newtheorem{example}[theorem]{Example}
\newtheorem{remark}[theorem]{Remark}
\newcommand{\nc}{\newcommand}
\nc{\dmo}{\DeclareMathOperator}
\nc{\Q}{\mathbb{Q}}
\nc{\F}{\mathbb{F}}
\nc{\R}{\mathbb{R}}
\nc{\Z}{\mathbb{Z}}
\nc{\C}{\mathbb{C}}
\nc{\N}{\mathbb{N}}
\nc{\Ell}{\mathcal{L}}
\nc{\M}{\mathcal{M}}
\nc{\K}{\mathcal{K}}
\nc{\I}{\mathcal{I}}
\nc{\T}{\mathcal T}
\nc{\U}{\mathcal U}
\nc{\disk}{\mathbb{D}}
\nc{\hyp}{\mathbb{H}}
\nc{\CP}{\mathbb{CP}}
\nc{\RP}{\mathbb{RP}}
\dmo{\Mod}{Mod}
\dmo{\PMod}{PMod}
\dmo{\LMod}{LMod}
\dmo{\Diff}{Diff}
\dmo{\Homeo}{Homeo}
\dmo{\dist}{dist}
\dmo\BDiff{BDiff}
\dmo\SO{SO}
\dmo\Hom{Hom}
\dmo\SL{SL}
\dmo\rank{rank}
\dmo\sig{sig}
\dmo\Out{Out}
\dmo\Aut{Aut}
\dmo\Inn{Inn}
\dmo\GL{GL}
\dmo\PGL{PGL}
\dmo\Gr{Gr}
\dmo\PSL{PSL}
\dmo\BHomeo{BHomeo}
\dmo\EHomeo{EHomeo}
\dmo\EDiff{EDiff}
\dmo\Disc{Disc}
\dmo\Aff{Aff}
\renewcommand{\bar}{\overline}
\dmo\Teich{Teich}
\dmo\Fix{Fix}
\nc{\pair}[1]{\ensuremath{\left\langle #1 \right\rangle}}
\nc{\abs}[1]{\ensuremath{\left| #1 \right|}}
\nc{\action}{\circlearrowright}
\nc{\norm}[1]{\left | \left | #1 \right | \right |}
\nc{\abcd}[4]{\ensuremath{\left(\begin{array}{cc} #1 & #2 \\ #3 & #4 \end{array}\right)}}
\dmo{\Isom}{Isom}
\nc{\normal}{\vartriangleleft}
\dmo{\Vol}{Vol}
\dmo{\im}{Im}
\dmo{\Push}{Push}
\dmo{\Conf}{Conf}
\dmo{\PConf}{PConf}
\dmo{\PB}{PB}
\dmo{\id}{id}
\dmo{\Jac}{Jac}
\dmo{\Pic}{Pic}
\dmo{\Stab}{Stab}
\dmo{\Arf}{Arf}
\dmo{\End}{End}
\dmo{\Gal}{Gal}
\dmo{\lcm}{lcm}
\dmo{\ab}{ab}
\dmo{\opp}{op}
\dmo{\SU}{SU}
\dmo{\OT}{\Omega \mathcal{T}}
\dmo{\OM}{\Omega \mathcal{M}}
\dmo{\PH}{\mathbb{P}\mathcal{H}}
\dmo{\spin}{spin}
\dmo{\even}{even}
\dmo{\odd}{odd}
\dmo{\comp}{\mathcal{H}}
\dmo{\Mgk}{\mathcal{M}_{g, \underline{\kappa}}}
\dmo{\orb}{orb}
\dmo{\AJ}{AJ}
\dmo{\Ck}{\mathsf{C}(\underline{\kappa})}
\dmo{\Int}{Int}
\dmo{\pr}{pr}
\dmo{\lab}{lab}
\dmo{\Sym}{Sym}
\dmo{\Ann}{Ann}
\dmo{\Rad}{Rad}
\dmo{\Ind}{Ind}
\dmo{\Div}{Div}
\dmo{\Res}{Res}
\dmo{\Hur}{Hur}
\dmo{\vcd}{vcd}
\nc{\Span}[1]{\operatorname{Span}(#1)}
\renewcommand{\epsilon}{\varepsilon}
\renewcommand{\tilde}{\widetilde}
\renewcommand{\le}{\leqslant}
\nc{\coloneq}{\mathrel{\mathop:}\mkern-1.2mu=}
\nc{\margin}[1]{\marginpar{\scriptsize #1}}
\nc{\para}[1]{\medskip\noindent\textbf{#1.}}
\definecolor{myblue}{RGB}{102,153, 255}
\definecolor{myred}{RGB}{204,0,0}
\definecolor{mygreen}{RGB}{0,204,0}
\definecolor{myorange}{RGB}{255,102,0}
\definecolor{mypurple}{RGB}{138,43,226}
\nc{\red}[1]{\textcolor{myred}{#1}}
\nc{\blue}[1]{\textcolor{myblue}{#1}}
\nc{\B}{\mathscr B}
\nc{\MDk}{\mathcal{MD}(\kappa)}
\dmo{\interior}{int}
\dmo{\sgn}{sgn}
\title{Stratified braid groups: monodromy}
\author{Nick Salter}
\email{nsalter@nd.edu}
\address{Department of Mathematics, University of Notre Dame, Hurley Hall, Notre Dame, IN 46556}
\date{April 14, 2023}
\begin{document}
\maketitle
\begin{abstract}
The space of monic squarefree complex polynomials has a stratification according to the multiplicities of the critical points. We introduce a method to study these strata by way of the infinite-area translation surface associated to the logarithmic derivative $df/f$ of the polynomial. We determine the monodromy of these strata in the braid group, thus describing which braidings of the roots are possible if the orders of the critical points are required to stay fixed. Mirroring the story for holomorphic differentials on higher-genus surfaces, we find the answer is governed by the framing of the punctured disk induced by the horizontal foliation on the translation surface. 
\end{abstract}

\section{Introduction}
Let $\Conf_n(\C)$ denote the space of unordered configurations of $n$ distinct points in $\C$. Equivalently, this is the space of monic squarefree polynomials of degree $n$. From this point of view, $\Conf_n(\C)$ carries a natural {\em stratification} $\{\Conf_n(\C)[\kappa]\}$ indexed by partitions $\kappa$ of $n-1$. A polynomial $f \in \Conf_n(\C)$ belongs to $\Conf_n(\C)[\kappa]$ if and only if the roots of $f'$ form the partition $\kappa$ of $n-1$. Thinking of $f$ as a mapping $f: \C \to \C$, the partition $\kappa$ describes the profile of the branched covering, i.e. the multiplicities of the critical points.

\para{Main Question} Understand the topology of $\Conf_n(\C)[\kappa]$. What is the fundamental group $\B_n[\kappa]:=\pi_1(\Conf_n(\C)[\kappa])$ (a ``stratified braid group'')? Is $\Conf_n(\C)[\kappa]$ a $K(\pi,1)$ space?\\

The strata $\Conf_n(\C)[\kappa]$ admit descriptions as certain discriminant complements, and so the Main Question is reminiscent of the conjecture of Arnol'd-Pham-Thom on the homotopy type of discriminant complements associated to isolated hypersurface singularities. As we will see, $\Conf_n(\C)[\kappa]$ is also closely related to a certain stratum $\Omega_\kappa$ of meromorphic differentials on $\CP^1$, and so the Main Question is a version of the conjecture posed by Kontsevich--Zorich originally for strata of holomorphic differentials on higher-genus curves \cite{KZpre}. 

In this article, we begin this project by answering a natural question about the groups $\B_n[\kappa]$. The inclusion map $\Conf_n(\C)[\kappa] \into \Conf_n(\C)$ induces a {\em monodromy homomorphism} into the braid group $B_n := \pi_1(\Conf_n(\C))$
\[
\bar \rho: \B_n[\kappa] \to B_n,
\]
and we describe the image $B_n[\kappa]:=\bar \rho(\B_n[\kappa])$. We find (cf. \Cref{section:wnf}) that there is a {\em crossed homomorphism} $\phi_\kappa: B_n \to (\Z/r\Z)^r$, where $r = \gcd(\kappa)$ is the gcd of the parts of $\kappa$, which characterizes the monodromy image when $n \ge n_0(r)$ is sufficiently large compared to $r$. The precise bounds $n_0(r)$ are somewhat intricate and are stated in \Cref{section:generating}, but we note here that if $r = 1$ then $n_0(1) = 2$ (in which case $\phi_\kappa$ is trivial and $B_n[\kappa] = B_n$) and if $r = 2$ then $n_0(2) = 8$; in general, $n_0(r) \le 14 r$ in the worst-case scenario.

\begin{maintheorem}\label{mainthm:monodromy}
    For any $n \ge 2$ and any partition $\kappa$ of $n-1$ with $\gcd(\kappa) = r$, there is a containment
    \[
    B_n[\kappa] \le \ker(\phi_\kappa).
    \]
    If $n \ge n_0(r)$, then this containment is an equality. 
\end{maintheorem}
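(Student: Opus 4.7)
The plan is to handle the two directions separately, using the translation-surface framework flagged in the introduction. For the containment $B_n[\kappa] \le \ker(\phi_\kappa)$, I would unpack the definition of the crossed homomorphism $\phi_\kappa$ (forward-referenced to \Cref{section:wnf}) as recording mod-$r$ winding numbers of a reference family of arcs against the horizontal foliation of the translation surface associated to $df/f$. The key point is that this foliation, and hence its induced framing of the $n$-times punctured plane up to isotopy, depends only on the stratum to which $f$ belongs. Any loop $\gamma \subset \Conf_n(\C)[\kappa]$ therefore carries this framing to itself up to isotopy, so the induced braid $\bar\rho(\gamma) \in B_n$ preserves the winding-number data, i.e.\ $\phi_\kappa(\bar\rho(\gamma)) = 0$.

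For equality when $n \ge n_0(r)$, the strategy has two complementary pieces. On the group-theoretic side, I would produce an explicit generating set for $\ker(\phi_\kappa) \le B_n$ consisting of half-twists along $\phi_\kappa$-admissible arcs, together with squares of half-twists along arcs of controlled $\phi_\kappa$-class chosen to exhaust the nontrivial coset directions in $(\Z/r\Z)^r$. This is the braid-group analog of the Calderon--Salter description of stabilizers of framings inside mapping class groups, and the bound $n_0(r)$ is precisely what is needed to guarantee that such arcs exist and actually generate. On the geometric side, each generator must then be realized as the monodromy of a loop in the stratum. I would do this by constructing local moves on the translation surface: drag a single zero of $df/f$ along a closed curve whose class under $\phi_\kappa$ is zero, or swap two critical points of equal order by a Rauzy--Veech style surgery. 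Each such move stays inside $\Conf_n(\C)[\kappa]$ and induces the desired half-twist on the roots.

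The main obstacle will be the geometric realization step. The subtlety is ensuring that the surgeries never let a critical point collide with a root (which would exit $\Conf_n(\C)$) nor let two critical points merge (which would exit the stratum), while still inducing the prescribed element of $B_n$. I would expect to handle the small-$n$ base cases by exhibiting explicit model polynomials together with pictures of their horizontal foliations, and then propagate to arbitrary $n \ge n_0(r)$ by a stabilization argument that adds pairs of critical points of compatible multiplicity away from the support of the move. The intricacy of the bounds $n_0(r)$ in \Cref{section:generating} reflects the combinatorics of when the punctured disk contains enough room for a full $\phi_\kappa$-admissible generating set, and sharpness in borderline cases should be verifiable by checking directly that $\ker(\phi_\kappa)$ is not generated by twists supported on simple arcs for the omitted values of $n$.
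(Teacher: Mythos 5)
Your first direction is essentially the paper's argument (\Cref{lemma:rspinbraid} and \Cref{lemma:monodromycontain}): the winding-number function of the horizontal foliation varies continuously but discretely along a loop in the stratum, so the monodromy preserves it. That part is fine.

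The second direction contains a genuine gap, and it is located exactly where you place your confidence: the proposed generating set. Neither half-twists nor squares of half-twists lie in $\ker(\phi_\kappa)$ when $r \ge 2$. Concretely, $\phi_r(\sigma_i) = e_{i+1}$, and for a general half-twist $g\sigma_i g^{-1}$ the crossed-homomorphism identity gives $\phi_r(g\sigma_i g^{-1}) = c\,(e_{\bar g(i)} - e_{\bar g(i+1)}) + e_{\bar g(i+1)}$ for some $c \in \Z/r\Z$, which is never zero; likewise $\phi_r(g A_{i,i+1} g^{-1}) = e_{\bar g(i)} + e_{\bar g(i+1)} \ne 0$. (The paper even remarks that for $r \ge 2$ a half-twist of the roots is \emph{not} realizable by any polynomial family, so your claim that the zero-dragging and critical-point-swapping surgeries ``induce the desired half-twist on the roots'' cannot be correct.) The elements that actually generate $\ker(\phi_r)$ are the basic $(r+1)^{st}$ twists $\Sigma_{a;r} = \sigma_a\cdots\sigma_{a+r-1}$ --- fractional ($(r+1)^{st}$ roots of) Dehn twists, not half-twists --- realized geometrically by the recut/push/reorder surgeries on the strip decomposition (\Cref{lemma:monodromygens}), and then propagated to all positions via the Euclidean-algorithm trick of \Cref{lemma:gcd}. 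A Dehn-twist-only, Calderon--Salter-style generating set also cannot work here, since Dehn twists are pure while $\ker(\phi_r)$ surjects onto $A_n$ or $S_n$.

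Separately, even with the correct generators in hand, showing that they generate all of $\ker(\phi_r)$ is the hard part of the theorem and is not addressed by your outline. The paper does this by reinterpreting $\phi_r$ as the ``virtual undercrossing'' crossed homomorphism $\Upsilon_r$ and running an explicit factorization algorithm (\Cref{prop:factorize}, \Cref{lemma:factorpure}), first on the pure braid group and then in general; the bound $n_0(r)$ comes from needing enough spare strands to borrow during the factorization, not from the existence of admissible arcs. Your proposed stabilization step (adding pairs of critical points) would also change the partition $\kappa$ and hence the stratum, so it does not obviously stay within the space whose monodromy is being computed.
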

\begin{remark}
\Cref{theorem:gammakernel} gives a simple characterization of $\ker(\phi_\kappa)$ (and hence $B_n[\kappa]$) in the range $n \ge n_0(r)$: it is the subgroup of $B_n$ consisting of braids admitting representatives where at each overcrossing, there are $r$ strands passing underneath. 
\end{remark}

\begin{remark} The containment $B_n[\kappa] \le \ker(\phi_\kappa)$ is {\em not} always an equality. For $\kappa = \{n-1\}$, the fundamental group $\pi_1(\Conf_n(\C)[\kappa])$ is cyclic, since the only polynomials in this stratum are of the form $f(z) = (z-z_0)^n - c$, while the corresponding $\ker(\phi_\kappa)$ has finite index in $B_n$. On the other hand, the braid-theoretic methods underlying the proof of \Cref{mainthm:monodromy} are almost certainly not optimized, and it would be interesting to know exactly how small $n_0(r)$ can be taken.
\end{remark}

\begin{remark}[The braided Gauss-Lucas theorem]
The classical Gauss-Lucas theorem asserts that the critical points lie inside the convex hull of the roots. There is a refinement $\rho: \B_n[\kappa] \to B_{n, \abs{\kappa}}$ of $\bar \rho$ where one tracks both roots and critical points, and the study of $\rho$ can be viewed as a Gauss-Lucas theorem for {\em families} of polynomials. The classical theorem implies that every braid in the image of $\rho$ admits a representative where at each time $t$, the critical points lie in the convex hull of the roots. Our study of the monodromy shows that {\em this is not sufficient}. \Cref{figure:nonreal} in \Cref{subsection:braidedGL} gives an example of a braid satisfying this convexity condition which is not realizable as the braid of root and critical points of any family of polynomials. \Cref{mainthm:monodromy} shows that when $r \ge 2$, there are even certain braidings of the roots alone (e.g. a half-twist) which cannot be realized by polynomial families. We plan to return to a study of the refined monodromy $\rho$ in future work.
\end{remark}

\para{From polynomials to translation surfaces} Our method of study is built around a type of uniformization map. Namely, we associate to $f \in \Conf_n(\C)[\kappa]$ its logarithmic derivative $df/f$. This is a meromorphic differential on $\CP^1$, with $n$ simple poles of residue $2 \pi i$ at the zeroes of $f$ and an additional simple pole of residue $-2\pi i n$ at infinity. Such an object can be viewed as a translation surface - the poles give the surface infinite area, but it nevertheless has a very simple global structure (see, e.g. \Cref{figure:sd1}). Let $\Omega_\kappa$ denote the moduli space of meromorphic differentials on $\CP^1$ with $n$ simple poles of residue $2 \pi i$, a simple pole at infinity (necessarily of residue $-2\pi i n$), and zeroes of multiplicity specified by $\kappa$. Some elementary complex analysis (\Cref{lemma:corresp}) shows that every such differential is of the form $df/f$ for $f \in \Conf_n(\C)[\kappa]$. The assignment $f \mapsto df/f$ therefore gives a classifying map 
\[
\mu: \Conf_n(\C)[\kappa] \to \Omega_\kappa.
\]
It is clear that if $f(z)$ and $g(z)$ are related by an affine change of variables $g(z) = f(az+b)$, then the associated differentials $df/f$ and $dg/g$ determine the same point in $\Omega_\kappa$. The converse is not much harder, but this identification is fundamental to our approach, and we record it here for good measure.

\begin{theorem}\label{mainthm:iso}
    The classifying map $\mu$ induces an isomorphism of complex orbifolds
    \[
    \Conf_n(\C)[\kappa]/\Aff \cong \Omega_\kappa.
    \]
\end{theorem}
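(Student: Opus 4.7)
The plan is to establish the theorem in three stages: show that $\mu$ descends to a well-defined map $\bar \mu: \Conf_n(\C)[\kappa]/\Aff \to \Omega_\kappa$, check that $\bar \mu$ is a bijection on underlying point sets, and finally upgrade to an isomorphism of complex orbifolds by verifying holomorphicity and matching isotropy groups.

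Well-definedness is a direct computation. If $\phi(z) = az+b \in \Aff$, let $\phi \cdot f(z) := a^{-n} f(\phi(z))$ be the natural (monic) representative of the affine translate of $f$. Then
\[
\mu(\phi \cdot f) \;=\; d \log(f \circ \phi) \;=\; \phi^*\!\left(\frac{df}{f}\right),
\]
which represents the same point in $\Omega_\kappa$ as $\mu(f)$, since $\Omega_\kappa$ is by definition the quotient by biholomorphisms of $\CP^1$ preserving the pole/zero data. Surjectivity of $\bar \mu$ onto $\Omega_\kappa$ is precisely \Cref{lemma:corresp}: given $\omega \in \Omega_\kappa$, integrality of the residues in $2\pi i \Z$ makes $\exp\!\int \omega$ single-valued on $\C$, and the pole/zero profile forces it to be a scalar multiple of some $f \in \Conf_n(\C)[\kappa]$.

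The crux is injectivity. Suppose $\mu(f_1)$ and $\mu(f_2)$ represent the same point of $\Omega_\kappa$, so $\phi^*(df_1/f_1) = df_2/f_2$ for some automorphism $\phi$ of $\CP^1$. The key observation is that for $n \ge 2$ the pole at $\infty$ is distinguished from the finite poles by its residue $-2\pi i n \ne 2\pi i$, so $\phi$ must fix $\infty$ and hence is affine. Integrating $d\log(f_1 \circ \phi) = d\log f_2$ on $\C$ yields $f_2 = c\,(f_1 \circ \phi)$ for some $c \in \C^*$; comparing leading coefficients of $z^n$ in the two monic polynomials pins down $c = a^{-n}$, so $f_2 = \phi \cdot f_1$ in the action above. (The case $n = 1$ is trivial as both sides reduce to a point.)

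To upgrade $\bar \mu$ to an isomorphism of complex orbifolds, I would invoke three observations: $\mu$ is manifestly holomorphic (its formula is rational in the coefficients of $f$); the $\Aff$-action on $\Conf_n(\C)[\kappa]$ is holomorphic with finite stabilizers given by the affine self-symmetries of a root configuration; and $\Omega_\kappa$ carries its standard complex orbifold structure via period coordinates, with isotropy at $\omega$ equal to the group of affine automorphisms of $\CP^1$ preserving $\omega$. The injectivity argument further identifies these two isotropy groups under $\bar \mu$, so $\bar \mu$ is a holomorphic bijection between equidimensional complex orbifolds respecting isotropy, hence a biholomorphism. The step that most requires care is this matching of orbifold structures and stabilizers; once the bijection of underlying points is established it is essentially formal, given that $\Omega_\kappa$ is defined exactly as the quotient of marked differentials by the pullback action that corresponds, under $\mu$, to the $\Aff$-action on polynomials.
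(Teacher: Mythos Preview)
Your proof is correct and follows essentially the same route as the paper. The only organizational difference is that the paper works upstairs: it shows $\mu: \Conf_n(\C)[\kappa] \to \MDk$ is an $\Aff$-equivariant biholomorphism (bijectivity coming directly from both the existence and uniqueness clauses of \Cref{lemma:corresp}), and then passes to the quotient, whereas you descend first and argue bijectivity of $\bar\mu$ at the quotient level. Your extra step showing that any $\phi \in \Aut(\CP^1)$ relating the two differentials must fix $\infty$ is unnecessary under the paper's conventions, since $\Omega_\kappa$ is \emph{defined} as $\MDk/\Aff$ (with the pole at $\infty$ already pinned), but the observation is correct and harmless.
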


The advantage in studying $\Omega_\kappa$ is that its global structure is much more apparent. The equations defining $\Conf_n(\C)[\kappa]$ as a discriminant complement are highly nonlinear, and it is difficult to construct and analyze the behavior of explicit loops inside $\Conf_n(\C)[\kappa]$. On the other hand, in $\Omega_\kappa$ the corresponding analysis is elementary via deformations of the associated translation surfaces. Moreover, in \Cref{proposition:cellstruct}, we use the combinatorics of the translation surfaces to obtain an explicit finite cell structure on $\Omega_\kappa$, in principle reducing the study of the topology of $\Omega_\kappa$ to the combinatorics of the ``labeling systems'' that index the cells.\\

The meromorphic differential $df/f$, and more precisely its incarnation as an infinite-area translation surface, plays a fundamental role in our analysis of the monodromy. \Cref{mainthm:monodromy} exactly parallels a result in the setting of strata of holomorphic differentials on higher-genus surfaces. Here, the problem is to determine the image of the orbifold fundamental group in the mapping class group of the surface. This was answered in \cite{strata3}, where it is shown that the image is essentially characterized by the property that the monodromy must preserve the {\em framing} of the surface (punctured at the locations of the zeros) associated to the horizontal vector field specified by the translation surface structure. In the case where the locations of the zeroes are not marked, the monodromy must preserve a certain distillate of the framing known as an {\em $r$-spin structure}, c.f. \cite{strata2}. Here, we find the exact same sort of characterization of the monodromy: the crossed homomorphism $\phi_\kappa$ measures a ``change in winding number'' of arcs relative to the framing of the punctured surface induced from $df/f$. In both of these settings, the integer $r$ is given as the gcd of the orders of the zeroes of the differential.

\para{Related work} The space $\Omega_\kappa$ fits into the theory of the ``isoresidual fibration'' studied by Gendron--Tahar \cite{gt,gt2}. They consider the map from the space of meromorphic differentials with prescribed zero and pole orders to the vector space of residues, showing among other things that in the case of a single zero, the map is a fibration away from a hyperplane arrangement. Our $\Omega_\kappa$ is the fiber of the isoresidual map of differentials on $\C$ over the vector of residues $(2 \pi i, \dots, 2\pi i)$, where the zeroes have order specified by $\kappa$.

In \cite{mcd}, Dougherty--McCammond investigate various combinatorial structures induced from polynomial maps. One of their key tools is a pair of transverse singular foliations on $\C$ with singularities at the zeroes and critical points of $f$. We obtain an equivalent pair of foliations from the horizontal and vertical foliations of the translation surface structure on $\C$ induced by $df/f$. See \Cref{remark:dm}.

\para{Outline} In \Cref{section:uniformization}, we establish \Cref{mainthm:iso}, showing that one can study polynomials in a stratum $\Conf_n(\C)[\kappa]$ by instead studying the translation surfaces associated to their logarithmic derivatives. In \Cref{section:tslsurf}, we describe the structure of an individual $df/f$ as a translation surface, as well as the global structure of the stratum $\Omega_\kappa$. Our main results here are the discussion in \Cref{subsection:stripdecomp} of the ``strip decomposition'' of $df/f$, and the global structure theorem \Cref{proposition:cellstruct}, which exhibits a cell structure on $\Omega_\kappa$ coming from the combinatorics of this decomposition. The proof of \Cref{mainthm:monodromy} is carried out in \Cref{section:wnf,section:construct,section:generating}. In \Cref{section:wnf}, we show how the translation surface structure associated to $df/f$ constrains the monodromy image $B_n[\kappa]$, forcing it to preserve {\em winding numbers} of arcs on the disk. In \Cref{section:construct}, we exhibit certain loops in $\Omega_\kappa$ and analyze their monodromies in $B_n$. Finally in \Cref{section:generating}, we show that this finite collection of elements is enough to generate the kernel of $\phi_\kappa$ (when $n$ is sufficiently large). The key tool here is to relate the winding number crossed homomorphism $\phi_\kappa$ to an {\em a priori} totally different crossed homomorphism $\Upsilon_r$ formulated in terms of a count of ``virtual undercrossings'' on a braid diagram, and then to establish a {\em factorization algorithm} (\Cref{lemma:factorpure}) for expressing the kernel of $\Upsilon_r$ in terms of elements known to lie in the monodromy image.

\para{Acknowledgements} The author would like to thank Tara Brendle and Matt Day for interesting discussions, and Dan Margalit for very helpful feedback and for alerting the author to the work \cite{mcd} of Dougherty--McCammond. The author is supported by NSF Award No. DMS-2153879.

\section{Moduli spaces of polynomials, differentials, and translation surfaces}\label{section:uniformization}

We begin with a discussion of the space $\Omega_\kappa$, the stratum of translation surfaces associated to the differentials $df/f$. We construct this here as a moduli space, by taking a quotient of the space of differentials by the relevant automorphism group. The main result of this section is \Cref{mainthm:iso}, recorded here as \Cref{prop:corresp}, which amounts to little more than an unpacking of the definitions, but lays the foundation for what is to follow, as it will allow us to explore the space $\Conf_n(\C)[\kappa]$ by instead exploring the space $\Omega_\kappa$ of translation surfaces.

Let $\kappa = \{k_1, \dots, k_p\}$ be a partition of $n-1$. Here and throughout, we write $\abs{\kappa} = p$ to denote the number of parts of the partition. Let $\MDk$ be the set of meromorphic differential forms $\omega$ on $\CP^1$ satisfying the following properties:
\begin{itemize}
    \item There are exactly $p$ zeroes of $\omega$ of orders $k_1, \dots, k_p$, and each zero lies in $\C \subset \CP^1$,
    \item There are $n$ simple poles each of residue $2 \pi i$ contained in $\C$, and an additional simple pole at $\infty$ of residue $-2\pi i n$.
\end{itemize}

The following is basic complex analysis; we include the argument for the sake of completeness. 
\begin{lemma}\label{lemma:corresp}
Let $\omega \in \MDk$ be given. Then there is a unique $f \in \Conf_n(\C)[\kappa]$ such that $\omega = \frac{df}{f}$.
\end{lemma}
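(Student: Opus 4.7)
The plan is to construct $f$ explicitly by exponentiating a primitive of $\omega$. Fix a basepoint $z_0 \in \C$ away from the poles of $\omega$ and a nonzero constant $c$ to be specified, and set
\[
f(z) = c \cdot \exp\left( \int_{z_0}^z \omega \right),
\]
with path of integration in $\C$ avoiding the poles of $\omega$. The first task is to check that $f$ is single-valued on $\C$ minus the poles: the multivaluedness of the integral is generated by loops encircling the simple poles, and each such loop contributes the residue $2\pi i$ (in the convention of the paper, where ``residue'' denotes $\oint \omega$), which is killed by the exponential.

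Next I would examine the local behavior to show that $f$ extends to a polynomial of degree exactly $n$ with the prescribed roots. Near a simple pole $z_i \in \C$ of $\omega$, a local primitive has the form $\log(z-z_i) + h(z)$ with $h$ holomorphic, so exponentiating yields $(z-z_i)e^{h(z)}$, which extends holomorphically across $z_i$ with a simple zero there. Hence $f$ extends to an entire function with simple zeros exactly at $z_1, \dots, z_n$ and no others. At $\infty$, using the coordinate $w = 1/z$ together with the residue $-2\pi i n$, an analogous calculation shows $f$ has a pole of order $n$ there, so $f$ is a polynomial of degree exactly $n$. Choosing $c$ appropriately makes $f$ monic.

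Finally I would match the critical profile of $f$ to $\kappa$ and deduce uniqueness. Since $\omega = df/f$ and $f$ is nonvanishing off $\{z_1,\dots,z_n\}$, the zeros of $\omega$ in $\C$ coincide with those of $f'$, with matching multiplicities; thus the critical points of $f$ have orders $k_1,\dots,k_p$, so $f \in \Conf_n(\C)[\kappa]$. For uniqueness, any other $g \in \Conf_n(\C)[\kappa]$ with $dg/g = \omega$ satisfies $d\log(f/g) = 0$, so $f/g$ is a constant; and both being monic of the same degree forces $f = g$.

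This proof is essentially an unpacking of the definitions and I expect no substantial obstacle. The one point requiring real care is compatibility of conventions: the hypothesis that each finite-pole residue equals $2\pi i$ (rather than $1$) is precisely what makes the exponential of a primitive single-valued, and the residue $-2\pi i n$ at infinity is exactly what forces $\deg f = n$.
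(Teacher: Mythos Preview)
Your argument is correct and complete. It differs from the paper's proof in an interesting way: rather than constructing $f$ by integration, the paper simply \emph{guesses} $f$ as the monic polynomial with simple zeros at the finite poles $z_1,\dots,z_n$ of $\omega$, writes $df/f = \sum_i \frac{dz}{z-z_i}$ by partial fractions, and observes that $\omega - df/f$ is then a holomorphic differential on all of $\CP^1$ (the principal parts at every pole, including $\infty$, cancel), hence zero. Your route inverts the map $f \mapsto d\log f$ directly via $f = c\exp\bigl(\int\omega\bigr)$ and reads off the zero/pole orders from the local primitives. The paper's approach is shorter and sidesteps any discussion of monodromy of the integral, at the cost of invoking the fact that $\CP^1$ carries no nonzero holomorphic $1$-forms; your approach is more intrinsic and makes transparent \emph{why} the residue hypotheses (each finite residue $=2\pi i$, residue at $\infty = -2\pi i n$) are exactly what is needed, since they are precisely the conditions making $\exp\bigl(\int\omega\bigr)$ single-valued and of polynomial growth of order $n$.
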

\begin{proof}
Let $f \in \Conf_n(\C)$ be the polynomial with simple roots at the $n$ poles $z_1, \dots, z_n$ of $\omega$ contained in $\C$. By the theory of partial fractions,
\[
\frac{df}{f} = \left(\frac{1}{z-z_1} + \dots + \frac{1}{z-z_n}\right) dz,
\]
on $\C$, showing that $\omega - \frac{df}{f}$ has no poles on $\C$. By hypothesis, $\omega$ and $\frac{df}{f}$ have simple poles at $\infty$ of equal residue, so that $\omega - \frac{df}{f}$ is moreover holomorphic in a neighborhood of $\infty$. Thus $\omega - \frac{df}{f}$ is a {\em holomorphic} differential form on $\CP^1$; the only such form is $0$ (see, e.g. \cite[Exercise IV.1.A]{miranda}).
\end{proof}

Observe that the {\em affine group} 
\[
\Aff = \{\alpha \in \Aut(\CP^1) \mid \alpha (\infty) = \infty\} = \{az+b \mid a \in \C^*,\ b \in \C\} \cong \C^*\ltimes \C
\]
acts via biholomorphisms on $\MDk$ on the left via inverse-pullback: 
\[
\alpha \cdot \omega = (\alpha^{-1})^*(\omega).
\]
Likewise, there is a left action of $\Aff$ on $\Conf_n(\C) \subset \C^n$ induced from the diagonal action on $\C^n$. 

Note that for each of these actions, $\Aff$ is a Lie group acting properly by holomorphic automorphisms with finite stabilizers. The orbit spaces $\Conf_n(\C)[\kappa]/\Aff$ and $\mathcal{MD}(\kappa)/\Aff$ therefore carry complex orbifold structures. We observe that $\dim_\C(\Conf_n(\C)[\kappa]) = \abs{\kappa}+1$, since the roots of $f'$ move in a configuration space of dimension $\abs{\kappa}$, and the generic antiderivative of $f'$ has $n$ distinct roots. Thus $\dim_\C(\Conf_n(\C)[\kappa]/\Aff) = \abs{\kappa}-1$.

We define the {\em $\kappa$-stratum of logarithmic derivatives} as the second of the orbifolds discussed above:
\[
\Omega_\kappa := \mathcal{MD}(\kappa)/\Aff.
\]
Observe that there is a natural map
\begin{align*}
\mu: \Conf_n(\C)[\kappa] &\to \MDk\\
    f &\mapsto \frac{df}{f}.
\end{align*}

\begin{proposition}[\Cref{mainthm:iso}]
\label{prop:corresp}
The map $\mu$ is an $\Aff$-equivariant biholomorphism, inducing an isomorphism of complex orbifolds of dimension $\abs{\kappa}-1$.
\[
\Conf_n(\C)[\kappa]_{\Aff} \cong \Omega_\kappa.
\]
\end{proposition}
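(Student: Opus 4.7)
The plan is to verify that $\mu$ is an $\Aff$-equivariant biholomorphism between $\Conf_n(\C)[\kappa]$ and $\MDk$, and then simply pass to orbifold quotients. My first step would be to confirm that $\mu$ lands in $\MDk$: given $f \in \Conf_n(\C)[\kappa]$ with simple roots $z_1,\dots,z_n$, the partial-fraction expansion $df/f = \sum_{i=1}^n dz/(z-z_i)$ displays each finite pole as simple with residue $2\pi i$; since $f$ and $f'$ share no zeros, the zero partition of $df/f$ matches $\kappa$ exactly; and the residue theorem gives residue $-2\pi i n$ at $\infty$. Bijectivity of $\mu$ is then essentially already in hand: surjectivity is \Cref{lemma:corresp}, and for injectivity I would observe that $df/f = dg/g$ forces $d\log(f/g)=0$, so $f/g$ is locally constant, and monicness of $f$ and $g$ pins this constant to $1$.

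My next step would be to check $\Aff$-equivariance. Writing $\alpha(z) = az+b$, the diagonal action sends a monic $f$ with roots $z_1,\dots,z_n$ to the monic polynomial $g$ with roots $\alpha(z_i)$. A short computation would give $g(z) = a^n\, f(\alpha^{-1}(z))$, and taking logarithmic derivatives (noting $d\log(a^n)=0$) would yield $dg/g = (\alpha^{-1})^*(df/f)$, which is exactly the inverse-pullback action on $\MDk$.

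Holomorphy of $\mu$ is immediate from the partial-fraction formula, which displays the coefficients of $df/f$ as holomorphic functions of the root locations; holomorphy of the inverse, sending a differential $\omega$ to the monic polynomial whose roots are the finite poles of $\omega$, is equally clear from Vieta. Once $\mu$ is known to be an $\Aff$-equivariant biholomorphism, passing to quotients by the proper $\Aff$-action (with finite stabilizers, as already recorded in the paper) would produce the desired isomorphism of complex orbifolds of dimension $|\kappa|-1$. I do not expect a serious obstacle here: the statement is essentially a definition-chase, and the only step requiring any care is tracking conventions in the equivariance calculation, where the $a^n$ scaling of $g$ should cancel against the pullback of $dz$ to yield precisely the inverse-pullback action on differentials.
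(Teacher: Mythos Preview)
Your proposal is correct and follows essentially the same approach as the paper's proof, which is quite terse: the paper cites \Cref{lemma:corresp} for bijectivity (note that the \emph{uniqueness} clause there already handles injectivity, so your separate $d\log(f/g)=0$ argument is redundant but harmless), declares holomorphy evident, and verifies equivariance via the one-line identity $(\alpha^{-1})^*(df/f) = d(f\circ\alpha^{-1})/(f\circ\alpha^{-1})$. Your version simply fills in more of the routine checks the paper leaves implicit.
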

\begin{proof}
    That $\mu$ is a bijection follows immediately from \Cref{lemma:corresp}, and it is easy to see that this respects the complex structures on the domain and codomain. Equivariance is also easily verified, as
    \[
    (\alpha^{-1})^* \frac{df}{f} = \frac{d (f\circ \alpha^{-1})}{f \circ \alpha^{-1}},
    \]
     and the polynomial $f \circ \alpha^{-1}$ has simple roots at the points $\alpha(z_1), \dots, \alpha(z_n)$, where $z_1, \dots, z_n$ are the roots of $f$. 
    \end{proof}


    
\para{An exact sequence} To conclude this section, we study the relationship between the (orbifold) fundamental groups of $\Conf_n(\C)[\kappa]$ and its quotient $\Conf_n(\C)[\kappa]/\Aff \cong \Omega_\kappa$. Following the discussion in \cite[Introduction]{looijenga}, we find that there is an exact sequence
\[
\pi_1(\Aff) \to \pi_1(\Conf_n(\C)[\kappa]) \to \pi_1^{orb}(\Conf_n(\C)/{\Aff}) \to \pi_0(\Aff) \to 1.
\]
Recalling that $\pi_1(\Aff) = \Z$ and $\pi_0(\Aff) = 1$, and also recalling that $\pi_1(\Conf_n(\C)[\kappa]):= \B_n[\kappa]$, we obtain the exact sequence
\begin{equation}
    \label{equation:les}
    \Z \to \B_n[\kappa] \to \pi_1^{orb}(\Omega_\kappa) \to 1.
\end{equation}
In particular, we emphasize that the projection $\B_n[\kappa] \to \pi_1^{orb}(\Omega_\kappa)$ is surjective. It is not hard to show that \eqref{equation:les} is in fact short exact, but we do not need this fact here so we will not elaborate.

\section{$\Omega_\kappa$ as a space of translation surfaces}\label{section:tslsurf}

The purpose of this section is to explain the structure of a differential $df/f$ when realized as a translation surface. In \Cref{subsection:foliation}, we begin with a discussion of some generalities of translation surfaces induced by meromorphic differentials and the induced horizontal foliation. In \Cref{subsection:stripdecomp}, we discuss the notion of a {\em strip decomposition} of the translation surface for $df/f$ and some important related notions ({\em strips, slits, fixed/free prongs}). This will give a combinatorial decomposition of $\Omega_\kappa$ into cells; in \Cref{subsection:global}, we discuss the global structure of this decomposition. In the body of this paper, we will only make use of the {\em constructive} aspects of the theory we establish here (as a technique for exploring the space $\Omega_\kappa$ and computing the monodromy of loops); in later work, we hope to make use of the global structure theory obtained in \Cref{proposition:cellstruct}. 

\subsection{Flat cone metrics and the horizontal foliation}\label{subsection:foliation} The integration map
\[
z \mapsto \int_{z_0}^z \frac{df}{f}
\]
provides a system of holomorphic charts on $\C$ away from the zeroes of $f$ and $f'$ for which the transition functions are translations $z \mapsto z+c$. The horizontal foliation on $\C$ given by lines of constant real part (equivalently determined as the kernel of the real $1$-form $dy = \im(dz)$) pulls back to a singular foliation $\mathcal F$ on the domain. 

Near a zero $\zeta_i$ of order $k_i$, these charts realize $\zeta_i$ as a cone point with cone angle $2\pi(k_i + 1)$. At such a point $\mathcal F$ has a prong singularity of order $2k_i+2$. In the flat coordinates, the prongs alternate between pointing to the right and left and will be referred to as such. We also note that there is a natural cyclic ordering on both the left and right prongs, and each set of prongs carries the structure of a torsor over $\Z/(k_i+1)\Z$ by measuring the counterclockwise angle from one prong to the other.

The local structure near a simple pole is slightly less well-known, but is equally straightforward. First note that in the case of $\omega = dz/z$, the integration map (i.e. the logarithm) sends the punctured disk $0<\abs{z} \le 1$ to the half-infinite strip 
\[
S^L= \{a+bi \mid a \le 0,\ 0 \le b \le 2 \pi\}
\]
with the top and bottom identified via the translation $z \mapsto z + 2 \pi i$; likewise, for $c \in \C^*$, the integration map sends a neighborhood of $c\, dz/z$ near $0$ to the rotated strip $c S^L$. For a general differential $\omega = g(z) dz /z$ with a simple pole at $z = 0$, the coordinate 
\[
w(z) = \exp(\int_{z_0}^z \omega)
\]
pulls $\omega$ back to $dw/w$, showing that in general, a neighborhood of a simple pole of residue $c$ is realized on the translation surface via the rotated strip $c S^L$ (again with opposite edges identified). At such a pole, $\mathcal F$ has an ``infinite prong singularity'', where the foliation structure is locally given by the set of rays emanating from a point.

Given a translation surface $T$ represented as a finite collection of disjoint polygons $\{P_i\}\subset \C$ with edge identifications, a {\em cut move} is a subdivision of some $P_i$ into $P_i^1, P_i^2$ along with the identification of the cut edges. To perform a {\em paste move}, take distinct polygons $P_i, P_j$ for which there is an edge of $P_i$ identified to an edge of $P_j$, and translate $P_j$ so that the identified edges coincide. If $P_i$ and $P_j$ overlap only along this edge, then the paste move can be performed by joining $P_i$ and the translate of $P_j$ into a single polygon, inheriting the remaining edge identifications. It is a basic fact in the theory of translation surfaces that $T$ and $T'$ determine the same point in their stratum (in this case, $\Omega_\kappa$) if and only if they are related by a sequence of cut/paste moves.

\para{The horizontal foliation for $\boldmath{df/f}$} The integration map $\int df/f$ induces a translation surface structure on an $n$-times punctured plane, for which the horizontal foliation has the local features discussed above. Conversely, any ``combinatorially suitable'' translation surface structure $T$ on an $n$-times punctured plane determines a differential $\omega = \frac{df}{f} \in \Omega_\kappa$. Here, by ``combinatorially suitable'', we mean the following:
\begin{itemize}
    \item $T$ has $n$ half-infinite cylindrical strips $S_1^L, \dots, S_n^L$, each equivalent to $S^L$ via a translation (each extends infinitely far to the left and has height $2 \pi i$),
    \item $T$ has one half-infinite cylindrical strip $S_\infty$ equivalent to $-n S^L$ via a translation (thus extending infinitely far to the right and of height $2 n \pi i$),
    \item $T$ has cone points $p_1, \dots, p_m$ of orders $k_1, \dots, k_m$, where $\kappa = \{k_1, \dots, k_m\}$,
    \item The complement of the strips $S_1^L, \dots, S_n^L, S_\infty$ has finite area.
\end{itemize}
That every such translation surface is induced by a differential $df/f \in \Omega_\kappa$ is immediate: $T$ induces a Riemann surface structure on an $n$-times punctured plane, equipped with a differential $\omega$ on which $S_1^L, \dots, S_n^L$ correspond to simple poles of residue $2 \pi i$, $S_\infty$ corresponds to a simple pole of residue $-2n \pi i$, and which has zeroes of multiplicity specified by $\kappa$; i.e. $\omega \in \Omega_\kappa$.\\

The global structure of the horizontal foliation $\mathcal F$ on $\CP^1$ induced by $df/f$ is extremely simple.

\begin{lemma}
    \label{lemma:horizfol}
    Let $f \in \Conf_n(\C)[\kappa]$ be given, and let $\mathcal F$ be the horizontal singular foliation on $\CP^1$ induced by $df/f$. Then $\mathcal F$ has the following properties:
    \begin{itemize}
        \item With the finitely many exceptions of leaves incident to a critical point of $f$, every leaf connects a zero of $f$ to $\infty$. In particular, $\mathcal F$ has no closed leaves.
        \item Let $p_j$ be a critical point of order $k_j$, corresponding to a cone point of order $k_j$ on the translation surface and inducing a $2 k_j + 2$-pronged singularity of $\mathcal F$. Then the prongs alternate between terminating at a zero of $f$ and at $\infty$. In particular, at most one prong at $p_j$ terminates at each zero of $f$.
    \end{itemize}
    \end{lemma}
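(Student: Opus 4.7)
The plan hinges on a single observation: locally $\int df/f = \log f$, so the horizontal foliation $\mathcal F$ (defined by constancy of $\im \int df/f$) coincides with the foliation of $\C$ by level sets of $\arg(f)$. Equivalently, each horizontal leaf is a connected component of the preimage $f^{-1}(R_\theta)$, where $R_\theta = \{re^{i\theta} : r \ge 0\} \subset \C$ is the ray from $0$ to $\infty$ at angle $\theta$. Under this dictionary, both bullets of the lemma reduce to elementary facts about the polynomial map $f: \CP^1 \to \CP^1$.

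For the first bullet, I would argue as follows. For a generic angle $\theta$ such that $R_\theta$ avoids the finitely many critical values $w_j := f(p_j)$, the restriction $f: f^{-1}(R_\theta) \to R_\theta$ is an unramified covering; since $R_\theta$ is simply connected, this covering is trivial and splits into $n$ sheets, each mapping homeomorphically onto $R_\theta$ and extending uniquely in $\CP^1$ to a closed arc with endpoints at a zero of $f$ (the preimage of $0$) and at $\infty$. The remaining finitely many rays carrying a critical value account precisely for the exceptional leaves incident to a critical point. For the absence of closed leaves, I would observe that any closed curve $\gamma$ avoiding the poles of $df/f$ has period $\int_\gamma df/f \in 2\pi i \Z$ by the residue theorem, while a closed horizontal trajectory has positive real period equal to its length; these are incompatible.

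For the prong structure at a critical point $p_j$, I would work locally via the expansion $f(z) - w_j = c_j(z-p_j)^{k_j+1} + O((z-p_j)^{k_j+2})$. Writing $z - p_j = re^{i\alpha}$, the condition $\arg f(z) \equiv \arg w_j \pmod{2\pi}$ forces $c_j(z-p_j)^{k_j+1}/w_j$ to be real to leading order, i.e., $(k_j+1)\alpha \equiv \arg(w_j/c_j) \pmod \pi$. This has $2(k_j+1)$ solutions $\alpha \in [0, 2\pi)$, splitting into two families of $k_j+1$ equally spaced directions according to whether $c_j(z-p_j)^{k_j+1}/w_j$ is positive (so $f$ moves outward from $w_j$ along $R_\theta$ toward $\infty$) or negative (so $f$ moves inward toward $0$). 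The two families are offset by $\pi/(k_j+1)$ and so interleave in cyclic order, proving the alternating claim.

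The main obstacle is the ``at most one prong per zero'' statement, which I would prove by contradiction using a global complex-analytic argument. Suppose two toward-zero prongs $\gamma_1, \gamma_2$ at $p_j$ both terminate at the same zero $z_i$. Since distinct leaves of $\mathcal F$ are disjoint away from singularities, $\gamma_1 \cup \gamma_2$ is a Jordan curve in $\C$ bounding a topological disk $D \subset \C$ disjoint from $\infty$. Along $\partial D$ the holomorphic function $f$ takes values in the compact segment $[0, w_j] \subset R_\theta$, so for every $w \in \C \setminus [0, w_j]$ the winding number of $f|_{\partial D}$ about $w$ vanishes; by the argument principle, $w \notin f(D)$, whence $f(D) \subseteq [0, w_j]$. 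This contradicts the open mapping theorem applied to the nonconstant holomorphic function $f$ on the nonempty open disk $D$, completing the proof.
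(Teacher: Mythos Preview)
Your proof is correct and takes a genuinely different route from the paper's.

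The paper argues entirely on the translation-surface side: it rules out closed or recurrent leaves by observing that all periods of $df/f$ are purely imaginary, then uses monotonicity of $\Re\int df/f$ along a leaf to see that each leaf has one end at a zero and one at $\infty$, and that consecutive prongs alternate. For the ``at most one prong per zero'' claim, the paper encloses a region between two such prongs and notes that some outward prong inside that region would have to reach $\infty$ but cannot escape---a purely topological trapping argument.

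Your approach instead exploits the identification $\mathcal F = \{\arg f = \text{const}\}$ and works directly with the branched cover $f:\CP^1\to\CP^1$. The covering-space argument over a generic ray $R_\theta$ gives the structure of the generic leaf in one stroke (and implicitly rules out recurrence), while your local Taylor computation makes the alternation of prongs explicit. Your resolution of the ``one prong per zero'' statement via the argument principle and the open mapping theorem is a nice complex-analytic alternative to the paper's trapping argument: both produce the same Jordan curve $\gamma_1\cup\gamma_2$, but you observe $f(\partial D)\subset[0,w_j]$ and derive a contradiction from the open mapping theorem, whereas the paper finds an outward prong stuck inside $D$.

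What each approach buys: the paper's argument stays within the translation-surface language used throughout the rest of the article and requires essentially no local computation. Your argument is more self-contained (it would be readable by someone who knows complex analysis but not translation surfaces), and the identification of $\mathcal F$ with level sets of $\arg f$ recovers exactly the Dougherty--McCammond viewpoint the paper mentions in Remark~\ref{remark:dm}. One small point: your local prong analysis shows that prongs alternate between ``$|f|$ increasing'' and ``$|f|$ decreasing'' initially; to match the lemma's phrasing about \emph{termination}, you are implicitly using that $\log|f|=\Re\int df/f$ is monotone along the entire leaf, which is worth stating explicitly (and also handles the edge case of a prong terminating at another critical point).
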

\begin{proof}
    We first claim that $\mathcal F$ has no closed leaves. Integration of $df/f$ along such a leaf would yield a real period of $df/f$, but the periods of $df/f$ are purely imaginary. If a leaf does not terminate at a singularity, it must accumulate somewhere on the compact space $\CP^1$. Such a nearly-closed leaf can be completed via a short vertical segment into a simple closed curve whose period has positive real part, again a contradiction. Thus every leaf must terminate at both ends at a singularity of $\mathcal F$. At a critical point of order $k$, $\mathcal F$ has a $2k+2$-pronged singularity, so that there are finitely many leaves terminating at a critical point as claimed. Integrating $df/f$ along a path terminating at a zero of $f$ has real part tending to $-\infty$, so that at most one end of every leaf can terminate at such a point; likewise at most one end can terminate at $\infty$. 
    
    This same observation proves the second assertion: when integrating along consecutive prongs, the real part of $\int df/f$ is {\em monotonic}, so that exactly one prong in each consecutive pair terminates at a zero of $f$. If two prongs at $p_j$ terminate at the same zero, we consider the bounded region of the plane enclosed by these leaves. By the above, there must be at least one prong originating inside this region which must terminate at $\infty$, but it cannot escape the region enclosed by the two prong leaves, showing a contradiction.
\end{proof}

\begin{remark}\label{remark:dm}
In \cite{mcd}, Dougherty and McCammond study a pair of transverse singular foliations equivalent to those induced by the real and imaginary parts of $df/f$. Their point of view is somewhat different: they induce $\mathcal F$ by pulling back the transverse foliations $\abs{z} = c$ and $\arg(z) = c$ on $\C^*$ under the map $f: \C \to \C$, but the result as unmeasured foliations is the same. They equip their foliations with measures that are different from the ones coming here from the flat structure, considering instead the measure induced by the Euclidean structure on $\C^*$. Using this, they are able to obtain a detailed picture of various combinatorial structures associated to the polynomial $f$. It would be interesting to see if the translation surface perspective has anything to add to the story they pursue.
\end{remark}

\begin{figure}[ht]
\labellist
\small
\pinlabel $z_1$ [b] at 90.70 218.25
\pinlabel $z_2$ [bl] at 243.76 206.91
\pinlabel $z_3$ [tl] at 246.59 102.04
\pinlabel $z_4$ [t] at 164.39 53.85
\pinlabel $z_5$ [tr] at 73.69 116.21
\pinlabel $w_1$ [tl] at 158.73 178.57
\pinlabel $w_2$ [br] at 208.33 162.98
\pinlabel $w_3$ [bl] at 141.72 130.38
\pinlabel $S_1$ [r] at 382.64 212.58
\pinlabel $S_2$ [r] at 382.64 170.06
\pinlabel $S_3$ [r] at 382.64 127.55
\pinlabel $S_4$ [r] at 382.64 85.03
\pinlabel $S_5$ [r] at 382.64 42.52
\pinlabel $\int\frac{df}{f}$ at 310 150
\endlabellist
\includegraphics[scale=0.7]{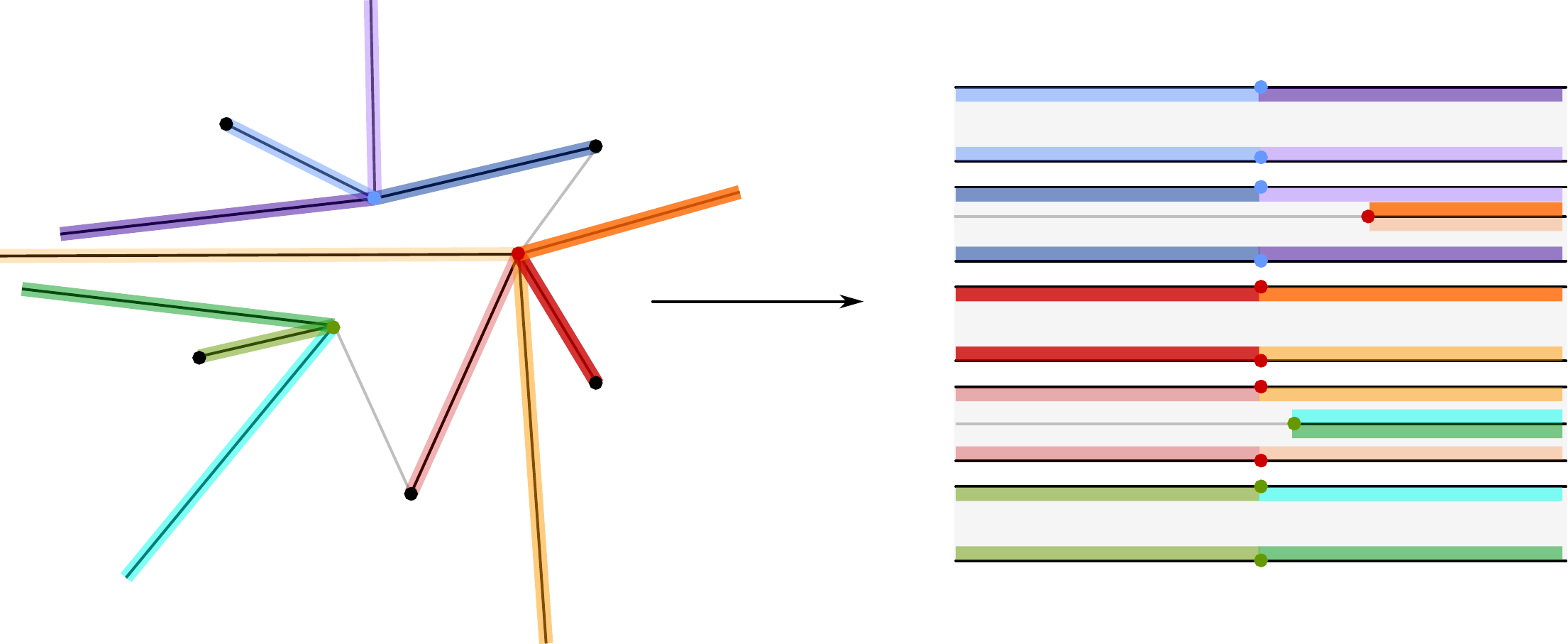}
\caption{A strip decomposition of a differential $df/f$. The polynomial $f$ has simple zeroes $z_1, \dots, z_5$, and critical points $w_1, w_2, w_3$ with $w_2$ having multiplicity $2$. On the left, the points $z_i, w_j$ are shown in $\C$ along with the prongs of the horizontal foliation. The strip decomposition is shown on the right. Colors on the right indicate gluing instructions, and correspond to the colorings of the prongs at left.}
\label{figure:sd1}
\end{figure}

\subsection{Strip decomposition}\label{subsection:stripdecomp} A translation surface $T \in \Omega_\kappa$ admits a finite number of combinatorially-determined standard forms which we call a {\em strip decomposition}. Assign a numbering $S_1^L, \dots, S_n^L$ to the $n$ left-infinite strips of height $2 \pi i$, or equivalently a numbering $z_1, \dots, z_n$ of the zeroes of $f$. The strips $S_i^L$ extend infinitely far to the left by hypothesis. Following the leaves of the horizontal foliation in $S_i^L$ to the right (towards $\infty$), we observe that there must be at least one leaf terminating at a critical point $w_j$, for otherwise, this region would close up into a topological cylinder, rendering the translation surface disconnected (except, of course, in the case $n = 1$ with differential $dz/z$). Choosing one such leaf, we fix an identification $S_i^L \cong S^L$ by identifying $w_j$ with $0 \sim 2 \pi i$ in $S^L$. We remark that we allow for the non-generic possibility that the leaf connecting $z_i$ to $w_j$ passes through one or more additional cone point.

Define $S_i \subset T$ as the continuation to the right of the leaves of the horizontal foliation passing through $S_i^L$. This is then a bi-infinite strip of height $2 \pi i$, possibly containing additional cone points. The boundary of $S_i^L$ is determined by the prong of $w_j$ from which the leaf terminating at $z_i$ emanates. We say that $S_i$ is {\em bounded} by the cone point $w_j$, and call the distinguished prong a {\em fixed prong}. By \Cref{lemma:horizfol}, all of the leaves of the horizontal foliation not terminating at a cone point are contained in some strip $S_i$, and so this produces a decomposition of $T$ as claimed. 

Strips $S_i$ and $S_j$ are said to be {\em vertically adjacent} if the top right boundary of $S_i$ is identified with the lower right boundary of $S_j$ or vice versa. We will speak of the strips {\em above} and {\em below} $S_i$ via this definition.

\para{Strip coordinates} The strip decomposition of $T$ of course depends on various non-canonical choices. To track this, and moreover to understand the global structure of the space $\Omega_\kappa$, we define
\[
\Omega_\kappa^{ord} \to \Omega_\kappa
\]
as the covering space consisting of differentials $df/f \in \Omega_\kappa$ together with labelings $z_1, \dots, z_n$ and $w_1, \dots, w_p$ of the zeroes and critical points of $f$, respectively. Note that the stabilizer of a {\em labeled} configuration of two or more points in $\C$ under the affine group is trivial, so that $\Omega_\kappa^{ord}$ is a manifold cover of the orbifold $\Omega_\kappa$. For the ensuing discussion, we will lift $df/f \in \Omega_\kappa$ to one of its preimages in $\Omega_\kappa^{ord}$.

Having fixed such data, one can then encode the combinatorial type of a strip decomposition by tracking the prongs of the cone points. A cone point $w_j$ of order $k_j$ has $2k_j+2$ prongs emanating from it, of which $k_j+1$ point to the left on the translation surface. Since $\sum_{j=1}^p k_j = n-1$, there is a total of $n-1+p$ left prongs. Of these, $n$ are fixed prongs; we call the remaining $p-1$ {\em free prongs}. Generically, the leaf emanating from a free prong is contained in the interior of a unique strip $S_i$; exceptionally it may terminate at some other cone point. Given a labeled differential $df/f$, a choice of strip decomposition yields the following data:

\begin{enumerate}
    \item For each zero $z_i$ of $f$, a choice of some left prong to bound the strip $S_i$,
    \item An assignment of the remaining $p-1$ free prongs to one of the strips containing it (generically, a free prong lies in a unique strip; exceptionally it may lie on the boundary between two that are vertically adjacent),
    \item The $p-1$ relative periods $\gamma_j$ of the arcs connecting each free prong to the fixed prong for its strip, each an element of $\R + [0,2\pi] i$; if two free prongs belong to the same strip, the relative periods must be distinct (so that the cone points do not collide).
\end{enumerate}

Conversely, we can use the relative periods of the free prongs to put a system of coordinates (``strip coordinates'') on $\Omega_\kappa^{ord}$. A strip coordinate chart is indexed by a {\em labeling system} which includes the data specified by (1) and (2) above. Without further constraint, the relative periods of (3) do not yet induce a coordinate patch on $\Omega_\kappa^{ord}$: as two free prongs in the same strip orbit around one another, one will pass through the slit associated to the other and into a different strip. To prevent this, we additionally impose {\em orderings} on the imaginary parts of the relative periods of the free prongs within a given strip.

\begin{definition}[Labeling system]
    \label{definition:labels}
    Fix a partition $\kappa$ of $n-1$ and consider the associated set of left prongs $\mathcal P_\kappa$ of cardinality $n+p-1$. A {\em labeling system} $L$ is a choice of the following data:
    \begin{enumerate}
        \item For each $1 \le i\le n$, a choice of prong $v\in \mathcal P_\kappa$ as the fixed prong for $S_i$,
        \item An assignment of each of the remaining $p-1$ prongs in $\mathcal P_\kappa$ to some strip $S_i$,
        \item For each strip $S_i$, a choice of {\em ordering} $v_1 \le \dots \le v_{m_i}$ of the $m_i$ free prongs assigned to $S_i$.
    \end{enumerate}
\end{definition}

Not every labeling system is realized by some $df/f \in \Omega_\kappa^{ord}$, since some choices of labeling systems will cause the translation surface to be {\em disconnected}. Here we {\em state} the combinatorial criterion for connectedness only; we prove that this encodes topological connectedness in \Cref{lemma:stripintoomega}.

\begin{definition}[Connected labeling system]
    \label{definition:connectedLS}
    Let $L$ be a labeling system for some partition $\kappa = \{k_1, \dots, k_p\}$ of $n-1$. Let $\Gamma_L$ be the graph whose vertices are the parts $k_i$ of $\kappa$, and where $k_i$ and $k_j$ are connected by an edge if there are prongs of $k_i$ and $k_j$ contained in the same strip $S_i$. Then $L$ is said to be {\em connected} if $\Gamma_L$ is. 
\end{definition}

Having specified a labeling system, we turn now to the problem of parametrizing the relative periods of the free prongs. For $m \ge 1$, define the closed $m$-simplex via
\[
\Delta^m = \{ (y_1, \dots, y_m) \in \R^m \mid 0 \le y_1 \le \dots \le y_m \le 2 \pi\}.
\]

\begin{definition}[Strip coordinate domain]
    \label{definition:stripcoord}
    Let $L$ be a labeling system of some partition $\kappa$ of $n-1$; for $1 \le i \le n$, suppose there are $m_i$ free prongs assigned to the strip $S_i$. The associated {\em strip coordinate domain} is the set
    \[
    \Omega_L := \left(\prod_{i = 1}^n \R^{m_i} + i \Delta^{m_i}\right) \setminus \mathcal D \subset \C^{p-1},
    \]
    where $\mathcal D$ is the union of the following sets:
    \begin{enumerate}
        \item $\mathcal D_1$ the set of points where $z_{j_1} = z_{j_2}$ for $j_1, j_2$ assigned to the same strip,
        \item $\mathcal D_2$ the set of points where $x_{j_1} = x_{j_2} < 0$ and $y_{j_2} = y_{j_1} + 2 \pi i$ for $j_1, j_2$ assigned to the same strip,
        \item $\mathcal D_3$ the set of points where $x_j = x_k > 0$ and $y_j = 2 \pi i, y_k= 0$, where $j$ is assigned to the strip below the strip containing $k$,
        \item $\mathcal D_4$ the set of points where $z_j = 0$ or $z_j = 2 \pi i$.
    \end{enumerate}
\end{definition}



\begin{lemma}
    \label{lemma:stripintoomega}
    Let $L$ be a connected labeling system of the partition $\kappa$. Then there is a realization map
    \[
    r: \Omega_L \to \Omega_\kappa^{ord}.
    \]
    The restriction of $r$ to the interior of $\Omega_L$ is a biholomorphism onto its image. 
\end{lemma}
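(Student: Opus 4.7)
The plan is three-fold: construct $r$ explicitly on all of $\Omega_L$, verify that the output lies in $\Omega_\kappa^{ord}$, and then identify the inverse on the interior with a period map. I would begin by taking as input a point of $\Omega_L$ and assembling a translation surface as follows. Introduce $n$ model strips $S_1, \dots, S_n$, each a bi-infinite horizontal strip of height $2\pi$ with top and bottom identified along the half-line $\{\Re(z) \le 0\}$ so as to produce the left-infinite cylinder modeling a simple pole of residue $2\pi i$. In $S_i$ I mark the fixed prong at the origin $0 \sim 2\pi i$ and place cone points for the $m_i$ free prongs assigned to $S_i$ at the prescribed complex positions. The data of $L$, combined with the cyclic orderings of prongs at each critical point $w_j$, dictates a rigid system of edge identifications on the right halves $\{\Re(z) > 0\}$: at $w_j$ the $2k_j + 2$ prongs separate $2k_j + 2$ angular wedges of angle $\pi$, each wedge is assigned to a strip by $L$, and consecutive wedges in the cyclic order around $w_j$ are glued together. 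Performing these identifications yields a translation surface $T$, and $r$ sends the input to $T$.

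Next I would verify $T \in \Omega_\kappa^{ord}$. By construction the $2k_j + 2$ wedges at $w_j$ assemble into a cone point of angle $2\pi(k_j + 1)$, so $w_j$ is a zero of $df/f$ of order $k_j$. Each strip folds on the left into a simple pole of residue $2\pi i$ at the marked zero $z_i$. On the right, the cyclic structure of the prong identifications arranges the $n$ strips into a single right-infinite cylinder of height $2\pi n$, modeling the simple pole at $\infty$ of residue $-2\pi i n$. Connectedness of $T$ is equivalent to connectedness of $L$: two strips lie in the same component of $T$ exactly when their bounding cone points are joined by a path in $\Gamma_L$, since each shared cone point directly connects all strips incident to it. Thus the hypothesis that $L$ is connected guarantees that $T$ represents a labeled element of $\Omega_\kappa^{ord}$.

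For the biholomorphism claim on the interior, holomorphicity of $r$ is immediate: the construction places cone points at holomorphically varying positions and uses rigid combinatorial gluings independent of the coordinates. To invert on the interior, observe that when all free prongs lie strictly inside their assigned strips, the strip decomposition of $r(z)$ has combinatorial type exactly $L$, and the coordinates of $z$ are recovered as the relative periods $\int df/f$ from the fixed prong of each strip to each free prong assigned to it. Because these periods are holomorphic functions of the underlying differential, this defines a holomorphic inverse on the image of the interior, and injectivity of $r$ on the interior follows at once.

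The main obstacle is the bookkeeping in the second step: tracking the cyclic orderings at each cone point carefully enough to confirm that the gluings produce cone angles of the correct order at each zero, merge the strip right-ends into a single cylinder modeling the pole at infinity, and yield a surface that is topologically connected precisely when $\Gamma_L$ is connected. Once these structural facts are in place, the biholomorphism conclusion follows straightforwardly from the holomorphicity of the construction and of the period integrals.
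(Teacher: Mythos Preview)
Your proposal is correct and follows essentially the same approach as the paper: build $T$ from strips with slits placed at the relative periods and glued according to the prong cyclic orders, argue connectedness via $\Gamma_L$, and deduce holomorphicity from the holomorphicity of period maps. The only minor difference is in the injectivity step: you recover the coordinates by observing that on the interior the strip decomposition of $r(z)$ has combinatorial type exactly $L$, whereas the paper argues directly that distinct interior points yield relative periods that cannot differ by an element of the absolute period lattice $2\pi i\,\Z$ (since all imaginary parts lie strictly in $(0,2\pi)$); both arguments are valid and amount to the same thing.
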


\begin{proof}
    A point $\gamma = (\gamma_1, \dots, \gamma_{p-1}) \in \Omega_L$ determines a translation surface $T_\gamma$ as follows: assemble $n$ bi-infinite strips $S_1, \dots, S_n$, and mark $0 \sim 2 \pi i \in S_i$ with the prong specified by $L$. Given a relative period $\gamma_j$, the labeling system specifies a free prong in a strip $S_i$; place the free prong at $\gamma_j \in S_i$ and introduce a slit running horizontally to the right from $\gamma_j$ to $\infty$. The cyclic ordering on the prongs at each cone point then specifies gluing instructions on the slits as well as on the right halves of the top and bottom boundary components of each $S_i$ (the top and bottom left halves of $S_i$ are identified to each other). The excision of the set $\mathcal D$ from $\Omega_L$ ensures that after gluing, no pair of free prongs are identified, and that no free prong is placed at the location of a fixed prong. 

    We claim that $T_\gamma$ is connected if and only if the labeling system $L$ is connected in the sense of \Cref{definition:connectedLS}. Note that the vertices of $\Gamma_L$ are canonically identified with the cone points $w_j$ of $T_\gamma$. A first trivial observation is that $T_\gamma$ will be connected if and only if there is a path connecting each pair of strips $S_i, S_j$. Suppose that $T_\gamma$ is connected; we wish to find a path in $\Gamma_L$ connecting an arbitrary pair of vertices $w_j, w_k$. Choose prongs at $w_j$ and at $w_k$; these live in strips $S, S'$ respectively. If $S = S'$ then $w_j$ and $w_k$ are connected by definition; otherwise, let $c$ be a path in $T_\gamma$ connecting $S$ to $S'$, and one can use this to build a corresponding path in $\Gamma_L$ by moving through a sequence of cone points lying in the sequence of strips passed through by $c$.
    
    Conversely, suppose that $\Gamma_L$ is connected. Given a strip $S_i$, let $w_i$ be the corresponding bounding cone point. Observe that it suffices to show that strips $S_a$ and $S_b$ are path-connected in $T_\gamma$ if the associated vertices $w_a, w_b$ of $\Gamma_L$ are either equal or adjacent. If $w_a=w_b = w$ then a path connecting $S_a$ to $S_b$ can be constructed by winding some number of times around $w$. If $w_a$ and $w_b$ are adjacent, then there is some strip $S_c$ containing a prong of both $w_a$ and $w_b$; a path connecting $S_a$ to $S_b$ can be constructed by concatenating a path between neighborhoods of $w_a$ and $w_b$ in $S_c$ with paths winding around $w_a$ and $w_b$ between $S_c$ and $S_a$, resp. $S_b$.

    At this point, we have shown that this construction process yields a well-defined map $r: \Omega_L \to \Omega_\kappa^{ord}$. We next observe that $r$ is holomorphic - this is a simple consequence of the fact that the relative period maps on $\Omega_\kappa^{ord}$ are holomorphic. It remains to show that $r$ is injective on the interior of $\Omega_L$. To see this, observe that translation surfaces $T_\gamma$ and $T_\delta$ determine the same point in $\Omega_\kappa^{ord}$ only if the sets of relative periods between cone points are equal. The sets of relative periods between fixed cone points is a torsor on the group of absolute periods; in this case the absolute periods is just the set $2 \pi i \Z \subset \C$. If $\gamma, \delta \in \interior(\Omega_L)$ are distinct, then the corresponding relative periods all have imaginary part strictly between $0$ and $2\pi$, so that the relative periods of $T_\gamma$ cannot be obtained from those of $T_\delta$ by translation by some absolute period. 
    \end{proof}

\subsection{Change of coordinates; a cell structure on $\boldmath{\Omega_\kappa^{ord}}$}\label{subsection:global} We next consider the transition maps between strip coordinate domains with overlapping image. There are two basic transitions to study: (1) changing which of the prongs in $S_i$ is fixed, and (2) pushing the topmost (relative to the ordering) free prong out the top right side of $S_i$ and into the bottom of the strip above (or in reverse, pushing the bottom free prong through the bottom right side). All coordinate changes are compositions of these two, e.g. pushing a free prong out the top left side is equivalent to changing the free prong to the fixed, and pushing the new free prong (formerly the fixed) out the bottom right. The lemmas below record the effects of these moves on strip coordinates; the proofs follow from inspection of \Cref{figure:cc1,figure:cc2}.

\begin{lemma}[Type 1: changing the fixed prong]
    \label{lemma:fixedtofree}
    Let $L$ be a connected labeling system for $\kappa$. Choose some strip $S_i$; let $v_0$ denote the fixed prong and let $v_1 \le \dots \le v_{m_i}$ denote the free prongs in $S_i$ together with their cyclic ordering. Define $L'$ as the labeling system obtained from $L$ by choosing some $v_j$ as the new fixed prong for $S_i$, and ordering the free prongs via 
    \[
    v_{j+1} \le \dots \le v_{m_i} \le v_0 \le v_1 \le \dots \le v_{j-1}.
    \]
    The map on relative periods $t: \Omega_L \to \Omega_{L'}$ is given by
    \[
    t(\gamma_1, \dots, \gamma_{m_i}) = (\gamma_{j+1} - \gamma_j, \dots,\gamma_{m_i} - \gamma_j, 2\pi i- \gamma_j, \gamma_1 + 2\pi i - \gamma_j, \dots, \gamma_{j-1}+ 2 \pi i-\gamma_j).
    \]
\end{lemma}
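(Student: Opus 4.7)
The plan is to view the statement as a pure coordinate-change calculation: the underlying translation surface $T \in \Omega_\kappa^{ord}$ is intrinsic, and only our presentation of it depends on the labeling system. So I would set up a careful bookkeeping of positions of prongs inside the strip $S_i$, and then simply read off the formula from a universal-cover picture.

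First I would fix notation. The strip $S_i$ is a bi-infinite cylinder of height $2\pi$ whose universal cover is $\tilde S_i = \R + i\R$ with deck group $2\pi i \Z$. A choice of labeling system amounts to selecting (i) a preferred lift of some prong to serve as the marked point $0 \sim 2\pi i$ (the fixed prong), and (ii) a fundamental domain $\R + i[0,2\pi]$ in which the lifts of the other prongs are recorded. In the $L$-coordinates, by construction the lift of $v_0$ is $0$, the lift of $v_k$ is $\gamma_k$, and $0 \le \im \gamma_1 \le \dots \le \im \gamma_{m_i} \le 2\pi$.

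Next I would switch to $L'$-coordinates. The new fixed prong is $v_j$, so I translate every lift by $-\gamma_j$ to move $v_j$ to $0$. The prong $v_k$ now sits at $\gamma_k - \gamma_j$ and $v_0$ sits at $-\gamma_j$. To match \Cref{definition:stripcoord} I must pick representatives inside the fundamental strip $\R + i[0,2\pi]$ by adjusting by multiples of $2\pi i$. For $k>j$ no correction is needed since $\im(\gamma_k - \gamma_j) \in [0, 2\pi]$; for $v_0$ one adds $2\pi i$, yielding $2\pi i - \gamma_j$ with imaginary part $2\pi - \im \gamma_j \in [0,2\pi]$; for $k<j$ one adds $2\pi i$, yielding $\gamma_k + 2\pi i - \gamma_j$ with imaginary part $2\pi + \im \gamma_k - \im \gamma_j \in [0,2\pi]$. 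A one-line check shows these corrected imaginary parts are monotonically increasing in the sequence $v_{j+1}, \dots, v_{m_i}, v_0, v_1, \dots, v_{j-1}$, so this sequence is indeed the $L'$-ordering, and the listed tuple is the $L'$-coordinate vector.

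There is no real obstacle here; the main point of care is aligning sign and orientation conventions (which prong is "top" vs.\ "bottom", and the direction of the cyclic order around $w_j$) with those used in the construction of $r$ in \Cref{lemma:stripintoomega} and the conventions of \Cref{figure:cc1,figure:cc2}. Once that is fixed, the formula is forced: translating by $-\gamma_j$ is the unique affine map placing $v_j$ at the origin, and the only freedom is in which $2\pi i \Z$-representative to select, which in turn is dictated by requiring the imaginary parts of the listed periods to lie in $[0,2\pi]$ in the cyclic order inherited from the common cone point $w_j$.
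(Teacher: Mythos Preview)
Your argument is correct and is essentially the same as the paper's, which simply says the result ``follows from inspection of \Cref{figure:cc1}''; you have written out the translate-then-correct-by-$2\pi i$ computation that the figure encodes. One small imprecision worth fixing: $S_i$ is not literally a bi-infinite cylinder (only the left end has top and bottom identified; to the right there are slits and gluings to other strips), so the ``universal cover with deck group $2\pi i\Z$'' language is a slight abuse---but since you are only tracking the positions of marked points in $\R + i[0,2\pi]$ and the recutting operation, this does not affect the calculation.
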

\begin{figure}[ht]
\labellist
\small
\pinlabel $v_0$ [bl] at 107.71 5.67
\pinlabel $v_1$ [br] at 58.85 32.85
\pinlabel $v_2$ [tr] at 117.38 49.35
\pinlabel $v_3$ [br] at 175.73 65.19
\pinlabel $v_2$ [bl] at 388.31 8.50
\pinlabel $v_3$ [br] at 448.00 17
\pinlabel $v_0$ [br] at 375.30 32.85
\pinlabel $v_1$ [br] at 329.95 62.36
\endlabellist
\includegraphics[scale=0.9]{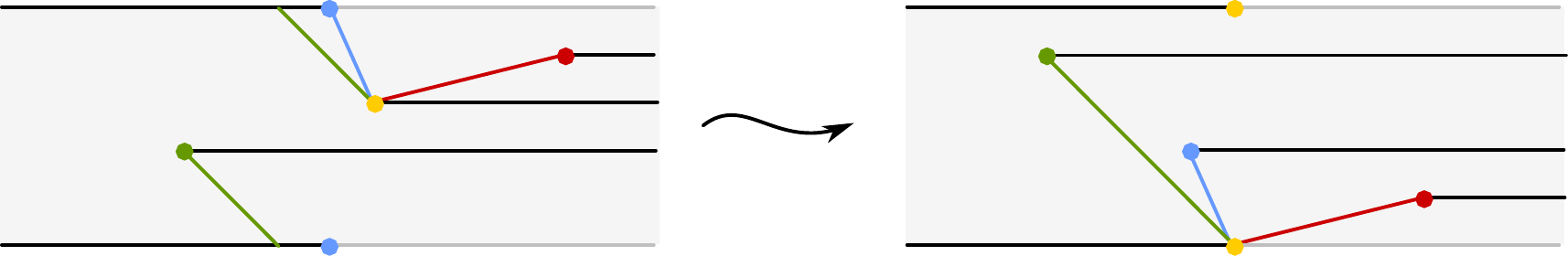}
\caption{Type 1: changing the fixed prong from $v_0$ to $v_2$.}
\label{figure:cc1}
\end{figure}

\begin{lemma}[Type 2: pushing up/down]
\label{lemma:pushup}
Let $L$ be a connected labeling system for $\kappa$. Let $S_i$ be a strip with fixed prong $v_0$ and let $v_1 \le \dots \le v_{m_i}$ denote the free prongs in $S_i$ together with their cyclic ordering. Denote the relative periods by $\gamma_1, \dots, \gamma_{m_i}$, and suppose that $\gamma_{m_i} = x + 2 \pi i$ with $x > 0$. Let $S_j$ be the strip whose bottom right boundary is identified with the top right boundary of $S_i$, and let $v_1'\le \dots \le v_{m_j}'$ denote the free prongs in $S_j$. 

Changing the assignment of $v_{m_i}$ from $S_i$ to $S_j$ yields the labeling system $L'$ obtained from $L$ by reassigning $v_{m_i}$ to $S_j$ with ordering in $S_j$
\[
v_{m_i}\le v_1' \le \dots \le v_{m_j}'
\]
and relative period $x$.

Conversely, if $v_1$ has period $x \in \R$ with $x > 0$, then we may reassign it to the strip $S_j$ whose top right boundary is identified with the bottom right on $S_i$, assigning it to the maximal position in $S_j$ with period $x + 2 \pi i$. 
\end{lemma}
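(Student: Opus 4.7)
The plan is to verify the coordinate change by direct geometric inspection of the strip decomposition, in parallel with the argument for \Cref{lemma:fixedtofree}. The underlying translation surface $T \in \Omega_\kappa^{ord}$ is unchanged by a Type~2 move, so the content of the lemma is (a) the explicit formula for the new relative period, (b) the new cyclic ordering on the free prongs of $S_j$, and (c) the validity of the resulting labeling system $L'$.

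First I would unpack the hypothesis $\gamma_{m_i} = x + 2\pi i$ with $x > 0$: this places the cone point $w^{\ast}$ underlying $v_{m_i}$ at position $(x, 2\pi)$ in the strip coordinates of $S_i$, i.e., on the top-right edge of $S_i$, at horizontal distance $x$ strictly to the right of the fixed-prong cone point $w$ of $v_0$. By construction, the top-right edge of $S_i$ and the bottom-right edge of $S_j$ are identified by a translation. The essential geometric observation is that this identification is $(s, 2\pi)_{S_i} \leftrightarrow (s, 0)_{S_j}$, with no shift in the real coordinate. This reflects the structural fact that the fixed prongs $v_0$ and $v_0'$ sit at a common cone point, cyclically separated by exactly the single right-pointing prong bounding the shared edge; thus both edges emanate from the same point at the $0$-position in their respective strip coordinates. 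Consequently $w^{\ast}$ sits at position $(x, 0)$ in $S_j$-coordinates, so the new relative period is exactly $x$.

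Next I would verify the ordering and the validity of $L'$. Since $\im(x) = 0$ is the minimum possible imaginary part, the reassigned prong $v_{m_i}$ is forced to the bottom of the ordering on $S_j$, giving $v_{m_i} \le v_1' \le \dots \le v_{m_j}'$. The condition $x > 0$ rules out $w^{\ast} = w'$, so the excised locus $\mathcal{D}_4$ of \Cref{definition:stripcoord} is avoided; collisions within $\mathcal{D}_1, \mathcal{D}_2, \mathcal{D}_3$ are excluded because the underlying cone-point configuration of $T$ is already well-defined by $L$ and therefore consistent. The reverse direction is entirely symmetric: if $v_1 \in S_i$ has period $x \in \R$ with $x > 0$, it sits on the bottom-right edge, and reassignment into the strip $S_j^{\sharp}$ whose top-right edge is identified with $S_i$'s bottom-right via $(s, 0)_{S_i} \leftrightarrow (s, 2\pi)_{S_j^{\sharp}}$ yields new period $x + 2\pi i$, making $v_1$ maximal in $S_j^{\sharp}$'s ordering.

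The main obstacle is pinning down rigorously the ``no real-coordinate shift'' property of the edge identification, which amounts to the combinatorial assertion that the fixed prongs of two vertically adjacent strips are a pair of cyclically adjacent left-pointing prongs at a common cone point. This is precisely the local picture one reads off from \Cref{figure:cc2}: the fixed prong of $S_j$ is by definition the boundary point where $S_j^L$ is attached to the rest of the surface, and in the vertically adjacent configuration this attaches at the same cone point as $v_0$. With this structural input in place, both period formulas reduce to a one-line translation calculation, and the orderings follow immediately from the extremal position of the reassigned imaginary parts.
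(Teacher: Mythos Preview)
Your proposal is correct and follows the same approach as the paper, which simply states that the proof ``follows from inspection of \Cref{figure:cc2}''; you have essentially written out what that inspection entails. Your identification of the key structural fact---that vertically adjacent strips have their fixed prongs at the same cone point, so the edge identification $(s,2\pi)_{S_i}\leftrightarrow(s,0)_{S_j}$ carries no real shift---is exactly the content one reads off from the figure, and it follows directly from the definition of vertical adjacency (the shared right prong emanates from a single cone point).
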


\begin{figure}[ht]
\labellist
\small
\pinlabel $S_i$ [b] at 19.84 31.18
\pinlabel $S_j$ [b] at 19.84 113.38
\pinlabel $v_0$ [bl] at 104.87 5.67
\pinlabel $v_1$ [br] at 56.69 34.01
\pinlabel $v_2$ [tr] at 119.04 62.19
\pinlabel $v_3$ [br] at 178.57 79.36
\pinlabel $v_1'$ [br] at 147.39 124.71
\endlabellist
\includegraphics[scale=0.9]{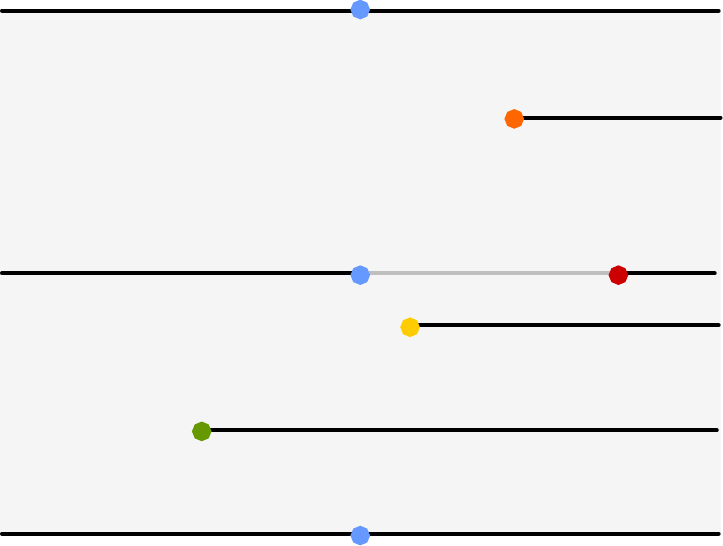}
\caption{Type 2: pushing $v_3$ up from $S_i$ to $S_j$.}
\label{figure:cc2}
\end{figure}

\begin{lemma}
\label{lemma:movesequence}
Let $L, L'$ be connected labeling systems for $\kappa$, and let
\[
df/f \in r(\Omega_L) \cap r(\Omega_{L'})
\]
be a differential (with zeroes and poles labeled) in the image of the strip coordinate domains for both $L$ and $L'$. Then $L'$ can be obtained from $L$ by a sequence of moves of type $1$ and $2$. 
\end{lemma}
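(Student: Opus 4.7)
My plan is to prove the lemma by explicit construction, producing a sequence of type~1 and type~2 moves transforming $L$ into $L'$, by induction on a combinatorial measure of the difference between the two labeling systems: namely, the number of free prongs on which $L$ and $L'$ disagree about strip assignment, plus the number of strips on which they choose different fixed prongs.

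The key geometric observation driving the inductive step is that whenever a free prong $v$ is assigned to different strips by $L$ and $L'$, the prong $v$ must geometrically lie on the horizontal separatrix leaf joining two vertically adjacent strips of the translation surface for $df/f$. Otherwise $v$ would lie in the interior of a unique strip and both labelings would be forced to agree on its assignment. In the coordinates of $\Omega_L$ this places the relative period of $v$ with imaginary part exactly $2\pi$ (or $0$), so that $v$ occupies the topmost (resp.\ bottommost) position of its strip's free-prong ordering---precisely the precondition of the type~2 move in \Cref{lemma:pushup}. A single type~2 move then transfers $v$ one step closer to its $L'$-target strip; iterating brings every prong into the strip assigned to it by $L'$. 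Once strip assignments agree, the $L'$-fixed prong of each strip is already present as a free prong under the current labeling, and a single type~1 move per strip (\Cref{lemma:fixedtofree}) installs the correct fixed prong. The $y$-ordering of the remaining free prongs in each strip is then determined by the already-matched relative periods and so automatically agrees with $L'$.

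The main obstacle I expect is the case of tied boundary configurations, in which several free prongs in a single strip simultaneously share $y$-coordinate $2\pi$ but $L'$ wishes to migrate only some of them. Here the $L$-ordering may not place the intended migrating prong at the topmost position, so a naive application of type~2 would move the wrong prong. This is resolved by a preparatory type~1 move cycling the fixed prong so that the target prong rises to topmost position; a direct check using the explicit formulas of \Cref{lemma:fixedtofree} shows that in the tied case the type~1 move preserves the feature that the new topmost still has $y$-coordinate $2\pi$. After this, the desired type~2 move can be applied, and a further type~1 move restores the correct fixed prong before the next step of the induction. It is also necessary to verify that every intermediate labeling remains connected in the sense of \Cref{definition:connectedLS}, but this follows easily: type~1 moves do not alter the assignment of prongs to strips at all, and a single type~2 move only transfers a prong between two strips that are already incident to the same connected region of the translation surface, so the graph $\Gamma_L$ remains connected throughout.
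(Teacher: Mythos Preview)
Your central geometric claim has a gap. You assert that whenever a free prong $v$ is assigned to different strips by $L$ and $L'$, its relative period in the $\Omega_L$-coordinates must have imaginary part $0$ or $2\pi$. What is actually true is the weaker intrinsic statement that the leftward leaf from $v$ must pass through some cone point $c$ before reaching any pole. But $c$ need not be the fixed prong of $S_i$ under $L$; it can equally well be another \emph{free} prong lying in the interior of $S_i$. Concretely: suppose $L'$ chooses a different fixed prong $v'$ for $S_i$, and suppose $v$ happens to sit at the same height as $v'$ in the $L$-coordinates (say at height $y\in(0,2\pi)$, with larger real part). Then under $L'$ the prong $v$ lands at height $0$ and may legitimately be assigned to the strip below---yet under $L$ it sits strictly in the interior, so no type~2 move is available. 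Your induction stalls in exactly this case, and the ``tied boundary'' discussion does not cover it, since that discussion presumes the tie occurs at $y=2\pi$.

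The paper's proof uses the same two ingredients but in the reverse order, and this is what repairs the argument. The strips $S_i$ as subsets of the translation surface are intrinsically determined by the horizontal foliation (each is the union of leaves limiting on the pole $z_i$); only the planar \emph{presentation} of $S_i$ depends on the choice of fixed prong. So the paper first performs type~1 moves to make the fixed prongs of $L$ agree with those of $L'$---this is possible because $L'$'s fixed prong for $S_i$ necessarily has its leaf terminating at $z_i$ and hence already lies in $S_i$ under $L$. Once the fixed prongs coincide, the strip coordinates coincide, and \emph{then} your claim becomes valid: any residual disagreement in strip assignment is confined to prongs at $y\in\{0,2\pi\}$, handled by type~2. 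Swapping your steps~1 and~2 would make your inductive scheme go through.
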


\begin{proof}
By hypothesis, there are two translation surfaces $T$ and $T'$ coming from $\Omega_L$ and $\Omega_{L'}$, respectively, that determine the same point in $\Omega_\kappa^{ord}$. Thus $T$ and $T'$ are equivalent via a sequence of cut/paste moves that moreover preserve the labelings of each of the poles and zeroes of the associated differential $df/f$. Applying the strip decomposition to $T$ and $T'$, it follows that each of the corresponding strips are individually cut/paste equivalent. A cut/paste move applied to a given strip corresponds to a move of type $1$ on the labeling system. After applying a cut/paste isomorphism taking $T$ to $T'$ as labeled translation surfaces, the only remaining choices in the assignment of a labeling system arises in assigning prongs lying on the boundary of two strips to one or the other; this corresponds to moves of type $2$. 
\end{proof}

\para{Summary} We summarize the results of the section in the following result, describing the global structure of $\Omega_\kappa^{ord}$ obtained by gluing together strip coordinate patches according to moves of types $1$ and $2$.

\begin{proposition}\label{proposition:cellstruct}
    There is a biholomorphism
    \[
    \Omega_\kappa^{ord} \cong \coprod \Omega_L/ \sim,
    \]
    where the union is taken over all connected labeling systems for $\kappa$ and $\sim$ is the equivalence relation generated by moves of types $1$ and $2$ as in \Cref{lemma:fixedtofree,lemma:pushup}. 
\end{proposition}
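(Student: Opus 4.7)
The plan is to assemble the pieces already in place (\Cref{lemma:stripintoomega,lemma:fixedtofree,lemma:pushup,lemma:movesequence}) into a surjective, locally biholomorphic map from $\coprod \Omega_L$ to $\Omega_\kappa^{ord}$, and then show the fibers are exactly the orbits of $\sim$. Concretely, the realization maps $r_L: \Omega_L \to \Omega_\kappa^{ord}$ of \Cref{lemma:stripintoomega} assemble into a single map $R: \coprod \Omega_L \to \Omega_\kappa^{ord}$, which I will show descends to a biholomorphism on the quotient by $\sim$.

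The first task is surjectivity of $R$. Given a labeled differential $df/f \in \Omega_\kappa^{ord}$, I would use the discussion opening \Cref{subsection:stripdecomp}: for each labeled zero $z_i$, pick one of the (finitely many) horizontal leaves in $S_i^L$ terminating at a critical point; this determines a fixed prong bounding $S_i$. Then assign each of the remaining $p-1$ free prongs to whichever strip it lies in (breaking ties arbitrarily when a prong lies on a shared boundary), and order the free prongs inside each $S_i$ by imaginary part of the corresponding relative period. This produces a labeling system $L$, connected because the translation surface itself is, and the point $\gamma \in \Omega_L$ given by the relative periods lies in $R^{-1}(df/f)$.

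Next, I would check that $R$ factors through $\sim$, i.e.\ that points identified by a single Type 1 or Type 2 move have the same image. This is the content of the coordinate-change formulas in \Cref{lemma:fixedtofree,lemma:pushup}: both moves correspond to cut/paste operations on the underlying translation surface which leave $df/f \in \Omega_\kappa^{ord}$ unchanged. Thus $R$ descends to a well-defined map $\bar R: \coprod \Omega_L /\!\sim\, \to \Omega_\kappa^{ord}$, which is automatically surjective and, on the image of each $\Omega_L^{\interior}$, a local biholomorphism by \Cref{lemma:stripintoomega}.

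The main point, and the step I expect to be the most delicate, is injectivity of $\bar R$. Suppose $\gamma \in \Omega_L$ and $\gamma' \in \Omega_{L'}$ satisfy $R(\gamma) = R(\gamma') = df/f$. \Cref{lemma:movesequence} is exactly the statement needed: it produces a sequence of Type 1 and Type 2 moves carrying $L$ to $L'$. I would then argue that the composition of the associated coordinate-change maps given by \Cref{lemma:fixedtofree,lemma:pushup} carries $\gamma$ to $\gamma'$; this follows because each intermediate move is a biholomorphism between the relevant coordinate domains that is compatible with $R$, and the overall composition is then determined by the requirement that it match $R$ on both sides. The subtle part is handling boundary strata of $\Omega_L$ where a free prong coincides with another cone point, or lies exactly on the boundary between two strips: here more than two labeling systems may be compatible simultaneously, and one must verify that the $\sim$-equivalence class encompasses all of them. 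This is handled by observing that any such boundary configuration arises as a limit of interior configurations where \Cref{lemma:movesequence} applies, and the coordinate changes extend continuously to the boundary.

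Finally, holomorphy of $\bar R$ and its inverse follows from holomorphy of each $r_L$ (\Cref{lemma:stripintoomega}) together with the fact that the transition maps from \Cref{lemma:fixedtofree,lemma:pushup} are visibly biholomorphic. Combining surjectivity, injectivity modulo $\sim$, and local biholomorphy then gives the claimed biholomorphism.
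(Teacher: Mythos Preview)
Your approach is essentially the same as the paper's: both assemble the realization maps $r_L$ into a global map, invoke \Cref{lemma:stripintoomega} for local biholomorphy, \Cref{lemma:movesequence} for injectivity modulo $\sim$, and \Cref{lemma:fixedtofree,lemma:pushup} for compatibility of charts. The one place where the paper is cleaner is the treatment of boundary points (free prongs with imaginary part $0$ or $2\pi$): rather than your limiting argument, the paper simply observes that any such point lies in the \emph{interior} of the union of two overlapping $\Omega_L$'s (as in \Cref{figure:cc2}), so that the interiors of these unions cover $\Omega_\kappa^{ord}$ and the affine transition maps of \Cref{lemma:fixedtofree,lemma:pushup} give $\coprod \Omega_L/\!\sim$ an honest complex manifold structure directly. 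Your limit argument is not wrong in spirit, but it is vaguer than necessary given that this direct observation is available.
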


\begin{proof}
We claim that the topological space $\coprod \Omega_L/\sim$ is a manifold under the system of coordinates provided by $\Omega_L$. This is not quite immediate from what we have shown - the strip coordinate domains are not open, and their interiors do not quite cover $\Omega_\kappa^{ord}$, missing points where some free prong has period with imaginary part $0$ or $2 \pi i$. But such points lie in the interior of the {\em union} of two strip coordinate domains, e.g. as shown in \Cref{figure:cc2}. By \Cref{lemma:fixedtofree,lemma:pushup}, the transition functions between overlapping $\Omega_L, \Omega_{L'}$ are holomorphic (and indeed affine), and hence $\coprod \Omega_L/\sim$ is a complex manifold.

By \Cref{lemma:stripintoomega} and \Cref{lemma:movesequence}, the set of realization maps $r: \Omega_L \to \Omega_\kappa^{ord}$ assemble into a biholomorphism.
\end{proof}

\begin{remark}
    While one can exhibit a deformation retraction showing that an individual set $r(\Omega_L)$ is contractible, it is not the case that all intersections of sets $r(\Omega_L)$ are contractible. Specifically, if $v$ lies on the top right of its strip, and $v'$ lies on the bottom right of the strip above, then there is a three-fold intersection of labeling systems where at most one of $v$ or $v'$ has been moved up or down. This intersection has two components, arising from the different linear orderings on the real parts. To compute the homotopy type of $\Omega_\kappa^{ord}$ as the nerve of a covering, it is therefore necessary to further subdivide the pieces $\Omega_L$ (taking into account the various orderings of the real parts) so as to account for this phenomenon. 
\end{remark}

\section{Winding numbers}\label{section:wnf}

In this section, we begin our study of the monodromy of strata of polynomials (\Cref{mainthm:monodromy}). Our ultimate objective is \Cref{lemma:monodromycontain}, which asserts that the monodromy image $B_n[\kappa] = \bar \rho(\B_n[\kappa])$ lies in the kernel of a certain crossed homomorphism $\phi_r$. This will be constructed as a measure of ``change of winding number'' for arcs on a translation surface; accordingly, we begin with a discussion of the theory of {\em relative winding number functions}. In \Cref{subsection:braidedGL}, we use the theory of winding number functions to give an example of a braid which satisfies the convexity condition enforced by the Gauss-Lucas theorem, but which nevertheless cannot be realized as the braid of root and critical points of any family of polynomials.

\subsection{Winding number functions} To avoid a lengthy digression, we give here an abbreviated account of the theory of winding number functions which will suffice for our purposes; see \cite[Section 2]{strata3} for a fuller discussion.

\begin{definition}[Relative winding number function]
    Let $\C_{n,p}$ denote the surface $\CP^1$ with three sets of marked points: $n$ points $S_r \subset \C$ which we call the {\em roots}, $p$ points $S_c \subset \C$ called the {\em critical points}, and $\infty$. Let $S = S_r \cup S_c$. We allow for the possibility of tracking only roots, and not critical points, and hence we permit $p = 0$. We further endow $\C_{n,p}$ with a {\em weighting}
    \[
    w: S \to \Z
    \]
    for which $w(z) = -1$ for each root, and $w(z)>0$ for each critical point. In the context under study, we think of $w$ as the function that assigns to each point its order as a zero or a pole.
    
    Let $\mathcal A_{n,p}$ denote the set of isotopy classes of properly-embedded smooth oriented arcs, disjoint from all marked points on their interior, that connect some root in $\C$ to $\infty$ (in that order, relative to the orientation). A {\em $\Z/r\Z$ relative winding number function} is a set map
    \[
    \psi: \mathcal A_{n,p} \to \Z/r\Z
    \]
    that satisfies the {\em twist-linearity condition}
    \begin{equation}\label{equation:twistlin}
    \psi(T_c(a)) = \psi(a) + \pair{a,c} \norm{c},
    \end{equation}
    where $c\subset \C_{n,p}$ is a simple closed curve and $\norm{c}$ is determined by the formula
    \begin{equation}\label{normc}
    \norm{c} = 1 +\sum_{z \in \mbox{int}(c)\cap S}w(z),
    \end{equation}
    where the sum runs over the points of $S$ in the interior of $c$ (i.e. the component of $\CP^1 \setminus c$ not containing $\infty$). As usual, $\pair{a,c}$ denotes the algebraic intersection pairing, relative to the specified orientation on $a$ and the orientation on $c$ for which $\infty$ lies to the left. When $r = 0$, we call such an object an {\em integral relative winding number function}.
\end{definition}

\begin{example}[Horizontal winding number function $\psi_T$]
Let $n \ge 2$ be given, let $\kappa$ be a partition of $n-1$, and let $T \in \Omega_{\kappa}$ be a translation surface structure on $\CP^1$. Such $T$ corresponds to a differential $df/f$, and we let $\C_{n,p}(T)$ be the surface with the roots and critical points of $f$ marked. Let $w$ be the weighting given by the order of the corresponding pole or zero of $df/f$. 

$T$ endows $\C_{n,p}(T)$ with an integral relative winding number function $\psi_T$ called the {\em horizontal winding number function}. Let $a \subset \C_{n,p}(T)$ be a properly-embedded smooth oriented arc connecting a pole of $df/f$  to $\infty$. We assign the value $\psi_T(a) \in \Z$ as follows: realize $a$ as an arc on the translation surface $T$ not passing through any of the cone points. As $a$ connects a zero of $f$ to the pole at $\infty$, it runs from left to right on $T$, and as it is properly embedded, it can be isotoped so that it follows a leaf of the horizontal foliation outside of some compact region of $T$. Such a representative carries an integral winding number $wn_T(a) \in \Z$ by measuring the winding of the forward-pointing tangent vector relative to the horizontal vector field (the winding number is {\em integral} because of the condition that the arc coincide with a leaf of the horizontal foliation outside of a compact region).  

\begin{lemma}
The function
\[
\psi_T(a) := wn_T(a)
\]
is a well-defined integral relative winding number function on $\C_{n,p}(T)$. 

Setting $r = \gcd(\kappa)$, the mod-$r$ reduction
\[
\bar{\psi}_T(a) := \psi_T(\tilde a) \pmod r,
\]
where $a \in \C_n(T)$ is an arc and $\tilde a \in \C_{n,p}(T)$ is an arbitrary lift, 
is a well-defined $\Z/r\Z$ relative winding number function on $\C_n(T)$.
\end{lemma}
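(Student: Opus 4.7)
My plan is to prove the two claims in sequence. For the first claim, that $\psi_T$ is a well-defined integral relative winding number function on $\mathcal A_{n,p}$, I would begin with independence of the isotopy representative: two smooth representatives $a_0, a_1$ of the same class, each coinciding with a leaf of the horizontal foliation outside a compact region, can be connected by an isotopy $\{a_t\}$ through such representatives, along which $wn_T(a_t)$ is a continuous $\Z$-valued function of $t$, hence constant.

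For twist-linearity, a Dehn twist $T_c$ applied to $a$ can be realized by grafting $\langle a, c\rangle$ signed copies of $c$ into $a$ at the intersection points, with tangents matched smoothly at junctions. Since winding of the tangent relative to horizontal is additive over such concatenations, $wn_T(T_c(a)) = wn_T(a) + \langle a, c\rangle \cdot wn_T(c)$. The identification $wn_T(c) = \|c\|$ follows from a Gauss--Bonnet calculation in the flat cone metric: a simple loop $c$ bounding a disk $D$ has tangent turning $2\pi + \sum_{p \in D \cap S}(\theta_p - 2\pi)$, where $\theta_p = 2\pi(1 + w(p))$ is the cone angle at the marked point $p$. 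Since the horizontal vector field is parallel in the flat metric (contributing zero rotation along $c$), the winding of tangent relative to horizontal equals $1 + \sum w(z) = \|c\|$.

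For the second claim, the twist-linearity of $\bar\psi_T$ follows by reducing that of $\psi_T$ modulo $r$: the norms computed in $\C_n$ and $\C_{n,p}$ agree mod $r$, since each critical point in the interior contributes $k_j \equiv 0 \pmod r$. The main task is independence of the lift. Two lifts $\tilde a_1, \tilde a_2 \in \mathcal A_{n,p}$ of the same $a \in \mathcal A_n$ are related by a sequence of elementary moves pushing the arc across a single critical point $w_j$. For one such move, I would arrange that $\tilde a_1, \tilde a_2$ agree outside a small disk $D$ around $w_j$ and that both arcs have horizontal tangent where they enter and leave $D$. Under these conditions, the tangent winding across $D$ vanishes for each simple arc (the tangent starts and ends horizontal, and a simple arc cannot accumulate a full turn), so
\[
wn_T(\tilde a_2) - wn_T(\tilde a_1) = -\bigl(\text{rotation of horizontal around the closed loop }\tilde a_2 \cdot \tilde a_1^{-1}\bigr) = k_j,
\]
using that the horizontal vector field has index $-k_j$ at a zero of $df/f$ of order $k_j$. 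Since $r \mid k_j$, this winding change vanishes mod $r$, establishing well-definedness. The main technical point I expect to grapple with is setting up the local tangent-alignment condition consistently across an arbitrary sequence of such crossings, and verifying that signs and orientation conventions assemble correctly in every configuration.
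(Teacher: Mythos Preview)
Your proposal is correct and follows essentially the same architecture as the paper's proof: isotopy invariance via discreteness of integer winding numbers, twist-linearity via the winding number of $c$ (your Gauss--Bonnet computation is equivalent to the paper's Poincar\'e--Hopf appeal), and independence of lift via the local index at a cone point.

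One point worth flagging: in your treatment of isotopy invariance, you assert that two representatives horizontal outside a compact set can be connected \emph{through} such representatives. This is true but not entirely automatic, since an isotopy in $\mathcal A_{n,p}$ may wrap the arc some number of times around a pole (root), and one must check that such a motion can be realized within your class of representatives. The paper sidesteps this by a slightly different organization: it first observes that winding number is invariant under \emph{compactly supported} isotopies by discreteness, and then handles pole-wrapping separately as the special case of twist-linearity where $c$ encircles a single root, so that $\|c\| = 1 + (-1) = 0$ and the winding number is unchanged. This is a minor structural difference, not a gap in your argument, but the paper's ordering avoids having to argue connectivity of the space of ``nice'' representatives.

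For part (iii), your explicit local setup (aligning tangents to horizontal at the boundary of a small disk about $w_j$) is more hands-on than the paper's one-line invocation of Poincar\'e--Hopf, but arrives at the same conclusion $wn_T(\tilde a_2) - wn_T(\tilde a_1) = \pm k_j$. The sign bookkeeping you anticipate is indeed the only real work; the paper simply asserts the index computation and leaves it at that.
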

\begin{proof}
    To see that $\psi_T$ is well-defined, we must check (i) that $wn_T(a)$ is unchanged by an isotopy of $a$, and (ii) that $\psi_T(a)$ satisfies the twist-linearity condition \eqref{equation:twistlin}. To see that $\bar{\psi}_T(a)$ is well-defined, we must further check (iii) that $wn_T(\tilde a)$ is unchanged mod $r$ by an isotopy of $\tilde a$ across a cone point of $T$.

    To establish (i), we recall that $a$ is horizontal except on a compact set. As the winding number of such an arc is integral (and hence discretely-valued), it follows that the winding number is invariant under any compactly-supported isotopy. Under an isotopy with noncompact support, $a$ can wrap around a pole some number of times, potentially altering the winding number. But this is in fact a special case of (ii): winding $a$ around a pole of $T$ is equivalent to applying the Dehn twist around a curve $c$ enclosing this single pole, i.e. for which $\norm{c} = 0$.
   
   To establish (ii), let $c \subset \C_{n,p}$ be a simple closed curve. It follows from the Poincar\'e-Hopf theorem that the winding number of $c$ on $T$ is given by $\norm{c}$ as in \eqref{normc}, as the index of the horizontal vector field at a root or critical point is given by $w$. Applying the Dehn twist $T_c$ to $a$, we see that
   \[
    wn_T(T_c(a)) = wn_T(a) + \pair{a,c} \norm{c}
   \]
   holds, since at each intersection between $a$ and $c$, the twist $T_c(a)$ wraps once around $c$, contributing $\pm \norm{c}$ to the winding number, the sign determined by the sign of the intersection. 

    For (iii), we again invoke the Poincar\'e-Hopf theorem to see that as a curve is isotoped across a zero of index $k$ on a vector field, the winding number changes by $k$. By hypothesis, the order of each zero is divisible by $r$. Thus, after reducing mod $r$, the quantity $wn_T(\tilde a)$ is independent of the choice of lift $\tilde a$ of $a \subset \C_n(T)$ to $\C_{n,p}(T)$. 
\end{proof}
\end{example}

As $B_{n,p}$ acts on the set $\mathcal A_{n,p}$ of arcs, there is an induced action
    \[
    \beta \cdot \psi( a) = \psi(\beta^{-1}a)
    \]
    on the set of relative winding number functions, and hence there is an associated stabilizer subgroup of $B_{n,p}$, which we call the {\em framed braid group}. In the case where we track roots but not critical points, we call such groups {\em $r$-spin braid groups}, by analogy with the theory of ``$r$-spin structures'' and their associated ``$r$-spin mapping class groups'' in higher genus, cf. \cite{strata2}.

\begin{definition}[Framed braid group $B_{n,p}(\psi)$, $r$-spin braid group $B_n(\bar \psi)$]
    Let $\psi$ be an integral relative winding number function on $\C_{n,p}$. The associated {\em framed braid group} $B_{n,p}(\psi)$ is the subgroup of $B_{n,p}$ stabilizing $\psi$ under the above action on the set of integral relative winding number functions.

    Likewise, if $\bar \psi$ is a $\Z/r\Z$ relative winding number function on $\C_n$, the associated {\em $r$-spin braid group} $B_n(\bar\psi)$ is the stabilizer of $\bar \psi$.
\end{definition}

In \Cref{lemma:rspinbraid}, we will see that the monodromy of a stratum $\Conf_n(\C)[\kappa]$ is contained in a certain framed braid group. To establish this, we must digress briefly to give a precise construction of the monodromy homomorphism.

\begin{definition}[Monodromy]\label{definition:monodromy}
Let $\kappa$ be a partition of $n-1$ with $\abs{\kappa}= p$ parts, and let $B_{n,p}$ be the subgroup of $B_{n+p}$ preserving the division of the $n+p$ strands into groups of size $n$ and $p$. Recalling the definition $\B_n[\kappa]:= \pi_1(\Conf_n(\C)[\kappa])$, the {\em monodromy} is a homomorphism
\[
\rho: \B_n[\kappa] \to B_{n,p}
\]
constructed as follows. Let $f \in \Conf_n(\C)[\kappa]$ be chosen as a basepoint, and let $T = df/f$ be the associated translation surface in $\Omega_\kappa$. Fix a choice of {\em marking} (i.e. homeomorphism) $\mu: \C_{n,p} \to T$. Let $\beta: [0,1] \to \Conf_n(\C)[\kappa]$ be a loop based at $f$, which induces a loop in $\Omega_\kappa$, which we will also write $\beta$; we write the image of this latter loop as $\beta(t) = T_t$ with $T_0 = T_1 = T$. The family $\{T_t\}$ of translation surfaces over $[0,1]$ is topologically trivial, and hence there is a well-defined isotopy class of identification $f_t: T_0 \to T_t$ for $t \in [0,1]$, which induces a {\em propagation} $\mu_t: \C_{n,p} \to T_t$ of the marking map. The {\em monodromy} of $\beta$ is the element 
\[
\rho(\beta) = \mu_1^{-1} \mu_0 \in \Mod(\C_{n,p}),
\]
where $\Mod(\C_{n,p})$ denotes the mapping class group of $\C_{n,p}$. As the marking $\mu$ can be enhanced to identify a tangent vector at $\infty \in \C_{n,p}$ with the canonical horizontal direction on translation surfaces in $\Omega_\kappa$, we can identify $\Mod(\C_{n,p})$ with the mapping class group of the $n,p$-times punctured disk $D_{n,p}$, i.e. the subgroup $B_{n,p} \le B_{n+p}$ preserving setwise the roots and critical points. 

That $\rho: \B_n[\kappa] \to B_{n,p}$ is a homomorphism is a consequence of the fact that if $\beta, \gamma$ are loops for which the propagated markings at $t = 1$ are denoted $\mu_\beta, \mu_\gamma: \C_{n,p} \to T$, then $\mu_\gamma \mu_0^{-1} \mu_\beta$ gives a propagation of the marking along the composite path $\beta \gamma$. 

Note that $\rho$ is not completely canonical: it depends on a choice of marking $\mu_0: \C_{n,p} \to T$ (and in particular depends on a choice of basepoint $T \in \Omega_\kappa$). However, it is easy to see that different choices of marking lead to {\em conjugate} monodromy homomorphisms.

Note also that we obtain a reduction 
\[
\bar \rho: \B_n[\kappa] \to B_n
\]
by forgetting the braid of the critical points.
\end{definition}

Define
\[
B_{n,p}[\kappa] := \rho(\B_n[\kappa])
\]
and
\[
B_n[\kappa] := \bar \rho(\B_n[\kappa]).
\]

\begin{lemma}\label{lemma:rspinbraid}
    Let $T \in \Omega_\kappa$ be a basepoint. Under the monodromy map $\rho: \B_n[\kappa] \to B_{n,p}$ based at $T$, there are containments
    \[
    B_{n,p}[\kappa] \le B_{n,p}(\psi_T)
    \]
   and 
    \[
    B_n[\kappa] \le B_n(\bar \psi_T).
    \]
\end{lemma}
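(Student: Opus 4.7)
The strategy is to show that the horizontal winding number function $\psi_T$ is a topological invariant of the translation surface structure which, when propagated along a loop in $\Omega_\kappa$, returns to itself. Let $\beta \in \B_n[\kappa]$ be a loop based at $f$, let $\{T_t\}_{t \in [0,1]}$ be the resulting family of translation surfaces in $\Omega_\kappa$ (with $T_0 = T_1 = T$), and let $\mu_t : \C_{n,p} \to T_t$ denote the propagated markings, so by \Cref{definition:monodromy} we have $\rho(\beta) = \mu_1^{-1}\mu_0$. Fix a reference arc $a \in \mathcal A_{n,p}$, and put $\mu_0(a)$ in horizontal-outside-compact position on $T_0$. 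Carrying $\mu_0(a)$ along by the propagated identifications gives, for each $t$, a representative of $\mu_t(a)$ which is still horizontal outside some (continuously varying) compact set, and so the integer $\psi_{T_t}(\mu_t(a)) = wn_{T_t}(\mu_t(a))$ is defined throughout.

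I would then argue that this integer is locally constant in $t$. The horizontal vector field on $T_t$, viewed on the complement of the cone points, is an oriented line field depending continuously on $t$; the representatives $\mu_t(a)$ depend continuously on $t$; and the winding number of a smooth arc relative to an oriented line field is a continuous real-valued invariant on the space of such data. Combined with the integrality just noted, this forces $t \mapsto \psi_{T_t}(\mu_t(a))$ to be constant on $[0,1]$. Specializing to $t=0,1$ and using $T_0 = T_1 = T$ gives
\[
\mu_0^* \psi_T(a) = \mu_1^* \psi_T(a).
\]
Writing $\mu_1 = \mu_0\rho(\beta)^{-1}$ and unpacking the pullbacks, this rearranges to $\mu_0^*\psi_T = \rho(\beta) \cdot \mu_0^*\psi_T$, where the action is the one defined just before \Cref{lemma:rspinbraid}. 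Under the identification of $\psi_T$ with $\mu_0^*\psi_T$ implicit in the statement, this is exactly $\rho(\beta) \in B_{n,p}(\psi_T)$.

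The $r$-spin containment $B_n[\kappa] \le B_n(\bar\psi_T)$ follows by reducing mod $r$. The only new point is that $\bar\rho$ forgets the braiding of the critical points, so an arc $a \in \mathcal A_n$ must be lifted to an arc $\tilde a \in \mathcal A_{n,p}$ to apply the above; different lifts differ by pushing $\tilde a$ across cone points of orders in $\kappa$, all divisible by $r$, and the third part of the lemma defining $\bar\psi_T$ shows that this does not affect the mod-$r$ value.

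The step I expect to be the main obstacle is the continuity/integrality claim for $\psi_{T_t}(\mu_t(a))$; one needs a clean framework that both allows the arc and the ambient line field to deform and guarantees that representatives remain horizontal at infinity throughout. The cleanest route is probably to argue directly on the trivial smooth family of punctured surfaces underlying $\{T_t\}$, noting that the horizontal line fields form a continuously varying family of sections of the (real) unit tangent bundle over the complement of the punctures/cone points, and that the winding number relative to such a family is a homotopy invariant. Once this local-triviality argument is in place, the twist-linearity axiom \eqref{equation:twistlin} is not re-invoked: it was used only to verify that $\psi_T$ is a bona fide relative winding number function, whereas the present lemma is purely a deformation-theoretic statement.
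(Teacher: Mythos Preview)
Your argument is correct and follows essentially the same approach as the paper: both propagate the marking along a loop, observe that the induced family of winding number data varies continuously but is integer-valued, and conclude it is constant. The only cosmetic difference is that the paper phrases the continuity/discreteness step at the level of the entire winding number function $\psi_{T_t}$ (viewed as a point in a discrete set of relative winding number functions), whereas you carry it out arc-by-arc; your unpacking of $\mu_1 = \mu_0\,\rho(\beta)^{-1}$ and the mod-$r$ reduction for the second containment match the paper's reasoning as well.
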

\begin{proof}
    We assume the notation of \Cref{definition:monodromy}, and consider the monodromy of a loop $\beta$ in $\Conf_n(\C)[\kappa]$. As the marking is propagated along the loop, this induces an identification of the sets $\mathcal A_{n,p}(T_t)$ of the arcs on the translation surfaces $T_t$. It follows that $\beta$ induces a continuously-varying family of winding number functions $\psi_{T_t}$ on the set $\mathcal A_{n,p}$ of arcs on the reference surface $\C_{n,p}$. As the set of winding number functions is a discrete set, it follows that all such winding number functions coincide. In particular, $\psi_{T_0} = \psi_{T_1}$, but from the definitions we have $\psi_{T_1} = \beta \cdot \psi_{T_0}$, showing that $B_{n,p}[\kappa] \le B_{n,p}(\psi_T)$ as claimed. 

    The containment $B_n[\kappa] \le B_n(\bar \psi_T)$ is a straightforward consequence of the fact that the integral relative winding number function $\psi_T$ on $\C_{n,p}(T)$ descends to the $\Z/r\Z$ relative winding number function $\bar \psi_T$ on $\C_n(T)$ under the forgetful map $\C_{n,p}(T) \to C_n(T)$.  
\end{proof}

\subsection{Convexity is not enough: the braided Gauss-Lucas theorem}\label{subsection:braidedGL}
To illustrate \Cref{lemma:rspinbraid}, we give here in \Cref{figure:nonreal} an example of a braid in $B_{n,p}$ that admits a ``convex representative'', i.e. where the $p$-stranded braid of critical points lies inside the convex hull of the $n$-stranded braid of roots for all times $t$, and yet which does not arise from any loop of polynomials.

\begin{figure}[ht]
\labellist
\small
\endlabellist
\includegraphics[scale=0.7]{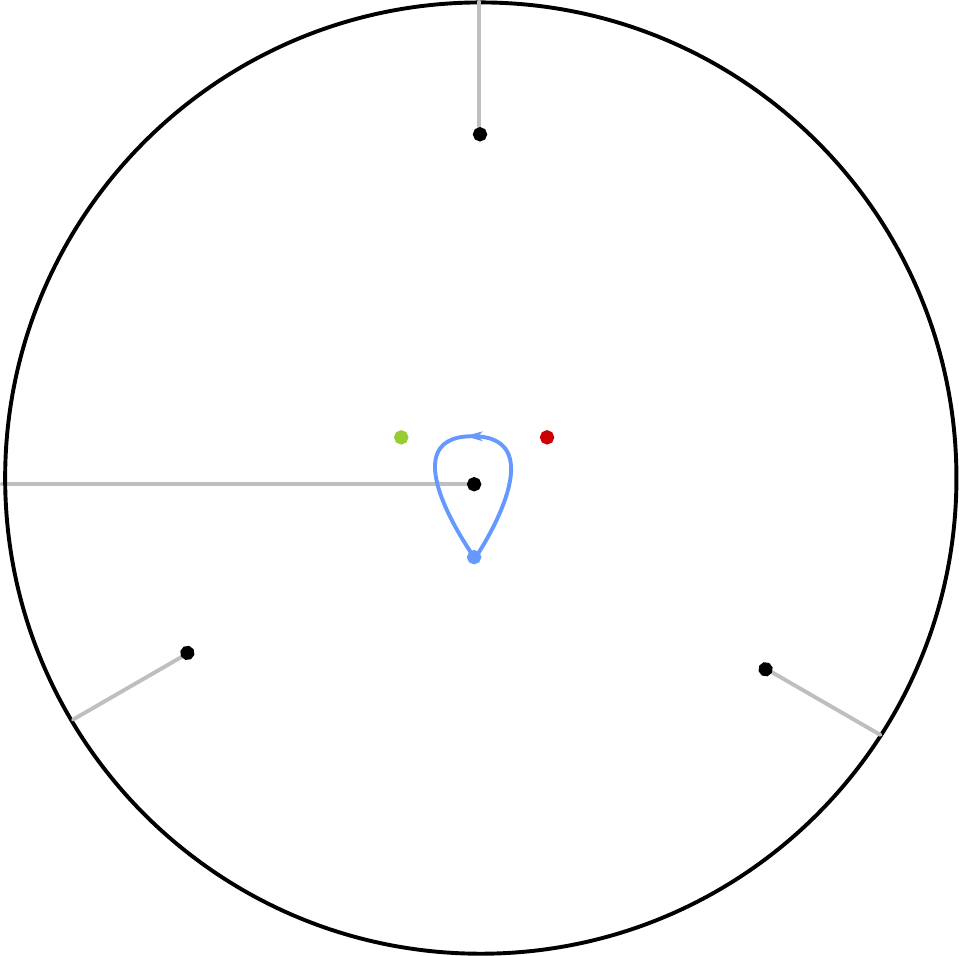}
\caption{A braid which cannot be realized by a family of polynomials in $\Conf_4(\C)[1^3]$. The four roots are illustrated in black, and the three simple critical points are colored. A choice of arcs connecting roots to infinity (depicted here as the entire boundary for visual simplicity) are shown in gray. As the blue point orbits the central root, it alters the winding number of the corresponding arc (as can be seen from the twist-linearity formula), and is thus not contained in the framed braid group $B_{4,3}(\psi_T)$. By \Cref{lemma:rspinbraid}, it follows that this braid cannot be realized by a loop in $\Conf_4(\C)[1^3]$.}
\label{figure:nonreal}
\end{figure}

\subsection{Mod-$r$ winding numbers as crossed homomorphisms}
From here to the end of the paper, we will concentrate on the monodromy $\bar \rho$ of the roots only, leaving a study of the refinement $\rho$ for future work.

Here, we show that the $r$-spin braid group $B_n(\bar \psi_T)$ can be identified with the kernel of a certain crossed homomorphism $\phi_\kappa$, and show that $\phi_\kappa$ has a very simple formula; as this ultimately depends only on $r = \gcd(\kappa)$ and not $\kappa$ itself, in the sequel we will work instead with the equivalent crossed homomorphism $\phi_r$ with the simple formula.

\begin{lemma}\label{lemma:monodromycontain}
    Let $T \in \Omega_\kappa$, and let $S_1, \dots, S_n$ be the strips in a strip decomposition for $T$. For $i = 1, \dots, n$, let $a_i \subset \C_n$ be an arc corresponding to a horizontal leaf on $T$ contained entirely in $S_i$. Then the function
    \begin{align*}
    \phi_\kappa: B_n &\to (\Z/r\Z)^n \\
    \beta &\mapsto \sum \bar\psi_T(\beta^{-1} a_i)e_i 
    \end{align*}
    is equal to the crossed homomorphism 
    \begin{align*}
    \phi_r: B_n &\to (\Z/r\Z)^n \\
    \sigma_i &\mapsto e_{i+1}
    \end{align*}
    (where $B_n$ acts on $(\Z/r\Z)^n$ on the left via the coordinate-permutation action induced from the quotient $B_n \to S_n,\ \beta \mapsto \bar \beta$). 

    In particular, there is a containment
    \[
    B_n[\kappa] \le \ker(\phi_\kappa) = \ker(\phi_r).
    \]
\end{lemma}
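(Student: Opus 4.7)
The plan breaks into three parts: (a) $\phi_\kappa$ is a crossed homomorphism, (b) $\phi_\kappa$ agrees with $\phi_r$ on the standard braid generators $\sigma_i$, and (c) the containment $B_n[\kappa] \le \ker(\phi_\kappa)$. I would first dispose of (c), which follows immediately from \Cref{lemma:rspinbraid}: every $\beta \in B_n[\kappa]$ stabilizes $\bar\psi_T$, so $\bar\psi_T(\beta^{-1}a_i) = \bar\psi_T(a_i)$, and since each $a_i$ is a horizontal leaf its winding number vanishes, forcing $\phi_\kappa(\beta) = 0$. That $\phi_r$ is itself well-defined as a crossed homomorphism is a short direct check on the braid relations (both $\sigma_i\sigma_{i+1}\sigma_i$ and $\sigma_{i+1}\sigma_i\sigma_{i+1}$ evaluate under the cocycle formula to $e_{i+1}+2e_{i+2}$).

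For (a), the cocycle identity $\phi_\kappa(\beta_1\beta_2) = \phi_\kappa(\beta_1) + \beta_1 \cdot \phi_\kappa(\beta_2)$, after unpacking the coordinate-permutation action of $B_n$ on $(\Z/r\Z)^n$, reduces to
\[
\bar\psi_T(\beta_2^{-1}\beta_1^{-1}a_i) - \bar\psi_T(\beta_2^{-1}a_{\bar\beta_1^{-1}(i)}) = \bar\psi_T(\beta_1^{-1}a_i).
\]
Since $\beta_1^{-1}a_i$ and $a_{\bar\beta_1^{-1}(i)}$ share both endpoints, they differ by a pure braid $h$ fixing the common starting puncture, writable as a product of Dehn twists $h = T_{c_1}\cdots T_{c_k}$. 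Iterated application of twist-linearity \eqref{equation:twistlin} expands both sides as a sum of $k$ winding contributions, one per twist. The key comparison is term-by-term: applying $\beta_2^{-1}$ replaces each $c_\ell$ by $\beta_2^{-1}c_\ell$ and each intermediate arc by its $\beta_2^{-1}$-image, but the $\ell$th contribution is invariant, because algebraic intersection is a homeomorphism invariant, and because in the root-only setting $\norm{c} = 1 - \#\{\text{roots in }\interior(c)\}$ depends only on the cardinality of marked points inside $c$, a quantity preserved by the root-permuting action of $\beta_2 \in B_n$.

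With (a) in hand, (b) reduces to checking $\phi_\kappa(\sigma_i)(j) = \bar\psi_T(\sigma_i^{-1}a_j) = \delta_{j,i+1}$ for each $j$. For $j \notin \{i,i+1\}$, I would isotope $a_j$ to lie outside a small disk $U$ supporting a half-twist representative of $\sigma_i^{-1}$, so $\sigma_i^{-1}a_j = a_j$ up to isotopy and the winding vanishes. The cases $j = i, i+1$ are a local computation inside $U$: one draws out the effect of the half-twist on the arcs $a_i, a_{i+1}$ and reads off the tangent rotation against the horizontal foliation on the strips incident to $z_i$ and $z_{i+1}$. With the paper's conventions this yields $\bar\psi_T(\sigma_i^{-1}a_i) = 0$ and $\bar\psi_T(\sigma_i^{-1}a_{i+1}) = 1 \pmod r$, matching $\phi_r(\sigma_i) = e_{i+1}$.

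The main obstacle is step (b). The cocycle identity (a) is a formal consequence of the twist-linearity axiom together with the $B_n$-invariance of $\norm{c}$ in the uniformly weighted root-only setting; by contrast, (b) requires committing to an explicit convention identifying the cyclic order of strips $S_1, \dots, S_n$ at the $\infty$-pole with the standard ordering of strands in the disk $D_n$, and then verifying that the half-twist picks up precisely $+1$ (and not $-1$ or $0$) of winding on $a_{i+1}$. Once both (a) and (b) are settled, $\phi_\kappa$ and $\phi_r$ are crossed homomorphisms agreeing on generators, hence equal on all of $B_n$, completing the proof.
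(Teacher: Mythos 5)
Your proposal is correct and follows the same overall architecture as the paper's proof: establish the cocycle identity, then verify agreement with $\phi_r$ on generators $\sigma_i$ by a local computation against the standard marking, and derive the containment $B_n[\kappa]\le\ker(\phi_\kappa)$ from \Cref{lemma:rspinbraid} combined with the vanishing $\bar\psi_T(a_i)=0$ for horizontal leaves. The one genuine difference lies in how the cocycle identity is justified. After the same reduction, both proofs must establish the invariance $\bar\psi_T(\beta a) - \bar\psi_T(\beta a') = \bar\psi_T(a) - \bar\psi_T(a')$ for arcs $a,a'$ with common endpoints. The paper proves this in one stroke via Poincar\'e--Hopf: the winding number of the concatenated closed curve $a\cup a'$, up to a cusp-smoothing correction of $1$, counts poles enclosed, a quantity manifestly preserved under the braid action. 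You instead write the mapping class carrying $a$ to $a'$ as a product of Dehn twists $T_{c_1}\cdots T_{c_k}$ and iterate twist-linearity, observing that each term $\langle(\cdot)a,c_\ell\rangle\norm{c_\ell}$ is invariant under conjugation by $\beta_2^{-1}$ because algebraic intersection is a homeomorphism invariant and $\norm{c}$, in the root-only uniformly weighted setting, depends only on the number of roots enclosed. Both arguments are valid; the paper's is geometrically more immediate, whereas yours is more formal and avoids Poincar\'e--Hopf at the cost of invoking the fact that two arcs with the same endpoints differ by a \emph{pure} braid. That fact is true but deserves a sentence: the change-of-coordinates mapping class need not a priori be pure, and one must correct it by an element of the stabilizer of the target arc, which surjects onto the symmetric group on the remaining punctures. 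On point (b), you correctly identify the sign/convention check as the essential content; the paper is slightly more explicit, naming $\sigma_i^{-1}a_{i+1}=T_{c_i}a_i$ with $c_i$ the boundary of the standard arc $A_{i,i+1}$ and applying twist-linearity once, but your local isotopy argument amounts to the same computation.
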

\begin{proof}
   To establish that $\phi_\kappa$ is a crossed homomorphism, we make the following observation. If $a$ and $a'$ are two arcs on $T$ with the same beginning and end points, then $a \cup a'$ is an oriented closed curve. There are two cusps at the common endpoints, and otherwise $a \cup a'$ is smoothly immersed. By the Poincar\'e-Hopf theorem, the winding number of $a \cup a'$ (reduced mod $r$, as usual) counts the total number of poles on $T$ enclosed by $a \cup a'$ (up to a correction factor of $1$ coming from the change in winding number arising from smoothing out the cusps). Thus this quantity is invariant under the action of the braid group:
    \[
    \bar \psi_T(\beta a) - \bar \psi_T(\beta a') = \bar\psi_T(a) - \bar\psi_T(a').
    \] 

   Now given $\alpha, \beta \in B_n$, we use this to compute
    \begin{align*}
    \phi_\kappa(\alpha \beta) &= \sum \bar\psi_T(\beta^{-1} \alpha^{-1} a_i)e_i\\
    &= \sum \left(\bar\psi_T(\beta^{-1} \alpha^{-1} a_i)- \bar\psi_T(\beta^{-1} a_{\bar{\alpha^{-1}}i}) + \bar\psi_T(\beta^{-1} a_{\bar{\alpha^{-1}}i})\right) e_i \\
     &= \sum \left(\bar\psi_T(\alpha^{-1}a_i) - \bar\psi_T(a_{\bar{\alpha^{-1}}i}) + \bar\psi_T(\beta^{-1} a_{\bar{\alpha^{-1}}i})\right) e_i.    
    \end{align*}
    Splitting into three vectors, we observe that the first is $\phi_\kappa(\alpha)$, the second is identically zero (each component $\bar\psi_T(a_i)$ is zero since $a_i$ is a leaf of the horizontal foliation), and the third is identified as $\alpha \cdot \phi_\kappa(\beta)$. Thus $\phi_\kappa$ is a crossed homomorphism as claimed.

 To identify $\phi_\kappa$ with $\phi_r$, it suffices to check equality on the standard generators $\sigma_i$. Under the {\em standard marking} shown in \Cref{figure:refsurface} below, we see that $\sigma_i^{-1}$ takes $a_i$ to $a_{i+1}$ and $a_{i+1}$ to $T_{c_i}a_i$, where $c_i$ is the boundary of the {\em standard arc} $A_{i, i+1}$ connecting marked points $i$ and $i+1$ (cf. \Cref{definition:standardarc} below). By the twist-linearity formula, it follows that
 \[
 \psi_T(\sigma_i^{-1} a_j) = \begin{cases}
     0 & j \ne i+1\\ 1 & j = i+1
 \end{cases}
 \]
 from which the claim follows.
\end{proof}

\section{Constructing monodromy elements}\label{section:construct}

In this section, we ``fill out'' the monodromy image of $\Conf_n(\C)[\kappa]$, showing that the image $B_n[\kappa]$ contains the subgroup $\Gamma_n^r$ of ``basic $(r+1)^{st}$ twists''. This group is defined in \Cref{definition:rthtwist} below; we exhibit some monodromy elements in \Cref{lemma:monodromygens}, and after some group theory carried out in \Cref{lemma:gcd}, we show the containment $\Gamma_n^r \le B_n[\kappa]$ in \Cref{lemma:rthroots}.

\begin{definition}[Basic $(r+1)^{st}$ twist $\Sigma_{a;r}$, subgroup $\Gamma_n^r$]\label{definition:rthtwist}
For $n\ge a+r$, the {\em basic $(r+1)^{st}$ twist} $\Sigma_{a;r} \in B_n$ is defined to be the element
\[
\Sigma_{a;r} = \sigma_a \dots \sigma_{a + r - 1}.
\]
The {\em basic $(r+1)^{st}$ twist group} is the subgroup
\[
\Gamma_n^r \le B_n
\]
generated by the set of basic $(r+1)^{st}$ twists.
\end{definition}

\begin{remark}\label{remark:undercrossing}
    Pictorially, the basic twist $\Sigma_{a;r}$ is given by taking the strand in position $a$ and crossing it over the next $r$ strands to the right, and the inverse $\Sigma_{a;r}^{-1}$ is the same but with the strand crossing over $r$ strands to the left. In particular, {\em $\beta \in \Gamma_n^r$ if and only if it admits a diagram for which each overcrossing passes over a multiple of $r$ strands below it.}
\end{remark}

\begin{lemma}
    \label{lemma:monodromygens}
    Let
    \[
    \kappa = \{k_1, \dots, k_p\}
    \]
    be a partition of $n-1$. Then $B_n[\kappa]$ contains the elements 
    \[
    \Sigma_{1;k_1},\quad \Sigma_{k_1+1,k_2},\quad \dots,\quad  \Sigma_{k_1 + \dots + k_{p-1}+1;k_p}.
    \]
\end{lemma}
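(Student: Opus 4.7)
The plan is to construct, for each $j = 1, \ldots, p$, an explicit loop $\gamma_j$ in $\Omega_\kappa$ whose monodromy under $\bar\rho$ is the basic twist $\Sigma_{a_j;k_j}$, where $a_j = k_1 + \cdots + k_{j-1} + 1$. By the surjectivity of $\B_n[\kappa] \twoheadrightarrow \pi_1^{orb}(\Omega_\kappa)$ coming from \eqref{equation:les}, any such loop lifts to an element of $\B_n[\kappa]$ with the required image in $B_n[\kappa]$.

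First I would set up the basepoint and labeling system. By \Cref{lemma:horizfol}, the $k_j+1$ left prongs of the cone point $w_j$ of order $k_j$ terminate at $k_j+1$ distinct zeros of $f$. Choose a basepoint $T \in \Omega_\kappa^{ord}$ with a connected labeling system $L$ in which these zeros are exactly $z_{a_j}, z_{a_j+1}, \ldots, z_{a_j+k_j}$, the corresponding strips $S_{a_j}, \ldots, S_{a_j+k_j}$ are vertically-adjacent in the strip ordering, and the cyclic order on prongs at $w_j$ matches the strip order. Note that $z_{a_j+k_j} = z_{a_{j+1}}$ is shared between the prong sets of $w_j$ and $w_{j+1}$; for different $j$ one may use distinct basepoints (or transition between labeling systems via \Cref{lemma:movesequence}), since $B_n[\kappa]$ is basepoint-independent.

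Next I would construct $\gamma_j$ as a cyclic rotation around $w_j$. Label the $k_j+1$ left prongs of $w_j$ cyclically as $p_0, \ldots, p_{k_j}$ with $p_i$ initially the fixed prong of $S_{a_j+i}$. The loop deforms the translation surface through a one-parameter family of strip decompositions so that at the endpoint, $p_{(i+1)\bmod(k_j+1)}$ is the fixed prong of $S_{a_j+i}$. In $\Omega_\kappa^{ord}$ this is a path whose endpoints differ by a cyclic relabeling of $\{z_{a_j},\ldots, z_{a_j+k_j}\}$, but forgetting labels it descends to a loop in $\Omega_\kappa$. Locally near $w_j$, this loop is modeled by $\kappa = \{k_j\}$, $n = k_j+1$, $f(z) = z^{k_j+1} - c$, with $c$ traversing a loop around the origin: the $k_j+1$ roots then rotate cyclically and realize the basic twist $\Sigma_{1;k_j}$ on $B_{k_j+1}$.

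The main obstacle is the final step: verifying that the monodromy is precisely $\Sigma_{a_j;k_j}$ rather than some other braid with the same underlying cyclic permutation. Via \Cref{prop:corresp} one transports $\gamma_j$ to a family of polynomials and then argues two things: (i) the family is supported near $w_j$, so the remaining $n - k_j - 1$ zeros are undisturbed and the induced braid factors through $B_{k_j+1} \hookrightarrow B_n$ acting on strands $a_j, \ldots, a_j + k_j$; and (ii) the local model $z^{k_j+1} - c$ produces $\Sigma_{1;k_j}$, which can be verified directly by parametrizing the $k_j+1$ roots as $c^{1/(k_j+1)} e^{2\pi i(t + m)/(k_j+1)}$ for $m = 0,\ldots, k_j$ and reading off the braid as $t$ varies over $[0,1]$. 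An alternative bookkeeping approach is to apply \Cref{lemma:fixedtofree} iteratively $k_j+1$ times to track strip coordinates through the loop, confirming both that we return to the original labeling system and that the explicit motion of the $z_i$ in the plane executes the basic twist.
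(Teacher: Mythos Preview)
Your approach is essentially the same as the paper's: construct loops in $\Omega_\kappa$ by cyclically rotating the left prongs at each cone point $w_j$, compute the monodromy by tracking markings, and invoke the surjection \eqref{equation:les} to lift. The paper carries this out via explicit figures (a sequence of cut/paste moves: recut, push free prongs down, reorder strips, recut) and reads the monodromy off as $\Sigma_{a_j;k_j}^{-1}$ by comparing the marking before and after.

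One caution about your step (i). Transporting $\gamma_j$ back to a family of polynomials and asserting that ``the family is supported near $w_j$'' is not how the paper argues, and is not obviously justified: the passage $\omega \mapsto f$ goes through $f = \exp(\int \omega)$, which is highly non-local, so a deformation that looks local on the translation surface need not fix the remaining roots in $\C$. The paper avoids this entirely by computing the monodromy on the translation-surface side, via propagation of the marking (cf.\ \Cref{definition:monodromy}). Your ``alternative bookkeeping approach'' --- tracking strip coordinates and the marked arcs through the sequence of type~1 and type~2 moves --- is exactly this, and is the argument you should commit to. With that choice, your proof and the paper's coincide.
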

\begin{proof}
    \begin{figure}[ht]
\labellist
\small
\pinlabel $S_1$ [r] at -2.83 19.84
\pinlabel $S_{k_1+1}$ [r] at -2.83 76.53
\pinlabel $S_{k_1+2}$ [r] at -2.83 113.38
\pinlabel $S_{k_1+k_2+1}$ [r] at -2.83 175.73
\pinlabel $S_n$ [r] at -2.83 317.45
\endlabellist
\includegraphics[scale=0.7]{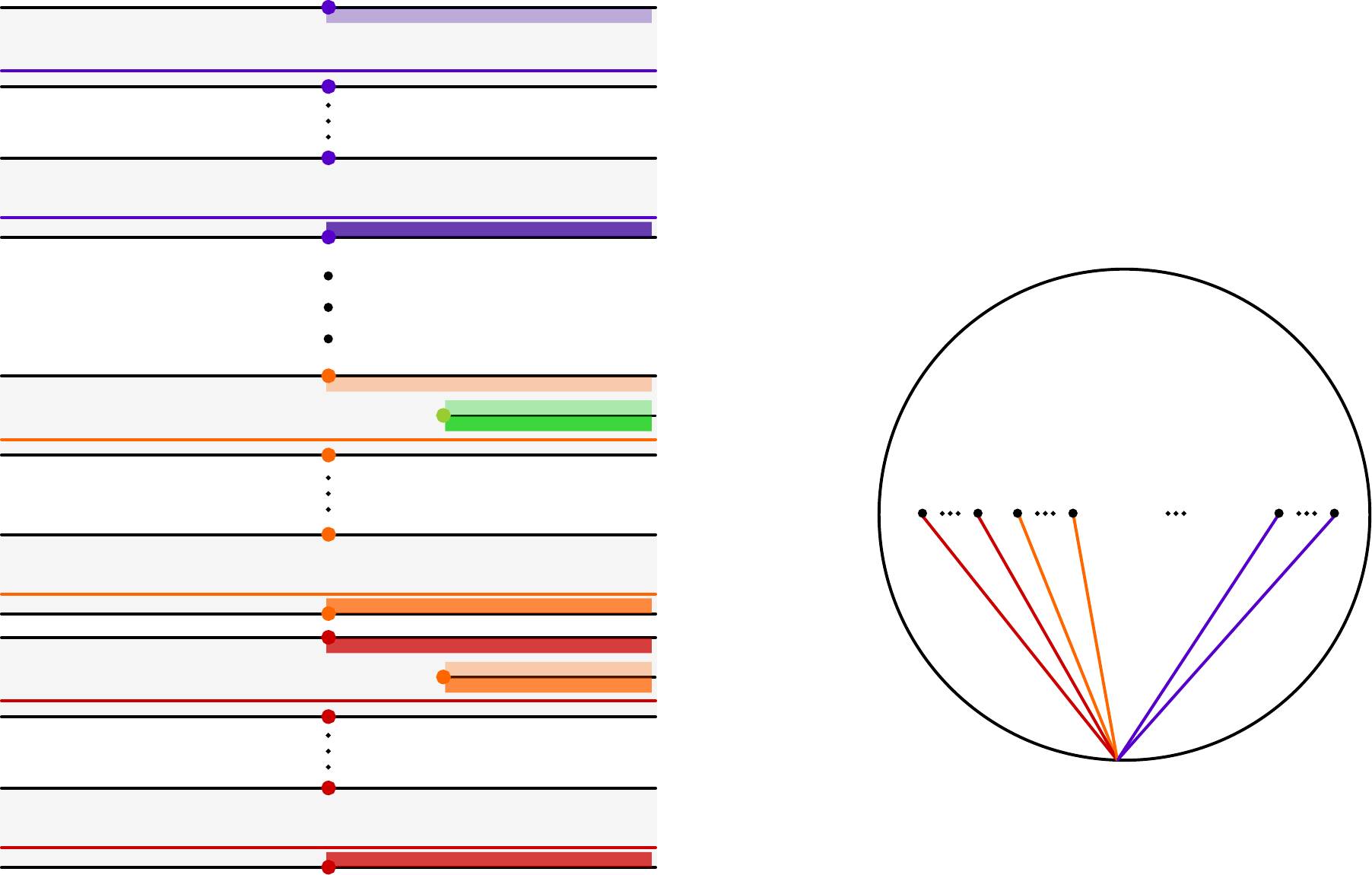}
\caption{The standard marking. At left, the reference translation surface $T_\kappa$ for the stratum $\kappa = \{k_1, \dots, k_p\}$. The top left and bottom left of each strip are identified. There are $p$ blocks of strips, each one corresponding to a given cone point (depicted as the colored points). Within each block of strips of the same color, the top right segment of $S_i$ is identified to the bottom right on $S_{i+1}$, with remaining gluing instructions specified by color as in \Cref{figure:sd1}. All but the bottom-most cone point (red in the figure) have one free prong in the top strip of the block below. The colored horizontal lines equip $T_\kappa$ with a marking. At right, the corresponding standard marking of the $n$-punctured disk.}
\label{figure:refsurface}
\end{figure}

Consider the ``standard marking'' of the translation surface $T_\kappa \in \Omega_\kappa$ shown in \Cref{figure:refsurface}. In \Cref{figure:monodromyloop}, we exhibit loops in $\Omega_\kappa$ based at $T_\kappa$. By comparing markings of the surface before and after, we compute their monodromy in $B_n$ to be $\Sigma_{k_1 + \dots + k_{i-1} + 1; k_i}^{-1}$. Recalling from \eqref{equation:les} that the projection $\B_n[\kappa] \to \pi_1^{ord}(\Omega_\kappa)$ is surjective, we see that we can lift these loops to $\Conf_n(\C)[\kappa]$, realizing them as elements of the monodromy group $B_n[\kappa]$.
\end{proof}
    \begin{figure}[ht]
\labellist
\small
\pinlabel recut [b] at 277.10 544.20
\pinlabel push [br] at 282.77 422.32
\pinlabel reorder [b] at 279.93 320.28
\pinlabel recut [br] at 279.93 195.57
\pinlabel change~of~marking at 435 120
\endlabellist
\includegraphics[scale=0.7]{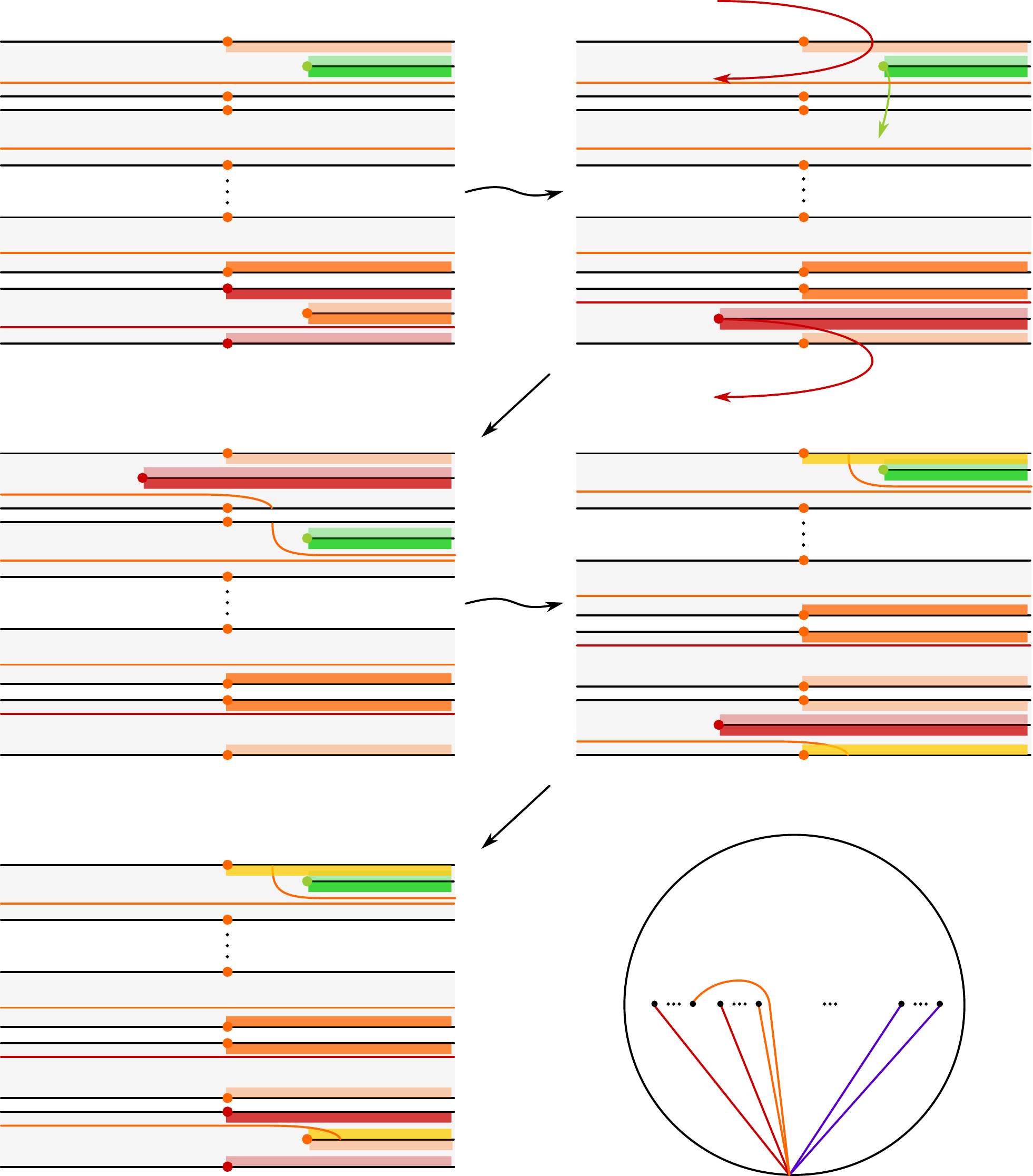}
\caption{We depict the block of strips on $T_\kappa$ between $S_{k_1 + \dots + k_{i-1} + 1}$ and $S_{k_1 + \dots + k_{i}}$. Reading lexicographically, we first recut the bottom strip $S_{k_1 + \dots + k_{i-1} + 1}$ so that it is bounded by the same cone point as the rest. Then we push each of the free prongs down one strip. Next, we apply a cut/paste move to reorder the strips, moving each one up one spot. Finally, we recut the bottom strip once again so that it is bounded by the other cone point. Note that in the case of the bottom block, there is no free prong in the bottom strip, in which case we skip the recutting steps, and in the case of the top block, there is no free prong in the top strip, and there is a slightly different picture (omitted). The picture at bottom right depicts the change of marking, i.e. the monodromy of the loop.}
\label{figure:monodromyloop}
\end{figure}

\begin{lemma}
    \label{lemma:gcd}
    Let $a,b$ be integers, and let $G \le B_{a+b+1}$ be the subgroup generated by $\Sigma_{1;a}$ and $\Sigma_{a+1,b}$. Then $\Sigma_{1;\gcd(a,b)} \in G$.
\end{lemma}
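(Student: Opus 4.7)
The plan is to implement the Euclidean algorithm on $(a,b)$ inside $G$ by means of the conjugation action of the cyclic element $\delta := XY = \sigma_1 \sigma_2 \cdots \sigma_{a+b}$ of $B_{a+b+1}$, which repositions basic twists so that the hypotheses of the lemma can be re-applied to a smaller pair. I would induct on $a+b$, with the immediate base case $a = b = 1$ (where $X = \sigma_1 = \Sigma_{1;1}$ already lies in $G$). The key algebraic input is the classical shift formula
\[
\delta\, \sigma_k\, \delta^{-1} = \sigma_{k+1} \qquad (1 \le k \le a+b-1),
\]
which can be verified by a separate induction on $a+b$ after writing $\delta = \sigma_1 (\sigma_2 \cdots \sigma_{a+b})$ and using the commutations of $\sigma_1$ with $\sigma_j$ for $j \ge 3$ together with $\sigma_1 \sigma_2 \sigma_1 = \sigma_2 \sigma_1 \sigma_2$. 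An immediate consequence is that, for any basic twist $\Sigma_{s;t}$ with $s + t \le a+b$, one has $\delta\, \Sigma_{s;t}\, \delta^{-1} = \Sigma_{s+1;t}$, and $\delta^{-1} \Sigma_{s;t} \delta = \Sigma_{s-1;t}$ whenever $s \ge 2$.

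For the inductive step, the case $a = b$ gives $X = \Sigma_{1;\gcd(a,b)} \in G$ directly, so assume $a \ne b$; without loss of generality take $a > b$ (the case $a < b$ is handled analogously by replacing the Euclidean reduction $(a-b, b)$ with $(a, b-a)$ and conjugating by $\delta^a$ in place of $\delta^b$). Shifting $X$ by $b$ yields $\Sigma_{b+1;a} = \delta^b X \delta^{-b} \in G$. The subword factorization
\[
\Sigma_{b+1;a} = \Sigma_{b+1;a-b} \cdot \Sigma_{a+1;b},
\]
combined with $Y = \Sigma_{a+1;b} \in G$, produces $\Sigma_{b+1; a-b} \in G$. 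The pair $\{\Sigma_{b+1; a-b},\, \Sigma_{a+1; b}\}$ is then the generating pair of the lemma for the parameters $(a-b,\, b)$, realized inside $B_{a+b+1}$ via the shift-by-$b$ embedding of $B_{a+1}$. Since $(a-b) + b = a < a+b$, the inductive hypothesis applied inside this shifted copy yields $\Sigma_{b+1;\gcd(a,b)} \in G$ (using $\gcd(a-b,b) = \gcd(a,b)$). A final conjugation gives
\[
\Sigma_{1;\gcd(a,b)} = \delta^{-b}\, \Sigma_{b+1;\gcd(a,b)}\, \delta^{b} \in G,
\]
completing the induction.

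The main obstacle I anticipate is conceptual rather than computational: one must spot that $XY$ is precisely the cyclic element whose shift action interpolates between different positions of basic twists, allowing the asymmetric generating pair $(X,Y)$ to be re-aligned into the generating pair of the lemma for a smaller parameter pair, a Euclidean step executed by induction, and the result transported back to position $1$. Once this structural observation is in hand, everything else reduces to routine braid-relation manipulations.
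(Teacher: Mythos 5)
Your proposal is correct and follows essentially the same route as the paper: both identify $XY = \sigma_1\cdots\sigma_{a+b} = \Sigma_{1;a+b}$ as a shifting element whose conjugation action translates the starting index of a basic twist, and both use this shift together with subword cancellation to implement the Euclidean algorithm on $(a,b)$. The paper states this tersely while you organize it as a clean induction with an explicit base case; the underlying ideas coincide.
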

\begin{proof}
    Observe that $\Sigma_{1;a}\Sigma_{a+1;b} = \Sigma_{1;a+b}$, and that
    \begin{equation}\label{equation:conjformula}
    \Sigma_{1;a+b}^j\ \Sigma_{p;q}\ \Sigma_{1;a+b}^{-j} = \Sigma_{p+j;q}
    \end{equation}
    so long as the indices $p+j, \dots, p+j + q -1$ lie on the interval $[1,a+b]$. Thus by conjugating, we can shift the first index of any $\Sigma_{p;q}$ to any valid position, and by taking $\Sigma_{p;q}^{-1} \Sigma_{p;r}$ for $q > r$ and conjugating, we obtain $\Sigma_{p;r-q}$ from $\Sigma_{p;q}$ and $\Sigma_{p;r}$. By repeatedly shifting and deleting initial segments in this way, we can perform the Euclidean algorithm on $a,b$, eventually obtaining $\Sigma_{1;\gcd(a,b)}$.
\end{proof}

\begin{lemma}
    \label{lemma:rthroots}
    For any $n \ge 2$ and any partition $\kappa$ of $n-1$, the group $B_n[\kappa]$ contains the elements $\Sigma_{1;r}$ and $\Sigma_{1;n-1}$, and hence every basic $(r+1)^{st}$ twist $\Sigma_{k;r}$. Thus,
    \[
    \Gamma_n^r \le B_n[\kappa].
    \]
\end{lemma}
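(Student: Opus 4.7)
The plan is to combine \Cref{lemma:monodromygens} with the Euclidean-algorithm machinery of \Cref{lemma:gcd}. Set $K_i \coloneq k_1 + \dots + k_i$, so that $K_p = n-1$. By \Cref{lemma:monodromygens}, each of the $p$ elements $\Sigma_{K_{i-1}+1;k_i}$ lies in $B_n[\kappa]$. Their ordered product telescopes to the word $\sigma_1 \sigma_2 \cdots \sigma_{n-1} = \Sigma_{1;n-1}$, so $\Sigma_{1;n-1} \in B_n[\kappa]$ is immediate, handling one of the two required elements.

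To produce $\Sigma_{1;r}$, I would induct on $i \le p$, carrying the stronger invariant that both $\Sigma_{1;g_i}$ and $\Sigma_{1;K_i}$ belong to $B_n[\kappa]$, where $g_i \coloneq \gcd(k_1, \dots, k_i)$. The base case $i = 1$ is trivial since $g_1 = k_1 = K_1$. For the inductive step, the newly acquired generator $\Sigma_{K_i+1;k_{i+1}}$ is not adjacent to $\Sigma_{1;g_i}$ unless $g_i = K_i$, so I first conjugate $\Sigma_{1;g_i}$ by $\Sigma_{1;K_i}^{K_i - g_i}$, using \eqref{equation:conjformula} (valid because the shifted indices remain inside $[1, K_i]$), to obtain $\Sigma_{K_i - g_i + 1;\, g_i}$. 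Now \Cref{lemma:gcd} applied with all indices shifted uniformly by $K_i - g_i$ — which amounts to running the same Euclidean algorithm with $\Sigma_{K_i - g_i + 1;\, g_i + k_{i+1}}$ serving as the conjugator in place of $\Sigma_{1;a+b}$ — yields $\Sigma_{K_i - g_i + 1;\, g_{i+1}}$, where $g_{i+1} = \gcd(g_i, k_{i+1}) = \gcd(k_1, \dots, k_{i+1})$. Conjugating back by an appropriate power of $\Sigma_{1;K_{i+1}} = \Sigma_{1;K_i}\,\Sigma_{K_i+1;k_{i+1}}$ slides this element to $\Sigma_{1;g_{i+1}}$ and simultaneously records $\Sigma_{1;K_{i+1}} \in B_n[\kappa]$, closing the induction. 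At $i = p$ this delivers $\Sigma_{1;r} \in B_n[\kappa]$.

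With $\Sigma_{1;r}$ and $\Sigma_{1;n-1}$ both in $B_n[\kappa]$, I conjugate $\Sigma_{1;r}$ by $\Sigma_{1;n-1}^j$ for $0 \le j \le n - r - 1$, which by \eqref{equation:conjformula} gives every basic twist $\Sigma_{j+1;r}$. These are by definition a generating set for $\Gamma_n^r$, establishing $\Gamma_n^r \le B_n[\kappa]$.

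The one subtle point is the inductive gcd step: one must conjugate $\Sigma_{1;g_i}$ to the far right end of the ``active interval'' $[1, K_i]$ in order to make it adjacent to the new generator before invoking \Cref{lemma:gcd}, and then slide the resulting element back so it again starts at index $1$. The auxiliary invariant $\Sigma_{1;K_i} \in B_n[\kappa]$ is precisely what makes both shifts available inside the monodromy group, so it is essential to carry this second statement through the induction.
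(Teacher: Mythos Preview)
Your proof is correct and follows the same approach as the paper's. The paper's argument is terser---it simply says ``by repeated application of \Cref{lemma:gcd}'' without spelling out the induction---whereas you make explicit the auxiliary invariant $\Sigma_{1;K_i}\in B_n[\kappa]$ needed to slide $\Sigma_{1;g_i}$ into adjacency with the next generator, which is exactly the detail hidden in the paper's phrase ``repeated application.''
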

\begin{proof}
    By \Cref{lemma:monodromygens}, $B_n[\kappa]$ contains the elements
    \[
    \Sigma_{1;k_1},\quad \Sigma_{k_1+1,k_2},\quad \dots,\quad  \Sigma_{k_1 + \dots + k_{p-1}-1;k_p}.
    \]
    By repeated application of \Cref{lemma:gcd}, one sees that $\Sigma_{1;r} \in B_n[\kappa]$, and also
    \[
    \Sigma_{1;n-1} = \Sigma_{1;k_1}\ \Sigma_{k_1+1,k_2}\ \dots \  \Sigma_{k_1 + \dots + k_{p-1}-1;k_p} \in B_n[\kappa].
    \]
    By \eqref{equation:conjformula}, $B_n[\kappa]$ thus contains all $\Sigma_{k;r}$.
\end{proof}

\begin{corollary}
    Let $n \ge 2$ be given, and let $\kappa$ be a partition of $n-1$ for which $r = \gcd(\kappa) = 1$. Then the monodromy map $\bar \rho: \B_n[\kappa] \to B_n$ is surjective.
\end{corollary}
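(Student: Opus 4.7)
The plan is to deduce this essentially immediately from \Cref{lemma:rthroots} together with the definition of the basic $(r+1)^{st}$ twist group. The key observation is that when $r=1$, the group $\Gamma_n^1$ is large enough to be all of $B_n$.

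Specifically, I would first unpack \Cref{definition:rthtwist} in the case $r=1$: the basic $2^{nd}$ twist $\Sigma_{k;1}$ is the single-letter product $\sigma_k$. Since $\Gamma_n^1$ is generated by all such $\Sigma_{k;1}$ for $1 \le k \le n-1$, it contains the standard generating set $\{\sigma_1, \dots, \sigma_{n-1}\}$ of $B_n$, hence $\Gamma_n^1 = B_n$.

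Next I would invoke \Cref{lemma:rthroots} with $r = \gcd(\kappa) = 1$ to obtain the containment $\Gamma_n^1 \le B_n[\kappa]$. Combining with the previous step gives $B_n \le B_n[\kappa] \le B_n$, so $B_n[\kappa] = B_n$. Since $B_n[\kappa] := \bar\rho(\B_n[\kappa])$ by definition, this is exactly the assertion that $\bar \rho$ is surjective.

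There is really no obstacle here — the work has all been done in \Cref{lemma:monodromygens,lemma:gcd,lemma:rthroots}. The only thing to check is the trivial identification $\Sigma_{k;1} = \sigma_k$, after which the corollary is immediate.
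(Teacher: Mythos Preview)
Your proof is correct and follows essentially the same approach as the paper: invoke \Cref{lemma:rthroots} to get $\Gamma_n^1 \le B_n[\kappa]$, and observe that for $r=1$ the basic twists $\Sigma_{k;1}$ are precisely the standard generators $\sigma_k$, so $\Gamma_n^1 = B_n$. The paper's own proof is a one-sentence version of exactly this argument.
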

\begin{proof}
    By \Cref{lemma:rthroots}, the image of $\bar\rho$ contains all basic $(r+1)^{st}$ twists $\Sigma_{k;r}$, but for $r = 1$ these are just the standard half-twist generators of $B_n$.
\end{proof}



\section{Generating $\ker(\phi_r)$}\label{section:generating}

In the previous two sections, we have seen how the monodromy image $B_n[\kappa]$ is contained in the kernel of a crossed homomorphism $\phi_r$, and conversely {\em contains} the subgroup $\Gamma_n^r$ of basic $(r+1)^{st}$ twists. Here, we complete the circle of containments, showing that when $n$ is sufficiently large compared to $r$, the kernel of $\phi_r$ is generated by basic $(r+1)^{st}$ twists. 

 We must first specify what is meant by ``sufficiently large''. Define
    \begin{equation}\label{equation:nord}
    n_0(r,d) = \begin{cases}
            8   & r = 2\\
            (6+d)r & r \mbox{ odd}\\
            (12+d)r & r\ge 4 \mbox{ even}.
    \end{cases}
    \end{equation}

\begin{theorem}
    \label{theorem:genkernel}
    Let $n \ge 3$ and $r \ge 2$ be given; let $d \in \{0,1,2\}$ be the remainder of $n/3$. Then for $n \ge n_0(r,d)$, the kernel of $\phi_r$ is generated by $\sigma_1 \dots \sigma_r$ and $\sigma_1 \dots \sigma_{n-1}$. 
\end{theorem}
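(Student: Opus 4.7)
Set $g = \sigma_1 \cdots \sigma_r$ and $\delta = \sigma_1 \cdots \sigma_{n-1}$, and let $H = \langle g, \delta \rangle \leq B_n$. The containment $H \subseteq \ker(\phi_r)$ is essentially immediate: both generators lie in $B_n[\kappa]$ by \Cref{lemma:rthroots}, and $B_n[\kappa] \leq \ker(\phi_r)$ by \Cref{lemma:monodromycontain}. The conjugation formula \eqref{equation:conjformula} gives $\delta^k g \delta^{-k} = \Sigma_{k+1;r}$ for $0 \leq k \leq n-r-1$, so $H$ contains every basic $(r+1)^{st}$ twist, and therefore $H = \langle \Gamma_n^r, \delta \rangle$. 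The substantive content of the theorem is the reverse inclusion $\ker(\phi_r) \subseteq \langle \Gamma_n^r, \delta \rangle$ under the hypothesis $n \geq n_0(r,d)$.

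Following the outline given in the introduction, my plan is to introduce an auxiliary crossed homomorphism $\Upsilon_r : B_n \to M$, valued in an appropriate $B_n$-module $M$, defined combinatorially by a ``virtual undercrossing count'' on a braid diagram. The first technical step is to verify that $\Upsilon_r$ is well-defined on $B_n$ (invariant under the braid relations), is a crossed homomorphism, and satisfies $\ker(\Upsilon_r) = \ker(\phi_r)$. The identification of kernels would be proved by matching the two crossed homomorphisms on the standard generators $\sigma_i$, where the twist-linearity relation \eqref{equation:twistlin} translates the analytic change-in-winding-number that defines $\phi_r$ into the purely diagrammatic undercrossing count that defines $\Upsilon_r$.

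The core of the proof is the factorization algorithm \Cref{lemma:factorpure}: starting from any $\beta \in \ker(\Upsilon_r)$, construct an expression of $\beta$ as a product of $\Sigma_{a;r}^{\pm 1}$'s and $\delta^{\pm 1}$. Operationally, one scans a braid diagram for $\beta$, shifts individual crossings by conjugation with $\delta$ into a standard position, and groups them $r$ at a time to form basic twists $g^{\pm 1}$. The kernel condition $\Upsilon_r(\beta) = 0$ ensures that each strand's running undercrossing count vanishes modulo $r$, so every crossing can eventually be absorbed into a complete basic twist; without this condition a leftover ``fractional'' crossing would obstruct the process. The bounds $n_0(r,d)$ defined in \eqref{equation:nord} ensure that at each step enough free strands are available around the active position for the shift-and-group moves to be performed; the parity dependence (odd $r$ versus even $r \geq 4$) arises from a subtle difference in how local undercrossing adjustments propagate across the diagram. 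The main obstacle will be verifying that this algorithm actually terminates and never gets stuck, which requires careful combinatorics to rule out pathological configurations; this is precisely where the specific lower bound $n \geq n_0(r,d)$ enters.
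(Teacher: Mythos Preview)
Your high-level outline is right and matches the paper: introduce the virtual-undercrossing crossed homomorphism $\Upsilon_r$, identify $\ker(\Upsilon_r)=\ker(\phi_r)$, and then run a factorization algorithm. But your description of the algorithm is not what actually works, and two structural steps are missing.

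First, the mechanism. The paper's factorization algorithm (\Cref{prop:factorize}) does not ``shift crossings by conjugation with $\delta$'' or ``group them $r$ at a time''. It treats the $n-s$ strands \emph{not} used by a given sub-braid as a reservoir of physical ``virtual'' strands: at each crossing of the sub-braid one borrows enough of these extra strands so that exactly $r$ strands pass under, turning a single $\sigma_i^{\pm1}$ into a genuine $\Sigma_{a;r}^{\pm1}$. The kernel condition guarantees the borrowed strands all return to their original positions at the end. Crucially this only applies to \emph{pure} braids supported on at most $s\le n/r$ strands under a standard embedding; it cannot be run directly on an arbitrary $\beta\in\ker(\phi_r)$.

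Second, the missing reduction. To handle a general pure braid in $\ker(\phi_r)$, the paper (\Cref{lemma:factorpure}) iteratively rewrites its initial segment as a product of commuting small-support blocks---the $A_{ij}$ for $r=2$, and the elements $A_{i;a,b,c}$ supported on triples of strands for $r\ge 3$---and applies \Cref{prop:factorize} to each block. The trichotomy $r=2$ / $r$ odd / $r\ge 4$ even, and hence the constants in $n_0(r,d)$, come from this rewriting (parity obstructions force linking of odd triples when $r$ is even), not from ``how undercrossing adjustments propagate''. Your sketch gives no mechanism for producing such small-support pieces.

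Third, you never pass from pure braids to all of $\ker(\phi_r)$. The paper handles this separately (\Cref{theorem:gammakernel}) by computing the images of $\Gamma_n^r$ and $\ker(\phi_r)$ in $S_n$ and showing they agree (both equal $A_n$ or $S_n$ according to the parity of $r$). Without this step you only get $PB_n\cap\ker(\phi_r)\le\Gamma_n^r$, not the full theorem.
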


The material of this section is purely braid-theoretic and does not require any knowledge e.g. of winding number functions. The outline is as follows. In \Cref{subsection:virtual}, we discuss a new crossed homomorphism $\Upsilon_r$, which can be computed graphically given a braid diagram as a count of ``virtual undercrossings''; we show in \Cref{lemma:equivalent} that $\Upsilon_r = \phi_r$. In \Cref{subsection:factorize}, we use this graphical reformulation to give an algorithm for factoring an element of $\ker(\phi_r)$ supported on a small number of strands into the group $\Gamma_n^r$ of basic $(r+1)^{st}$ twists. Finally in \Cref{subsection:endgame}, we exploit the factorization algorithm to show the equality $\ker(\phi_r) = \Gamma_n^r$, first in \Cref{lemma:phionpb} on the level of the pure braid group, and finally in \Cref{theorem:gammakernel} in general. 

\subsection{$\phi_r$ as a count of virtual undercrossings}\label{subsection:virtual}

\begin{definition}[Virtual undercrossing map]
    \label{definition:undercrossing}
    Let $r \ge 1$ be given. The {\em virtual undercrossing map} is the homomorphism\footnote{That this is indeed a homomorphism is verified by a routine calculation.} $\Upsilon_r: B_n \to \GL_{n+1}(\Z/r\Z) \ltimes (\Z/r\Z)^{n+1}$ defined as follows. Number the components of $(\Z/r\Z)^{n+1}$ from $0$ to $n$, and, for $1 \le i \le n-1$, let $P_i \in \GL_{n+1}(\Z/r \Z)$ be the matrix obtained from $I_{n+1}$ by replacing the $i^{th}$ column with $e_{i-1} - e_i + e_{i+1}$. Then define
    \[
    \Upsilon_r(\sigma_i) = (P_i, e_{i+1}-e_i)
    \]
    for $1 \le i \le n-1$. For $\beta \in B_n$, write
    \[
    \Upsilon_r(\beta) = (M(\beta), v(\beta)). 
    \]
    As is common to all homomorphisms into semi-direct products, the second factor $v(\beta)$ defines a crossed homomorphism $v: B_n \to (\Z/r\Z)^{n+1}$ under the action of $B_n$ on $(\Z/r\Z)^{n+1}$ via $M$.

    Also note that $\Upsilon_r$ defines an action of $B_n$ on $(\Z/r\Z)^{n+1}$ via 
\begin{equation}\label{equation:beta}
\beta\cdot \vec x = M(\beta)\vec x + v(\beta).
\end{equation}
\end{definition}

\begin{lemma}
    \label{lemma:equivalent}
    Let $f: (\Z/r\Z)^n \to (\Z/r\Z)^{n+1}$ be given by 
    \[
    e_i \in (\Z/r\Z)^n \mapsto e_i - e_{i-1} \in (\Z/r\Z)^{n+1}.
    \]
    Then $f$ induces a map of $B_n$-modules, where $(\Z/r\Z)^n$ carries the standard permutation action of $S_n$ and $(\Z/r\Z)^{n+1}$ carries the action via $M$. Under the induced map on homology,
    \[
    f_*(\phi_r) = v.
    \]
    Moreover, $\ker(\phi_r) = \ker(v)$.
\end{lemma}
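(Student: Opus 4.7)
The lemma packages three separate claims: that $f$ is a $B_n$-module map, that $f \circ \phi_r = v$ as crossed homomorphisms into $(\Z/r\Z)^{n+1}$, and that the two have identical kernels in $B_n$. My plan is to dispatch them in this order, since each subsequent step uses the previous.

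\textbf{Step 1: $B_n$-equivariance of $f$.} Since $B_n$ is generated by the $\sigma_i$, it suffices to verify $f(\sigma_i \cdot e_j) = P_i \cdot f(e_j)$ for all $1 \le i \le n-1$ and $1 \le j \le n$. The left-hand side is computed from the transposition action: it equals $f(e_{i+1}) = e_{i+1} - e_i$ if $j = i$, equals $f(e_i) = e_i - e_{i-1}$ if $j = i+1$, and equals $f(e_j) = e_j - e_{j-1}$ otherwise. On the right-hand side, I use that $P_i$ fixes every basis vector except $e_i$, where $P_i e_i = e_{i-1} - e_i + e_{i+1}$. For $j = i$, one computes $P_i(e_i - e_{i-1}) = (e_{i-1} - e_i + e_{i+1}) - e_{i-1} = e_{i+1} - e_i$, matching. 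For $j = i+1$, $P_i(e_{i+1} - e_i) = e_{i+1} - (e_{i-1} - e_i + e_{i+1}) = e_i - e_{i-1}$, matching. For $j \notin \{i, i+1\}$, both $e_j$ and $e_{j-1}$ avoid the index $i$ (the dangerous case $j - 1 = i$ is already excluded), so $P_i$ fixes $f(e_j)$ and we are done. These are the only cases.

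\textbf{Step 2: Identifying the cocycles.} Having verified that $f$ intertwines the $B_n$-actions, the composition $f \circ \phi_r : B_n \to (\Z/r\Z)^{n+1}$ is a crossed homomorphism into the module $(\Z/r\Z)^{n+1}$ (with the $M$-action). A crossed homomorphism is determined by its values on a generating set via the cocycle identity $c(gh) = c(g) + g \cdot c(h)$, so it suffices to compare $f \circ \phi_r$ and $v$ on the standard generators. Using $\phi_r(\sigma_i) = e_{i+1}$, one immediately computes
\[
f(\phi_r(\sigma_i)) = f(e_{i+1}) = e_{i+1} - e_i = v(\sigma_i),
\]
which establishes $f \circ \phi_r = v$.

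\textbf{Step 3: Equality of kernels.} The inclusion $\ker(\phi_r) \subseteq \ker(v)$ is immediate from $v = f \circ \phi_r$. For the reverse, it suffices to show that $f: (\Z/r\Z)^n \to (\Z/r\Z)^{n+1}$ is injective. This is elementary: the matrix of $f$ (in the given bases) has $1$'s on the diagonal and $-1$'s on the subdiagonal, which is clearly injective by a descent argument on coordinates. Explicitly, if $\sum_{i=1}^n c_i (e_i - e_{i-1}) = 0$, reading off the coefficient of $e_n$ gives $c_n = 0$, then the coefficient of $e_{n-1}$ gives $c_{n-1} = 0$, and so on down to $c_1$. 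Hence $f$ is injective and $\ker(v) \subseteq \ker(\phi_r)$.

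The only nontrivial content is the equivariance check in Step 1, which relies on the specific definition of $P_i$ as a non-permutation matrix; steps 2 and 3 are essentially formalities. I do not expect any real obstacle, as the case analysis in Step 1 is short and mechanical.
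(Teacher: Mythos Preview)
Your proposal is correct and follows essentially the same approach as the paper's proof: verify equivariance on generators, check equality of the crossed homomorphisms on the $\sigma_i$, and deduce equality of kernels from injectivity of $f$. The only difference is that you spell out the routine calculations that the paper leaves implicit (the case analysis for equivariance and the descent argument for injectivity), but the logical structure is identical.
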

\begin{proof}
That $f$ is a map of $B_n$-modules under the indicated actions is a routine calculation. To see that $f_*(\phi_r) = v$, it suffices to verify this on the standard generators $\sigma_i$. To that end, we compute
\[
f_*(\phi_r)(\sigma_i) = f(\phi_r(\sigma_i)) = f(e_{i+1}) = e_{i+1} - e_{i} = v(r).
\]
As $v = f_*(\phi_r)$, there is a containment $\ker(\phi_r) \le \ker(v)$, and as $f$ is readily seen to be an injection, it follows that this containment is an equality. 
\end{proof}

\para{Virtual undercrossings} There is a graphical description of $\Upsilon_r$ which provides the key tool for expressing the kernel of these crossed homomorphisms in terms of $(r+1)^{st}$-twists. Let $\beta \in B_n$ be given. We imagine $\beta$ (depicted in black) as sitting ``on top of'' a trivial braid (in blue) with a very large number of strands, where the ends of the blue strands are not fully fixed but are allowed to ``slide'' horizontally. Given $\vec x \in (\Z/r\Z)^{n+1}$, we interpret the entries $x_0, \dots, x_n$ as a mod-$r$ count of the number of strands in the bottom (blue) layer positioned in between each pair of adjacent strands of $\beta$ at the top of the figure. To compute the action of $\beta$ on $\vec x$ via $\Upsilon_r$, we thread the strands in the bottom layer downwards, subject to the rule that strands in the bottom layer never cross, and that at each crossing of $\beta$, the total number of strands crossing under (counting both layers) is $0$ mod $r$. 

\begin{figure}[ht]
\labellist
\tiny
\pinlabel $\red{x_{i-1}}$ [b] at 19.84 96.20
\pinlabel $\red{x_{i}}$ [b] at 65.19 96.20
\pinlabel \red{$x_{i+1}$} [b] at 85.03 96.20
\pinlabel \red{$x_{i-1}+x_i$} [t] at 10 0.00
\pinlabel \red{$-x_i-1$} [t] at 55 0.00
\pinlabel \red{$x_i+x_{i+1}+1$} [t] at 105 0.00
\endlabellist
\includegraphics[scale=1]{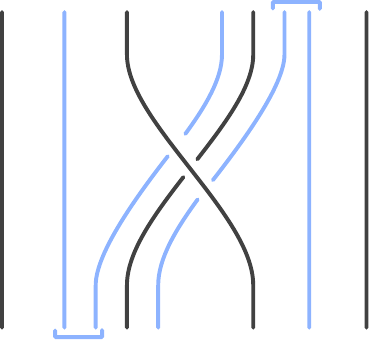}
\caption{The virtual undercrossing procedure for the braid $\sigma_i$. The blue strands are imagined as lying in a layer below the black strands of $\sigma_i$. Each blue strand is actually composed of a very large number of individual strands; the numbers of such strands mod $r$ are depicted above and below. By convention, all strands in the middle group (labeled $x_i$ at the top) must cross in the same direction as the undercrossing in $\beta$. We then split the right strand (labeled $x_{i+1}$ at the top) so that altogether, the number of strands crossing under (both blue and black) is $0$ mod $r$. As all $x_i$ blue strands in the middle group must cross under, and $\sigma_i$ itself contributes one, we must borrow $-x_i - 1 \pmod r$ from the right strand to satisfy this condition.} 
\label{figure:undercrossing}
\end{figure}

\Cref{figure:undercrossing} illustrates this procedure in the case of a single crossing $\sigma_i$, and shows that the effect on the vector $\vec x = (x_0, \dots, x_n)$ is exactly given by $\sigma_i \cdot \vec x$ as in \eqref{equation:beta}. To compute this action for a general braid, we simply repeat this process at each crossing, working from top to bottom. In particular, the value $v(\beta)$ is computed as the output of the virtual undercrossing procedure applying the zero vector at the top of the braid diagram for $\beta$. For future reference, we record the following characterization of $\ker(\phi_r)$. 

\begin{lemma}
    \label{lemma:kernel0}
    A braid $\beta \in B_n$ lies in $\ker(\phi_r)$ if and only if the virtual undercrossing action for $\beta$ satisfies $\beta \cdot \vec 0 = \vec 0$. If $\beta$ is moreover a pure braid, then $\beta \cdot \vec x = \vec x$ for $\vec x \in (\Z/r\Z)^{n+1}$ arbitrary.
\end{lemma}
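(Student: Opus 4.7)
The plan is to deduce both claims directly from the definition of the action in \eqref{equation:beta} together with the identification $\ker(\phi_r) = \ker(v)$ already supplied by \Cref{lemma:equivalent}. The key structural input I need is that the linear part $M \colon B_n \to \GL_{n+1}(\Z/r\Z)$ of $\Upsilon_r$ factors through the symmetric group $S_n$.

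For the first equivalence I simply unpack definitions: by \eqref{equation:beta}, $\beta \cdot \vec 0 = M(\beta) \vec 0 + v(\beta) = v(\beta)$, so the condition $\beta \cdot \vec 0 = \vec 0$ is exactly $v(\beta) = 0$. Combined with $\ker(v) = \ker(\phi_r)$ from \Cref{lemma:equivalent}, this completes the biconditional.

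For the second claim I first show that $M$ descends to a representation of $S_n$, which forces $M(\beta) = I$ for every pure braid $\beta$. Since $M$ is already a homomorphism from $B_n$, it suffices to check that each generator $P_i = M(\sigma_i)$ squares to the identity. Using $P_i e_i = e_{i-1} - e_i + e_{i+1}$ and $P_i e_j = e_j$ for $j \neq i$, a direct calculation gives
\[
P_i^2 e_i = P_i(e_{i-1} - e_i + e_{i+1}) = e_{i-1} - (e_{i-1} - e_i + e_{i+1}) + e_{i+1} = e_i,
\]
while $P_i^2 e_j = e_j$ for $j \neq i$ is immediate. Thus $P_i^2 = I$, so $M$ factors through $B_n \twoheadrightarrow S_n$ and $M(\beta) = I$ whenever $\beta$ is pure. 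If we further assume $\beta \in \ker(\phi_r)$, then $v(\beta) = 0$ by the first half, and \eqref{equation:beta} yields $\beta \cdot \vec x = M(\beta)\vec x + v(\beta) = \vec x$ for arbitrary $\vec x$.

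There is essentially no real obstacle here: the content is an unpacking of definitions, with the one genuine input being the small computation that $P_i^2 = I$. Conceptually this matches the virtual undercrossing picture, in which $M$ records how the ``slots'' between adjacent strands get permuted by the underlying permutation of the braid; since a pure braid returns each strand to its starting position, this permutation is trivial and all nontrivial effect on $\vec x$ is absorbed into the crossed homomorphism $v$.
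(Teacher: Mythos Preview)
Your proof is correct and follows essentially the same approach as the paper: both use $\beta\cdot\vec 0 = v(\beta)$ together with \Cref{lemma:equivalent} for the first claim, and the fact that $M(\beta)=I$ for pure braids for the second. The only difference is that you explicitly justify $M(\beta)=I$ on $PB_n$ by computing $P_i^2=I$, whereas the paper simply asserts this; this is a welcome bit of added detail rather than a different route.
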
 
\begin{proof}
    As noted above, applying the virtual undercrossing procedure on a braid $\beta$ to $\vec 0$ yields $v(\beta)$. By \Cref{lemma:equivalent}, $v(\beta) = \vec 0$ if and only if $\phi_r(\beta) = \vec 0$. If $\beta$ is any pure braid, then $M(\beta) = I_{n+1}$, and so $\beta \cdot \vec x = M(\beta) \vec x + v(\beta) = \vec x + v(\beta)$. Thus if $\beta \in \ker(\phi_r)$ is pure, $\beta \cdot \vec x = \vec x$ as claimed. 
\end{proof}

\subsection{The factorization algorithm}\label{subsection:factorize}

As discussed in the section outline above, the factorization algorithm in this section gives a method for expressing an element of $\ker(\phi_r)$ in $\Gamma_n^r$ when it is supported on a small number of strands. Our algorithm will require that the supporting subdisk have a particularly simple form which we call a {\em standard embedding}; we begin with this definition.

\begin{definition}[Standard arc]\label{definition:standardarc}
    Let $D_n$ denote the disk with $n$ marked points. An embedded arc $\alpha \subset D_n$ with endpoints at distinct marked points is {\em standard} if it is contained entirely in the lower half-disk. For each pair of marked points $i,j$, there is a unique isotopy class of standard arc connecting $i$ and $j$, which is denoted $\alpha_{ij}$.
\end{definition}

\begin{definition}[Standard embedding]
\label{definition:standardembed}
Let $D_n$ denote the disk with $n$ marked points. An embedding $i: D_s \to D_n$ sending marked points to marked points is {\em standard} if it can be represented as a regular neighborhood of a union of standard arcs which are disjoint except at endpoints. 
\end{definition}

An example of a standard embedding is depicted in \Cref{figure:embed}.
\begin{figure}[ht]
\labellist
\tiny
\endlabellist
\includegraphics[scale=0.6]{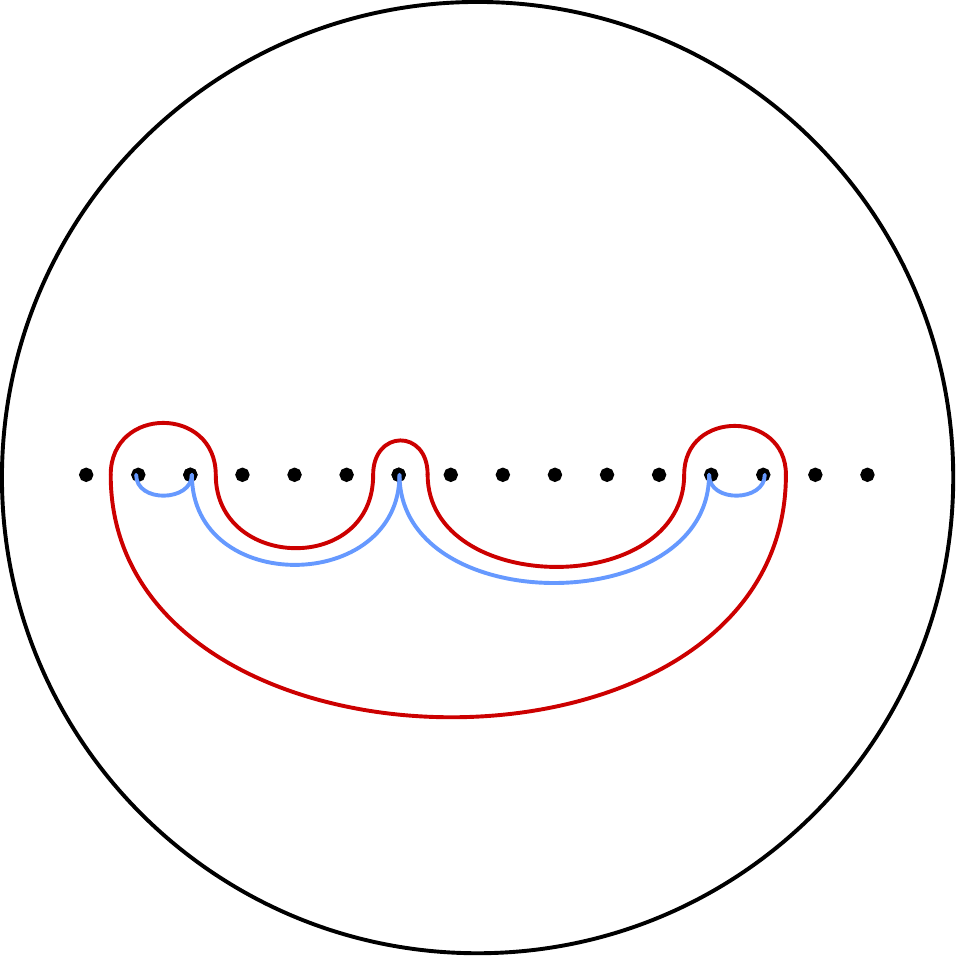}
\caption{The red disk is a standard embedding of $D_5$, as it is a regular neighborhood of the four standard arcs shown in blue.} 
\label{figure:embed}
\end{figure}

\begin{lemma}[Factorization algorithm]
    \label{prop:factorize}
    Let $n \ge rs$. Let $i: D_s \to D_n$ be a standard embedding, and let $i_*: B_s \to B_n$ denote the corresponding inclusion of braid groups. Then there is a containment $i_*(PB_s) \cap \ker(\phi_r) \le \Gamma_n^r$.
\end{lemma}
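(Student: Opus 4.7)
The plan is to exploit the virtual undercrossing reformulation from \Cref{subsection:virtual}. Since $\beta \in i_*(PB_s)$ is pure and lies in $\ker(\phi_r)$, \Cref{lemma:kernel0} gives the strong conclusion that $\beta \cdot \vec x = \vec x$ for every $\vec x \in (\Z/r\Z)^{n+1}$, not merely for $\vec x = \vec 0$. My strategy is to use this fact to construct an explicit braid diagram for $\beta$ on $n$ strands in which every overcrossing passes over a multiple of $r$ strands, at which point the characterization given in \Cref{remark:undercrossing} identifies $\beta$ as an element of $\Gamma_n^r$.

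First I would set up the diagram. Since $i: D_s \to D_n$ is standard, $\beta$ admits a diagram on $n$ strands in which the $s$ ``active'' strands (those in the image of $i$) braid according to $\beta$ while the remaining $n-s$ ``passive'' strands are vertical; standardness moreover ensures the active strands interact only by crossings that can be localized. The inequality $n \ge rs$ gives $n - s \ge (r-1)s$ passive strands to work with, which is precisely the number needed to place roughly $r-1$ passive strands in each of the gaps between consecutive active strands (with some flexibility for the exact distribution). I would first verify that one can assume, after harmless modification, that the passive strands are distributed so that each gap contains exactly enough strands to supply the required undercrossings.

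The main technical step is a \emph{threading algorithm}: process the diagram from top to bottom, one crossing of $\beta$ at a time. Maintain a state vector $\vec x \in \Z^{n+1}$ counting how many passive strands currently lie in each gap between active strands. At each crossing $\sigma_a^{\pm 1}$ of $\beta$, the virtual undercrossing procedure prescribes a net flow of passive strands from one gap to the neighboring gaps so that the total under-count at the crossing (the passive strands crossing under together with the one active strand) is a multiple of $r$. Perform this flow literally, at the cost of introducing the crossings of passive strands under the active strand. Because $\beta$ is pure and $\beta \cdot \vec x = \vec x$, the state vector returns to its original value after all crossings of $\beta$ have been processed, so every passive strand can be brought back to its initial position. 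The bound $n \ge rs$ ensures that at no intermediate stage does the algorithm demand more passive strands from a gap than are present.

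The main obstacle is the careful lift from the mod-$r$ virtual undercrossing calculus to an actual integral threading. The virtual undercrossing formalism only tracks residues mod $r$, whereas the diagram requires tracking honest counts of strands; I need to show that the intermediate state vectors can be chosen to remain in $\Z_{\ge 0}$ throughout, which is where the distribution provided by $n \ge rs$ is essential. I also need to verify that the resulting diagram truly represents $\beta$ rather than $\beta$ composed with some nontrivial braid on the passive strands; the key here is that the passive strands can be isotoped to be ``as vertical as possible'' between crossings, so any passive–passive crossings that appear are either cancellable or themselves already have the form of basic $(r+1)^{st}$ twists. Once the algorithm terminates with a diagram in which every overcrossing passes over a multiple of $r$ strands, \Cref{remark:undercrossing} yields $\beta \in \Gamma_n^r$ directly.
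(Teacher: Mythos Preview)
Your proposal is essentially the paper's own argument: thread the $n-s$ ``blue'' strands through the crossings of $\beta$ according to the virtual undercrossing rule, use \Cref{lemma:kernel0} (purity plus $\ker(\phi_r)$) to see the counts return to their initial values, and invoke \Cref{remark:undercrossing}. The paper resolves your ``mod-$r$ to integral'' obstacle by the convention of passing packets of exactly $r$ blue strands back to the right whenever a gap accumulates $\ge r$ of them (so the final integral counts, not just their residues, match the initial ones), and your passive--passive crossing worry does not arise because the virtual undercrossing rule never crosses blue strands over one another.
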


\begin{figure}[ht]
\labellist
\tiny
\pinlabel $\red{0}$ [b] at 0.00 286.27
\pinlabel $\red{0}$ [b] at 28.34 286.27
\pinlabel $\red{0}$ [b] at 73.69 286.27
\pinlabel $\red{0}$ [b] at 104.87 225
\pinlabel $\red{1}$ [b] at 138.88 225
\pinlabel $\red{1}$ [b] at 174.90 225
\pinlabel $\red{1}$ [b] at 209.74 156
\pinlabel $\red{0}$ [b] at 240.92 156
\pinlabel $\red{1}$ [b] at 279.77 156
\pinlabel $\red{1}$ [b] at 317.45 90
\pinlabel $\red{1}$ [b] at 348.63 90
\pinlabel $\red{0}$ [b] at 374.14 90
\pinlabel $\red{0}$ [b] at 425.16 51
\pinlabel $\red{0}$ [t] at 450.67 50
\pinlabel $\red{0}$ [b] at 479.01 51
\endlabellist
\includegraphics[scale=0.9]{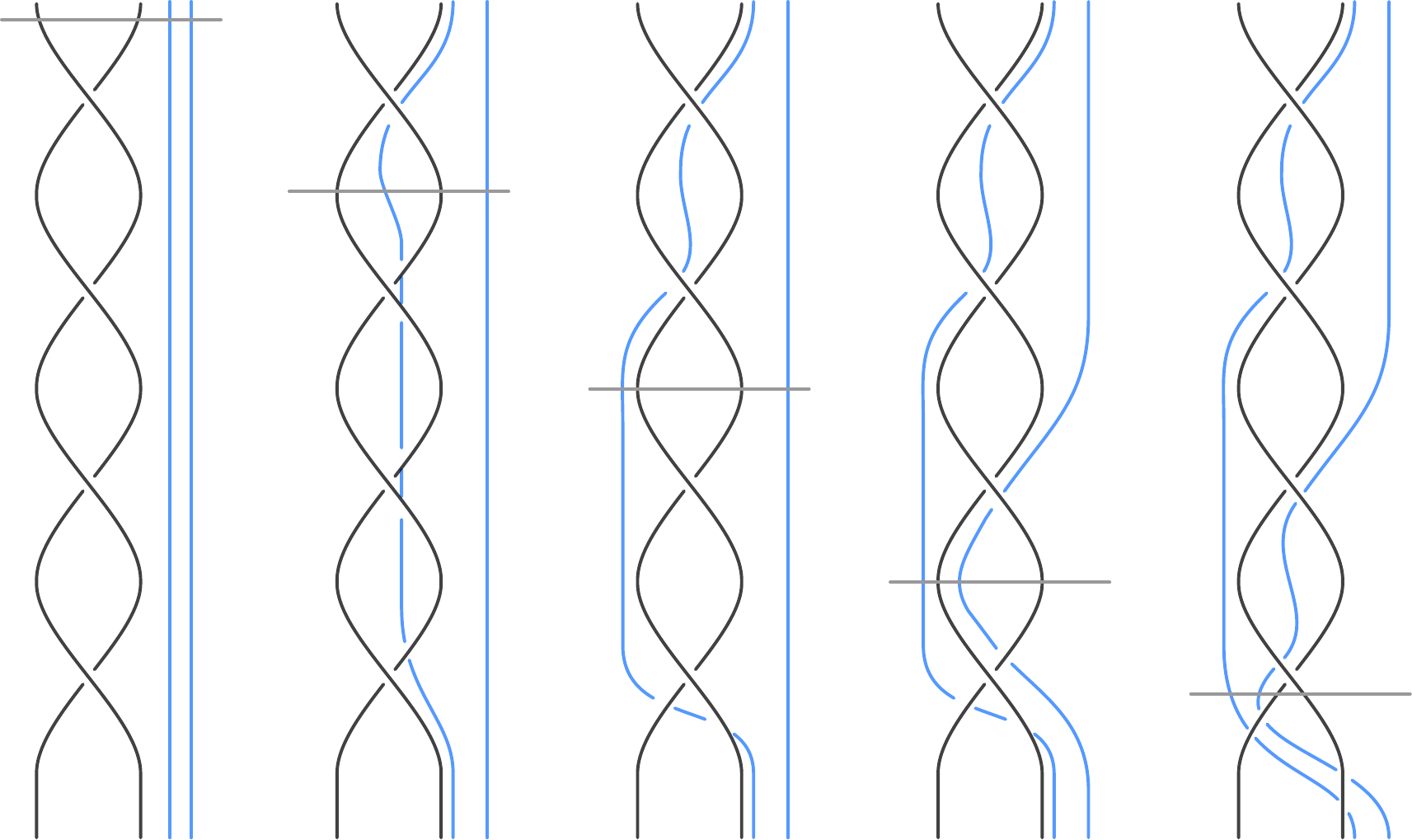}
\caption{The factorization algorithm applied in the case $r = 2$ to $\sigma_1^4$. Reading left to right, we work our way down through the crossings, borrowing blue strands so that successive crossings have a total of $r =2$ strands crossing under. Since $\phi_2(\sigma_1^4) = \vec 0$, after performing this procedure at all four crossings, the number of blue strands in each position is $0 \pmod 2$, and they can be passed back under in pairs, preserving the property that every overcrossing has an even number of strands passing underneath. Thus the algorithm produces the factorization $\sigma_1^4 = (\sigma_1 \sigma_2)^2(\sigma_2 \sigma_3)^2(\sigma_1 \sigma_2)^{-1} (\sigma_2 \sigma_3)^{-1}$. The red numbers indicate the counts of blue strands in the indicated positions at the indicated levels; notice that they record the values of $v(\sigma_1^k)$ for $k = 0,\dots,4$.} 
\label{figure:example}
\end{figure}

\begin{proof}
 We begin with an important special case, when $D_s$ is the standard disk consisting of the first $s$ points. This will serve to illustrate all of the key ideas of the argument. Then we will discuss the modifications necessary to apply in the general case. 

\para{Special case: first $s$ strands} While reading this portion of the argument, the reader is invited to consult the worked example demonstrated in \Cref{figure:example}. Let $\beta \in PB_s \cap \ker(\phi_r)$ be given. We view this as a braid $\alpha$ on $s$ strands juxtaposed with a trivial braid $\tau$ on $n-s \ge (r-1)s$ strands lying to the right. The key idea is to treat the strands of $\tau$ as the virtual strands in the virtual undercrossing procedure. Accordingly, we will depict the strands of $\alpha$ as black, and those of $\tau$ as blue, as in our discussion of virtual undercrossings above.

Recall (\Cref{remark:undercrossing}) that a basic $(r+1)^{st}$ twist $\Sigma_{i;r}$ consists of a single strand passing over $r$ strands, so that in order to exhibit $\beta$ as an element of $\Gamma_n^r$, it suffices to factor $\beta$ so that all crossings have this form. To perform the factorization, we will isotope the strands of $\tau$, moving them to the left so that each overcrossing in $\alpha$ has $0 \pmod r$ total strands (black and blue) passing underneath. 

In carrying this factorization out, we will make use of the following operation. Given a braid $\beta$, obviously the product $\beta \Sigma_{i;r}$ lies in the same left coset of $\Gamma_n^r$ as $\beta$. Graphically, $\beta \Sigma_{i;r}$ is obtained from $\beta$ by taking the packet of $r$ consecutive strands from $i+1$ to $i+r$ and passing them one unit to the left under the $i^{th}$ strand, the latter of which moves over $r$ units to the right. We call this procedure {\em passing a packet} to the left; evidently there is also the analogous move of passing a packet to the right, corresponding to right-multiplication by $\Sigma_{i;r}^{-1}$. Likewise, we do not change the left coset by passing packets of $r$ strands at the top of the braid.

Before presenting the algorithm, we make one final observation. Suppose we are given a particular braid diagram for $\beta$ (not just its isotopy class). As usual, we think of the strands of $\alpha$ as black and the strands of $\tau$ as blue. At any vertical level where no two strands of $\beta$ (black or blue) cross, we have a well-defined count of the number of blue strands in between each adjacent pair of black strands, giving us an integer vector $\vec v$ with $s+1$ entries. We call the space between strands $i$ and $i+1$ of $\alpha$ as {\em the $i^{th}$ position}. If the blue strands of $\tau$ are isotoped so as to conform to the conventions of the virtual undercrossings procedure (as described in \Cref{figure:undercrossing}), the reduction of $\vec v \pmod r$ is equal to $v(\alpha')$, where $\alpha'$ is the portion of $\alpha$ from the top down to the specified vertical level.\\

We now explain the factorization algorithm. Express $\alpha$ as a product of the standard generators of $B_s$, and suppose $\alpha$ begins with $\sigma_i^{\epsilon}$ with $\epsilon = \pm 1$. To begin the factorization, pass a packet of the first $r$ strands of $\tau$ to the left; in the case of $\epsilon = 1$, pass these to the $(i+1)^{st}$ position, and if $\epsilon = -1$, pass these to the $(i-1)^{st}$. Pass $r-1$ of these under the overcrossing and the remaining strand straight down, exactly as illustrated in \Cref{figure:undercrossing}. We call this process {\em resolving a crossing}.

Now repeat this procedure for the remaining crossings of $\alpha$: pass packets of $r$ blue strands from $\tau$ to the relevant position, and borrow the necessary number of strands so as to create an undercrossing by $r$ strands. If ever there are $r$ or more blue strands in a single position after resolving a crossing, pass them in multiples of $r$ all the way back to the right. 

We must verify that as long as there are at least $(r-1)s$ blue strands, it is always possible to pass a packet of $r$ strands from the right over to the location of the overcrossing so as to facilitate a borrowing. Borrowing from the right is necessary only when the number of blue strands in consecutive positions is strictly less than the $r-1$ needed to ensure that $r$ strands pass under the given overcrossing, i.e. there are consecutive strand counts $x_i, x_{i+1}$ for which $x_i + x_{i+1} \le r -2$. Since we have passed packets of $r$ strands to the right (i.e. to $x_n$) whenever possible, each of the remaining $s-2$ components $x_j$ for $0 \le j \le n-1$ has $x_j \le r-1$. Altogether then, in this situation, we have
\[
\sum_{i = 0}^{n-1}x_i \le (s-2)(r-1) + r-2 = (r-1)s - r,
\]
so that $x_n \ge r$ as was to be shown.

By \Cref{lemma:kernel0}, at the conclusion of this process, the number of blue strands mod $r$ in each position is equal to the corresponding component of $v(\alpha) = \vec 0$. As we have methodically passed packets of $r$ blue strands to the right whenever possible, this shows that in fact there are no blue strands in between the black strands of $\alpha$. In other words, the resuling braid diagram is isotopic to the original juxtaposition of $\alpha$ and $\tau$. On the other hand, we have isotoped the blue strands of $\tau$ so that at every overcrossing, there are $0 \pmod r$ strands passing underneath, exhibiting $\beta$ as a product of $(r+1)^{st}$ twists as required.

\begin{figure}[ht]
\labellist
\tiny
\pinlabel $\red{1}$ [b] at -5.00 286.27
\pinlabel $\red{1}$ [b] at 28.34 286.27
\pinlabel $\red{0}$ [b] at 60.69 286.27
\pinlabel $\red{0}$ [b] at 98.87 240
\pinlabel $\red{0}$ [b] at 138.88 240
\pinlabel $\red{0}$ [b] at 164.90 240
\pinlabel $\red{0}$ [b] at 209.74 146
\pinlabel $\red{1}$ [b] at 233.92 146
\pinlabel $\red{1}$ [b] at 279.77 146
\pinlabel $\red{1}$ [b] at 323.45 70
\pinlabel $\red{0}$ [b] at 348.63 70
\pinlabel $\red{1}$ [b] at 369.14 70
\pinlabel $\red{1}$ [b] at 427.16 10
\pinlabel $\red{1}$ [b] at 450.67 10
\pinlabel $\red{0}$ [b] at 479.01 10
\endlabellist
\includegraphics[scale=0.9]{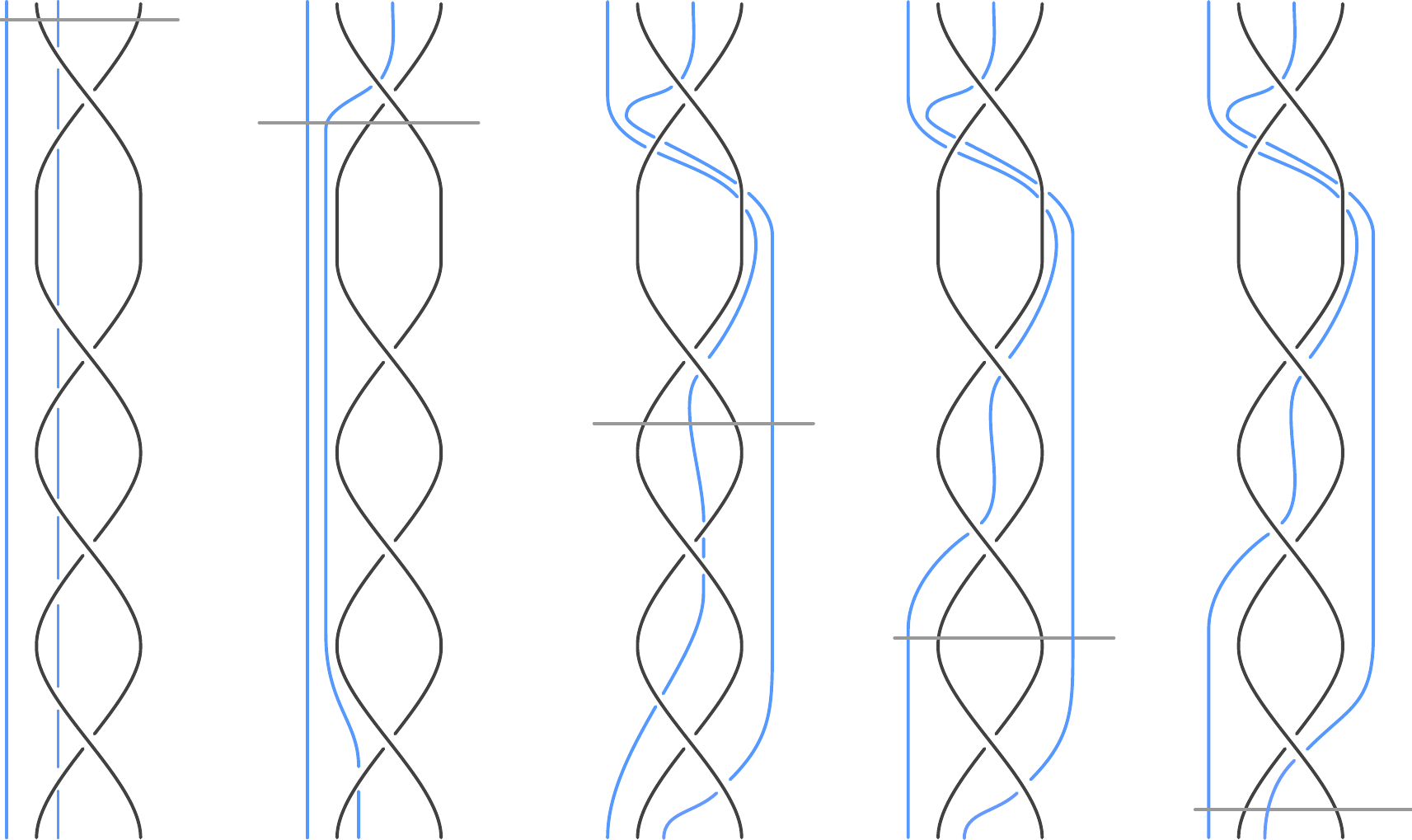}
\caption{The factorization algorithm as applied to a braid under a standard embedding ($r = 2$). As before, the method is to work down from the top, resolving crossings so as to have $r$ strands crossing under each overcrossing. Note that after each crossing is resolved, the count of blue strands in each position is given by $\alpha' \cdot \vec x$, where $\vec x = (1,1,0)$ is the count of blue strands at the top of the diagram, $\alpha'$ is the initial segment of $\alpha$ to the given level, and the action is given by \eqref{equation:beta}.} 
\label{figure:example2}
\end{figure}

\para{General case: arbitrary standard embedding} The reader is now invited to consult \Cref{figure:example2}. Let $i: D_s \to D_n$ be a standard embedding, and let $\beta \in i_*(PB_n) \cap \ker(\phi_r)$ be given. As $i$ is standard, we can represent $\beta$ as a juxtaposition of a braid $i_*(\alpha)$ for $\alpha \in PB_s$ in black {\em on top of} a trivial braid of $n-s$ strands in blue. In the language established above, the only difference between this setting and the special case above is that here we begin the factorization algorithm with blue strands in arbitrary positions, not with all $n-s$ strands in position $s$ as above. We proceed as before, working our way down from the top, resolving crossings by borrowing blue strands. The analysis above applies verbatim to show that when $n \ge rs$, there are sufficiently many blue strands available to make borrowing possible. It remains to be shown that the blue strands return to their original positions after all of the crossings of $\alpha$ are resolved. Recalling the hypothesis that $\beta$ be a {\em pure} braid, this now follows from \Cref{lemma:kernel0}. 
\end{proof}

\subsection{Factoring general braids}\label{subsection:endgame}
In this section, we conclude the proof of \Cref{mainthm:monodromy}. The main technical result is \Cref{lemma:factorpure}, which establishes the containment $\ker(\phi_r) \cap PB_n \le \Gamma_n^r$. From there, the full containment (\Cref{theorem:gammakernel}) and the proof of \Cref{mainthm:monodromy} are relatively easy. 

To begin the analysis of $\ker(\phi_r) \cap PB_n$, we investigate the restriction of $\phi_r$ to $PB_n$.
\begin{lemma}
    \label{lemma:phionpb}
    The restriction of $\phi_r$ to $PB_n$ is a genuine homomorphism $\phi_r: PB_n \to (\Z/r\Z)^n$, given on the standard generators $A_{ij}$ of $PB_n$ via
    \[
    \phi_r(A_{ij}) = e_i + e_j.
    \]
\end{lemma}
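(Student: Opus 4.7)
The plan is to split the statement into two pieces: (1) verify that $\phi_r$ restricts to a genuine homomorphism on $PB_n$, and (2) evaluate it on the standard generators.

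For (1), note that by construction the action of $B_n$ on $(\Z/r\Z)^n$ factors through the symmetric quotient $B_n \to S_n$. Any pure braid projects to the identity of $S_n$ and hence acts trivially on $(\Z/r\Z)^n$. Specializing the crossed homomorphism identity $\phi_r(\alpha\beta) = \phi_r(\alpha) + \alpha\cdot\phi_r(\beta)$ to $\alpha,\beta\in PB_n$ therefore yields $\phi_r(\alpha\beta) = \phi_r(\alpha) + \phi_r(\beta)$, so $\phi_r|_{PB_n}$ is an honest homomorphism into $(\Z/r\Z)^n$.

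For (2), I would use the standard expression
\[
A_{ij} = \alpha\, \sigma_i^2\, \alpha^{-1}, \qquad \alpha := \sigma_{j-1}\sigma_{j-2}\cdots\sigma_{i+1},
\]
valid for $i<j$ (with $\alpha$ trivial when $j=i+1$), together with the general crossed-homomorphism identities
\[
\phi_r(\sigma^{-1}) = -\sigma^{-1}\phi_r(\sigma), \qquad \phi_r(ghg^{-1}) = \phi_r(g) + g\phi_r(h) - ghg^{-1}\phi_r(g).
\]
Since $A_{ij}\in PB_n$ acts trivially on the target, the last identity collapses to $\phi_r(A_{ij}) = \alpha\cdot\phi_r(\sigma_i^2)$. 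A direct application of the crossed homomorphism formula to $\sigma_i\cdot\sigma_i$ gives
\[
\phi_r(\sigma_i^2) = \phi_r(\sigma_i) + \sigma_i\cdot\phi_r(\sigma_i) = e_{i+1} + \sigma_i\cdot e_{i+1} = e_{i+1} + e_i,
\]
using that $\sigma_i$ acts on $(\Z/r\Z)^n$ by swapping the $i$-th and $(i{+}1)$-st coordinates.

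It remains to compute $\alpha\cdot(e_i+e_{i+1})$. Tracking the permutation action, $\alpha$ fixes the index $i$ (none of the factors $\sigma_{i+1},\dots,\sigma_{j-1}$ involve position $i$), while it moves position $i{+}1$ successively to $i{+}2,\, i{+}3,\,\dots,\, j$. Hence $\alpha\cdot e_i = e_i$ and $\alpha\cdot e_{i+1} = e_j$, giving $\phi_r(A_{ij}) = e_i + e_j$ as claimed. There is no real obstacle here; the only thing to be careful about is keeping the sign/order conventions in the crossed homomorphism identity consistent with the left action \eqref{equation:beta}.
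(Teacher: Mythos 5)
Your proposal is correct and follows essentially the same route as the paper: both observe that pure braids act trivially on the target so the crossed-homomorphism identity becomes additive, both compute $\phi_r(\sigma_i^2)=e_i+e_{i+1}$, and both pass to general $A_{ij}$ via the conjugation identity $\phi_r(ghg^{-1})=g\cdot\phi_r(h)$ for pure $h$. You simply carry out the conjugation more explicitly (deriving the collapse of the conjugation formula from the fact that $ghg^{-1}$ acts trivially, and tracking the permutation image of $\alpha=\sigma_{j-1}\cdots\sigma_{i+1}$), where the paper asserts these steps are "readily seen."
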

\begin{proof}
    Since the action of $B_n$ on $(\Z/r\Z)^n$ factors through the quotient $B_n \to S_n$, it follows that the restriction of $\phi_r$ to $PB_n$ is a homomorphism. We evaluate
    \[
    \phi_r(A_{12}) = \phi_r(\sigma_1^2) = \phi_r(\sigma_1) + \sigma_1 \cdot \phi_r(\sigma_1) = (1 + (12))\cdot e_2 = e_1 + e_2.
    \]
    The formula
    \[
    \phi_r(ghg^{-1}) = g \cdot \phi_r(h)
    \]
    for $g \in B_n$ and $h \in PB_n$ is readily seen to hold, from which the expression $\phi_r(A_{ij}) = e_i + e_j$ follows.
\end{proof}

\begin{lemma}\label{lemma:factorpure}
    Let $n \ge 3$ and $r \ge 2$ be given, and let $d \in \{0,1,2\}$ be the remainder of $n/3$. Then for $n \ge n_0(r,d)$ (where $n_0(r,d)$ is defined as in \eqref{equation:nord}), there is a containment $PB_n \cap \ker(\phi_r) \le \Gamma_n^r$.
\end{lemma}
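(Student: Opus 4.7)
The plan is to build on the factorization algorithm (\Cref{prop:factorize}) by exhibiting a generating set for $PB_n \cap \ker(\phi_r)$ whose elements are each supported on standard sub-disks of small size, so that the algorithm applies to every generator individually. By \Cref{lemma:phionpb}, $\phi_r$ restricts to a genuine homomorphism on $PB_n$ sending $A_{ij}$ to $e_i+e_j$, so $\ker(\phi_r)\cap PB_n$ is the preimage of $\ker(\phi_r^{\mathrm{ab}}) \le \Z^{\binom{n}{2}}$ and, in particular, contains the commutator subgroup $[PB_n, PB_n]$.

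First I would produce a small-support generating set for $\ker(\phi_r^{\mathrm{ab}})$. Natural candidates are the two-strand elements $r\,e_{\{i,j\}}$ lifting to $A_{ij}^r$; the four-strand ``exchange'' elements $e_{\{i,j\}}+e_{\{k,l\}}-e_{\{i,k\}}-e_{\{j,l\}}$ lifting to $A_{ij}A_{kl}A_{ik}^{-1}A_{jl}^{-1}$; and, when $r$ is even, the three-strand elements $\tfrac{r}{2}(e_{\{i,j\}}+e_{\{i,k\}}+e_{\{j,k\}})$ lifting to $(A_{ij}A_{ik}A_{jk})^{r/2}$. Together with the basic commutators $[A_{ij},A_{kl}]$ generating $[PB_n, PB_n]$, which are also supported on at most four strands, these form a candidate generating set for $PB_n \cap \ker(\phi_r)$ in which every generator has support on at most four marked points. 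Any such four-element subset of $D_n$ is contained in the image of a standard embedding $i: D_s \to D_n$ with $s \le 4$, formed as a regular neighborhood of a chain of consecutive standard arcs through the chosen points.

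Given an arbitrary $\beta \in PB_n \cap \ker(\phi_r)$, factor it into these generators. For each generator $g$, identify an ambient standard embedding so that $g \in i_*(PB_s)\cap\ker(\phi_r)$, and apply \Cref{prop:factorize} (valid as soon as $n \ge rs$) to obtain $g \in \Gamma_n^r$. Since $\Gamma_n^r$ is closed under multiplication, the product $\beta$ lies in $\Gamma_n^r$ as well.

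The main obstacle will be carrying out the decomposition with a support bound sharp enough to match the specific threshold $n_0(r,d)$, which exceeds the naive bound of $4r$. The appearance of $d = n \bmod 3$ suggests that the author partitions the strands into consecutive blocks of three, systematically using the three-strand ``triangle'' elements within each block and reserving auxiliary room between blocks to apply the exchange and commutator decompositions in standard position. Additional care is required for $[PB_n, PB_n]$: expressing a general element of $\ker(\phi_r^{\mathrm{ab}})$ as a product of the listed generators reduces $\beta$ modulo commutators, and the remaining commutator part must then be written as a product of basic commutators whose supports remain small after any necessary conjugation by pure braids.
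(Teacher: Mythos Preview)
Your proposal has a genuine gap at precisely the point you flag as ``the main obstacle.'' The claim that the basic commutators $[A_{ij}, A_{kl}]$ generate $[PB_n, PB_n]$ is false: the commutator subgroup is only the \emph{normal closure} of commutators of generators, not the subgroup they generate (e.g.\ $[PB_3, PB_3]$ is free of infinite rank and cannot be generated by the three available commutators). Expressing an arbitrary element of $[PB_n, PB_n]$ therefore requires \emph{conjugates} of basic commutators, and conjugation destroys both the small-support property and, more fatally, the \emph{standard}-support property that \Cref{prop:factorize} requires: $g A_{ij} g^{-1}$ is a twist about a curve that need not bound any standardly embedded subdisk. Your closing paragraph acknowledges this difficulty but offers no mechanism to resolve it; without one, the argument does not go through.

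The paper's proof sidesteps this entirely by never seeking a global generating set. Instead it processes $w = \prod A_{i_k j_k}^{\epsilon_k}$ sequentially from the left, maintaining a factorization $w = \gamma \cdot (\text{buffer}) \cdot w'$ with $\gamma \in \Gamma_n^r$ and the buffer a product of \emph{commuting} elements of a prescribed shape (pairwise un-nested $A_{ab}$'s when $r=2$; elements $A_{1+3k;a,b,c}$ on consecutive triples when $r$ is odd; a hybrid with parity-linking for $r\ge 4$ even) recording the current value of $\phi_r$. Absorbing the next $A_{i_p j_p}^{\pm 1}$ touches at most two buffer elements; pulling those to the front gives a short word in standard generators $A_{ij}$ on a single index set of size at most $6+d$ (resp.\ $12+d$), hence automatically supported on a standard disk, and \Cref{prop:factorize} applies. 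This block-of-three bookkeeping is exactly what produces the thresholds $n_0(r,d)$. The essential difference from your plan is that every invocation of the factorization algorithm is on an explicit word in unmodified generators $A_{ij}$, so standardness is free; a global generating-set approach cannot arrange this.
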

\begin{proof}
    Let 
    \begin{equation}\label{equation:w}
    w = \prod_{k = 1}^m A_{i_k j_k}^{\epsilon_k} \in PB_n \cap \ker(\phi_r)
    \end{equation}
    be given. By hypothesis,
    \[
    \phi_r(w) = \sum_{k = 1}^m \epsilon_k (e_{i_k}+e_{j_k}) = \vec 0 \in (\Z/r\Z)^n.
    \]

    To express $w \in \Gamma_n^r$, we will exploit the factorization algorithm (\Cref{prop:factorize}) to rewrite the initial segment of $w$ as a product of commuting elements of small support which has the same value under $\phi_r$, removing initial segments that lie in $\ker(\phi_r)$ whenever possible. This will take slightly different forms in the regimes $r = 2$, $r$ odd, and $r \ge 4$ even; we begin with the case $r = 2$ since it is the simplest and will serve to illustrate the essential idea.

    \para{The case $\boldmath{r = 2}$} Assume $r = 2$, and write
    \[
    w = A_{i_1 j_1}^{\epsilon_1} A_{i_2 j_2}^{\epsilon_2} w'
    \]
    with $w'$ given as the product defining $w$ in \eqref{equation:w} above but starting at $k = 3$.
    
    There are three possibilities for the size of the set $\{i_1,j_1, i_2, j_2\}$. Suppose first that $\{i_1, j_1\} = \{i_2, j_2\} = \{i,j\}$, in which case necessarily $\epsilon_1 = \epsilon_2$. Suppose for simplicity $\epsilon_1 = \epsilon_2 = 1$; the argument in the other case is analogous. Here, $A_{ij}^2 \in \ker(\phi_2)$. We note that $A_{ij}^2$ is a pure braid in $\ker(\phi_2)$ under a standard embedding of a disk with two marked points, and as $n \ge 8$ by hypothesis, we can apply the factorization algorithm \Cref{prop:factorize} to express $A_{ij}^2 \in \Gamma_n^2$. Thus in this case, we can write $w = \gamma w'$ with $\gamma \in \Gamma_n^2$, proceeding in turn to factorize $w'$.

    Suppose next that $i_1 = i_2$ but $j_1 \ne j_2$. In this case, we have that 
    \[
    \phi_2(A_{i_1j_1}^{\pm 1} A_{i_2 j_2}^{\pm 1}) = e_{j_1} + e_{j_2} = \phi_2(A_{j_1 j_2}),
    \]
    so that
    \[
    A_{i_1j_1}^{\pm 1} A_{i_2 j_2}^{\pm 1} A_{j_1 j_2}^{-1} \in \ker(\phi_2).
    \]

    This element is again in the image of a standard embedding, so that we can apply the factorization algorithm (\Cref{prop:factorize}) to express $A_{i_1j_1}^{\pm 1} A_{i_2 j_2}^{\pm 1} A_{j_1 j_2}^{-1}$ as an element of $\Gamma_n^2$; the disk is standard and has four marked points, and hence this is possible for $n \ge 8$. Thus, by left-multiplying by an element of $\Gamma_n^2$ we can replace the initial segment $A_{i_1j_1}^{\pm 1} A_{i_2 j_2}^{\pm 1}$ with the initial segment $A_{j_1 j_2}$, which has smaller support. Similar arguments apply to the various cases when $\abs{\{i_1,j_1,i_2,j_2\}} = 3$.

    The remaining possibility is that $\abs{\{i_1,j_1,i_2,j_2\}} = 4$. In this case, we replace the initial segment $A_{i_1j_1}^{\pm 1} A_{i_2 j_2}^{\pm 1}$ with the segment $A_{a,b} A_{c,d}$, where $\{i_1,j_1, i_2, j_2\} = \{a,b,c,d\}$ and $a < b < c < d$. Note in particular that the elements $A_{a,b}, A_{c,d}$ of the initial segment commute, and that the pair of elements $A_{a,d} A_{b,c}$ also commute; we say that the former pair is {\em un-nested} and the latter {\em nested}.

    We continue in this way, expressing
    \[
    w = \gamma A_{i_p, j_p}^{\pm 1} \prod_{k = p+1}^m A_{i_k j_k}^{\epsilon_k}
    \]
    with $\gamma$ a product of pairwise un-nested commuting generators of $PB_n$. The support of the element $A_{i_p, j_p}^{\pm 1}$ intersects the support of $0, 1,$ or $2$ of the elements of $\gamma$. If it intersects zero, it may be nested with up to one. This can be resolved by pulling these two commuting elements to the front of $\gamma$ and replacing them as above with their un-nested counterpart. If it intersects one, these two elements can likewise be moved to the front of $\gamma$ and resolved into one or two basic elements as above. Finally suppose it intersects the support of two, say $A_{i_1,j_1}$ and $A_{i_2, j_2}$; by the non-nestedness hypothesis, $i_1 < j_1 < i_2 < j_2$. For $A_{i_p, j_p}^{\pm 1}$ to intersect both, we must have
    \[
    i_1 \le i_p \le j_1 \mbox{ and } i_2 \le j_p \le j_2.
    \]
    Thus the product $A_{i_1,j_1}A_{i_2, j_2}A_{i_pj_p}$ is supported on a standardly-embedded disk of up to six elements. As we are only assuming $n \ge 8$ and the factorization algorithm (\Cref{prop:factorize}) requires $n \ge 2s = 12$ strands to factor an element supported on six strands, we first rewrite $A_{i_1,j_1}A_{i_2, j_2} = A_{i_1 j_2} A_{i_2 j_1}$. If $i_1 < i_p < j_1 < i_2 < j_p < j_2$, we can then rewrite $A_{i_1 j_2} A_{i_p j_p}^{\pm}$ as the un-nested pair $A_{i_1 i_p} A_{j_p j_2}$ which is then un-nested with $A_{j_1 i_2}$. If $i_1 = i_p$, then the initial segment $A_{i_1 j_2} A_{i_1 j_p}$ can be replaced with $A_{j_2 j_p}$; the remaining cases where the total support is five strands can be handled analogously. Finally, if $A_{i_p j_p}^{\pm 1}$ intersects both $A_{i_1 j_1}$ and $A_{i_2 j_2}$ but the total number of strands is four, then this can be rewritten directly via the factorization algorithm without any need for initial re-writing.

    Altogether then, this process converts an arbitrary word $w \in PB_n \cap \ker(\phi_2)$ into a product of pairwise un-nested and commuting generators $A_{ij}$. Since $\phi_2(w) = 0$ by hypothesis, this implies that each $A_{ij}$ appears an even number of times; these can then be successively removed from $w$ by means of the factorization algorithm (\Cref{prop:factorize}).\\

    \para{The case $\boldmath{r\ge 3}$ odd} In broad outline, we proceed in the same way as in the case $r = 2$. Given $w \in \PB_n \cap \ker(\phi_r)$ as in \eqref{equation:w}, we rewrite the initial segment of $w$ as a set of elements with disjoint and small support. Whereas in the case $r = 2$ these elements were the generators $A_{ij}$ of $PB_n$, here we will use elements $A_{i;a,b,c}$ which we proceed to define.

    Let $a, b, c \in \Z/r\Z$ be given. Set $p = (r+1)/2$, and define
    \[
    A_{i;a,b,c} = A_{i,i+1}^{p(a+b-c)} A_{i,i+2}^{p(a-b+c)} A_{i+1,i+2}^{p(-a + b + c)}.
    \]
    Observe that
    \[
    \phi_r(A_{i;a,b,c}) = a e_i + b e_{i+1} + c e_{i+2}.
    \]
    Also note that $A_{i;a,b,c}$ and $A_{j;a',b',c'}$ have disjoint support whenever $\abs{i - j} \ge 3$.
    
    Divide the strands from $1$ to $n$ into groups of three; the last group will contain $3,4,$ or $5$ depending on the value of $d$, i.e. the remainder of $n \pmod 3$. We extend the definition of $A_{1+3k; a,b,c}$ to this last group by taking $A_{1+3k; a,b,c,d,e} = A_{1+3k,a,b,c}A_{3 + 3k,0,d,e}$ in the case $d = 2$ and similarly for $d=1$; to simplify notation we will tacitly understand that the last $A_{i;a,b,c}$ may be of this form. Suppose we have a partial factorization
    \[
    w = \gamma A_{i_p,j_p}^{\epsilon_p} w',
    \]
    where $\gamma$ is a product of elements of the form $A_{1+3k;a,b,c}$. Then $A_{i_p,j_p}^{\epsilon_p}$ intersects the support of at most two such elements, and altogether the product of these three elements is supported on a standardly-embedded disk with at most $6+d$ punctures. Pulling these to the front of $\gamma$, since we assume $n \ge n_0(r,d) = (6+d)r$, we apply the factorization algorithm (\Cref{prop:factorize}) and replace this with a product of up to two elements of the form $A_{1+3k;a,b,c}$ with the same $\phi_r$-value.

    After completing this process, we have factored $w = \gamma$ into a product of elements of the form $A_{1+3k; a,b,c}$. As $\phi_r(w) = \vec 0$ by hypothesis, it follows that each $\phi_r(A_{1+3k;a,b,c}) = \vec 0$ as well. Applying the factorization algorithm to each of these in turn, we express $w = \gamma$ as an element of $\Gamma_n^r$.

    \para{The case $\boldmath{r \ge 4}$ even} In this last case, we combine the methods of the previous two. Again, the objective is to factor initial segments of $w$ into disjoint elements of small support with the same value of $\phi_r$. Like in the case of $r$ odd, we partition the strands into groups of three and attempt to factor the initial segment into elements supported on these groups. But unlike this case, there is a parity phenomenon to keep track of, which will require us to {\em link} two such groups if the parity of $\phi_r$ on each is odd. 

    We define $A_{i;a,b,c}$ analogously as above, this time subject to the requirement that $a+b+c$ be even; under this hypothesis, each of the integers $\pm a \pm b \pm c$ is even and so can be divided by two in the exponent. As before, the last $A_{1+3k; a,b,c}$ may actually be supported on up to five strands. Given an initial segment $\gamma \in B_n$ of $w$, we say that a group of integers $1+3k, 2+3k, 3+ 3k$ is {\em even} if the sum of the coefficients of $\phi_r(\gamma)$ on $e_{1+3k}, e_{2+3k}, e_{3 + 3k}$ is even, and {\em odd} otherwise. Observe that there is always an even number of odd groups, since each $A_{ij}^{\pm 1}$ changes the parity of either zero or two groups.

    We now describe the structure of the initial segment we will construct. We will express $w = \gamma w'$ where $\gamma$ is a product of $A_{1+3k;a,b,c}$ over all even groups, along with products of the form 
    \[
    A_{1 + 3k; a,b,c} A_{1 + 3k';a',b',c'} A_{i,j}, 
    \]
    where the groups starting at $1+3k$ and $1+3k'$ are odd, and where the first group contains $i$ and the second contains $j$. We moreover impose the condition that there are no odd groups in between $1+3k$ and $1+3k'$. In this way, the structure of the supports mimics that in the case $r = 2$: they are disjoint, un-nested, and supported on standardly-embedded disks. 

    Given a partial factorization
    \[
    w = \gamma A_{i_p j_p}^{\epsilon_p} w'
    \]
    of this form, we consider the various possibilities for how the support of $A_{i_p j_p}^{\epsilon_p}$ intersects the supports of the elements in $\gamma$. This exactly mirrors the analysis carried out in the case $r = 2$, but this time we apply the factorization algorithm to elements supported on up to $12+d$ strands, in the case where we need to convert between nested and un-nested factorizations on two pairs of odd groups, one of which contains the exceptional group of $3+d$ elements.
\end{proof}

\begin{theorem}
    \label{theorem:gammakernel}
    Let $n_0(r,d)$ and $d$ be given as in \Cref{lemma:factorpure}. Then for $n \ge n_0(r,d)$, there is an equality
    \[
    \ker(\phi_r) = \Gamma_n^r,
    \]
    i.e. $\ker(\phi_r)$ is generated by the finite set of basic $(r+1)^{st}$ twists.
\end{theorem}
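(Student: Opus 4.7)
The plan is to reduce the theorem to the pure-braid case, already handled by \Cref{lemma:factorpure}, via a lifting argument through the canonical surjection $B_n \twoheadrightarrow S_n$. The easy containment $\Gamma_n^r \le \ker(\phi_r)$ holds because each basic twist $\Sigma_{a;r}$ admits a braid representative as a single overcrossing of one strand over exactly $r$ others; in this diagram the virtual undercrossing procedure requires no borrowing, so $v(\Sigma_{a;r}) \equiv 0 \pmod r$ and $\Sigma_{a;r} \in \ker(\phi_r)$.

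For the reverse inclusion, given $\beta \in \ker(\phi_r)$, my plan is first to produce some $\gamma \in \Gamma_n^r$ with the same permutation image $\bar\gamma = \bar\beta \in S_n$. Granting this, $\gamma^{-1}\beta$ is pure and, since $\ker(\phi_r)$ is a subgroup containing $\gamma$, also lies in $\ker(\phi_r)$; \Cref{lemma:factorpure} then puts $\gamma^{-1}\beta$ into $\Gamma_n^r$, and hence $\beta = \gamma \cdot (\gamma^{-1}\beta) \in \Gamma_n^r$. The problem thus reduces to matching the images of $\Gamma_n^r$ and $\ker(\phi_r)$ in $S_n$.

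To compute these images, I would use \Cref{lemma:phionpb} to identify $\phi_r(PB_n) \subset (\Z/r\Z)^n$ as the subgroup generated by $\{e_i + e_j\}$: this equals all of $(\Z/r\Z)^n$ when $r$ is odd (since $2$ is then invertible) and the index-$2$ parity-even subgroup when $r$ is even. The $B_n$-action on $(\Z/r\Z)^n$ permutes coordinates, preserves $\phi_r(PB_n)$, and acts trivially on the quotient, so one obtains a well-defined homomorphism $\bar\phi_r \colon S_n \to (\Z/r\Z)^n/\phi_r(PB_n)$ sending $s_i \mapsto [e_{i+1}]$, which is the zero map for $r$ odd and the sign character for $r$ even. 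The image of $\ker(\phi_r)$ in $S_n$ is therefore $S_n$ or $A_n$ according as $r$ is odd or even. The image of $\Gamma_n^r$, on the other hand, is generated by the consecutive $(r+1)$-cycles $\bar\Sigma_{a;r} = (a, a+1, \ldots, a+r)$, and a standard permutation-theory argument shows these generate $S_n$ when $r+1$ is even and $A_n$ when $r+1$ is odd. Matching by parity, the two images coincide, and the lifting goes through.

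The main technical bulk of the argument is \Cref{lemma:factorpure} itself; the remaining reduction is essentially formal. The only subtlety to track is the parity bookkeeping in the computation of the $S_n$-images, together with the brief verification that consecutive $(r+1)$-cycles generate the full alternating or symmetric group on $n$ letters, which is comfortably implied by the hypothesis $n \ge n_0(r,d)$.
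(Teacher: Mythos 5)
Your proof takes essentially the same route as the paper's: establish $\Gamma_n^r \le \ker(\phi_r)$ directly, reduce the reverse containment to the pure-braid case via \Cref{lemma:factorpure} by matching images in $S_n$, and compute both $S_n$-images via a parity argument (the paper uses an explicit $\sgn + s$ homomorphism on $S_n \ltimes (\Z/r\Z)^n$ where you package the same computation through the quotient $(\Z/r\Z)^n/\phi_r(PB_n)$, but these are the same observation). The argument is correct.
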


\begin{proof}
    The bulk of the work has been carried out above in \Cref{lemma:factorpure}, which establishes the containment
    \[
    \PB_n \cap \ker(\phi_r) \le \Gamma_n^r
    \]
    in the range $n \ge n_0(r,d)$. Conversely, it is easy to verify that
    \[
    \phi_r(\Sigma_{i;r}) = 0,
    \]
    so that
    \[
    \Gamma_n^r \le \ker(\phi_r).
    \]
    It remains only to show that the images of $\ker(\phi_r)$ and $\Gamma_n^r$ in $S_n$ coincide; denote these subgroups of $S_n$ by $\bar{\ker(\phi_r)}$ and $\bar{\Gamma_n^r}$, respectively. 

    The basic $(r+1)^{st}$ twists that generate $\Gamma_n^r$ are sent to the $r+1$-cycles 
    \[(1\dots r+1), (2 \dots r+2),\dots, (n-r-1\dots n)
    \]
    in $S_n$. Thus
    \[
    (1\ r+1\ r+2) = (1 \dots r+1)^{-1} (2\dots r+2) \in \bar{\Gamma_n^r},
    \]
    and hence also
    \[
    (1\ 2\ 3) = (2\dots r+2)^2 (1\ r+1\ r+2) (2 \dots r+2)^{-2} \in \bar{\Gamma_n^r}.
    \]
    As also $\Gamma_n^r$ contains the element $\Sigma_{1;n-1} = \sigma_1 \dots \sigma_{n-1}$, the image $\bar{\Gamma_n^r}$ contains the cyclic  permutation $(1 \dots n)$. Conjugating $(1\ 2\ 3)$ by this, it follows that $\bar{\Gamma_n^r}$ contains all $3$-cycles of the form $(i\ i+1\ i+2)$. This is well-known to generate the alternating group $A_n$. We conclude that
    \[
    A_n \le \bar{\Gamma_n^r}
    \]
    for all $n, r$. For $r$ odd, the $r+1$-cycles are odd permutations and are even otherwise, from which it follows that 
    \[
    \bar{\Gamma_n^r} = \begin{cases}
        S_n & r \mbox{ odd}\\
        A_n & r \mbox{ even}.
    \end{cases}
    \]

It remains only to show that $\bar{\ker(\phi_r)} \le A_n$ when $r$ is even. To see this, we recall that $\phi_r$ can be viewed as the homomorphism 
\[
\phi_r: B_n \to S_n \ltimes (\Z/r\Z)^n
\]
which sends $\sigma_i$ to the pair $((i\ i+1), e_{i+1})$. Let $\sgn: S_n \to \Z/2\Z$ denote the sign homomorphism, and let $s: (\Z/r\Z)^n \to \Z/2\Z$ be the reduction mod $2$ of the sum-of-coefficients map. Then
\[
\sgn + s: S_n \ltimes (\Z/r\Z)^n \to \Z/2\Z
\]
is a surjective homomorphism, and the composition $(\sgn+s) \circ \phi_r$ is identically zero (being zero on each generator of $B_n$ by above), from which it follows that $\bar{\ker(\phi_r)} \le A_n$ as was to be shown.
\end{proof}

\begin{proof}[Proof of \Cref{mainthm:monodromy}]
    \Cref{lemma:monodromycontain} establishes the containment
    \[
    B_n[\kappa] \le \ker(\phi_r),
    \]
    \Cref{lemma:monodromygens} shows that
    \[
    \Gamma_n^r \le B_n[\kappa],
    \]
    and \Cref{theorem:gammakernel} shows the equality
    \[
    \Gamma_n^r = \ker(\phi_r)
    \]
    in the range $n \ge n_0(r,d)$.
\end{proof}

\bibliographystyle{alpha}
\bibliography{bib}

\end{document}